\documentclass{article}

\usepackage[letterpaper,top=2cm,bottom=3cm,left=3cm,right=3cm,marginparwidth=2.2cm]{geometry}

\usepackage{amsmath,amsfonts,amssymb,amsthm}
\usepackage{color}
\usepackage{graphicx}
\newcommand{\eps}{\varepsilon}
\usepackage[dvipsnames]{xcolor}
\usepackage{authblk}
\usepackage[english]{babel}
\usepackage{hyperref}

\newtheorem{theorem}{Theorem}[section]
\newtheorem{lemma}[theorem]{Lemma}

\newtheorem{remark}{Remark}

\title{The Lingering Phenomenon and Pattern Formation in a Nonlocal Population Model with Cognitive Map
}
\author[1]{Kyung-Han Choi\thanks{Corresponding author. Email: kyunghan@ualberta.ca}}
\author[1]{Thomas Hillen}
\affil[1]{Department of Mathematical and Statistical Sciences, 
University of Alberta, Edmonton, Alberta T6G 2G1, Canada}

\date{}
\begin{document}
\maketitle
\begin{abstract}
The rates at which individuals memorize and forget environmental information strongly influence their movement paths and long-term space use. 
To understand how these cognitive time scales shape population-level patterns, we propose and analyze a nonlocal population model with a cognitive map. 
The population density moves by a Fokker--Planck type diffusion driven by a cognitive map that stores a habitat quality information nonlocally. 
The map is updated through local presence with learning and forgetting rates, and we consider both truncated and normalized perception kernels.

We first study the movement-only system without growth. 
We show that finite perceptual range generates spatial heterogeneity in the cognitive map even in nearly homogeneous habitats, and we prove a lingering phenomenon on unimodal landscapes: for the fixed high learning rate, the peak density near the best location is maximized at an intermediate forgetting rate.

We then couple cognitive diffusion to logistic growth. 
We establish local well-posedness and persistence by proving instability of the extinction equilibrium and the existence of a positive steady state, with uniqueness under an explicit condition on the motility function.  
Numerical simulations show that lingering persists under logistic growth and reveal a trade-off between the lingering and total population size, since near the strongest-lingering regime the total mass can fall below the total resource, in contrast to classical random diffusive--logistic models.
\end{abstract}

\begin{flushleft}
\textbf{Keywords:} Lingering phenomenon, Nonlocal perception, 
Cognitive map, Fokker-Planck type diffusion, non-local PDEs
\end{flushleft}

\renewcommand{\thefootnote}{\fnsymbol{footnote}}
\tableofcontents

\section{Introduction}

The rates at which individuals memorize and forget new environmental information are closely linked to their spatial navigation and resulting patterns of space use. In neuroscience, studies have revealed a relationship between hippocampal neurogenesis and roaming patterns via \textit{roaming entropy}, a measure of active territory coverage \cite{Freund2013,Heller2020}. A recent study found that patients with Alzheimer’s disease show significantly different roaming entropy compared with healthy older adults \cite{Ghosh2022}. In particular, \cite{BRUNEC2023105360} links spatial navigation and the formation of cognitive maps through roaming entropy, highlighting the importance of accounting for both individual memory variability and the structure of the environment.

Although neuroscientific insights into learning, forgetting, and roaming have rarely been extended to population-level ecology, recent mathematical models formalize spatial memory using cognitive map frameworks \cite{Painter2024review,Potts2016Royal,Potts2016,Potts2019} and examine its consequences for home range formation and aggregation \cite{Liu2025,Shi2021,Xue2024}. In addition, \cite{Wang2023} outlines open modeling and analytical challenges for cognitive movement models. In many of these studies, cognitive movement is represented as diffusion and advection with a nonlocal advection term, where a nonlocal memory field biases movement. This nonlocality is thought to be biologically natural because organisms sense their surroundings beyond a single point through vision or smell, so models introduce various types of spatial kernels with a finite perceptual radius, such as the top-hat kernel and bump functions. As a result, much of the literature investigates how the kernel, especially its radius and shape, affects aggregation, home range size, and spatial patterning \cite{Fagan2017,Giunta2024,Liu2025,Painter2024chase}. Those models have been studied under periodic boundary condition on the domain, which makes it difficult to fully capture how habitat boundaries influence the information animals perceive.

We hypothesize that memory-based navigation with nonlocal perception can generate heterogeneous patterns of space use and residence times even in nearly homogeneous landscapes. For example, even when food availability is uniform, limited visual range and habitat structure, such as obstacles or cover, create heterogeneity in the cognitive map, consistent with evidence that environmental geometry shapes cognitive maps \cite{BRUNEC2023105360}.  On such a heterogeneous cognitive map, low-retention environmental cues are forgotten before they can guide movement, shifting use toward strongly remembered locations; therefore the rates of learning and forgetting are critical. These ideas lead to a central question about how learning and forgetting rates shape long-term population distributions in ecological systems. It is expected that insufficient memory yields diffusion-like wandering and poor patch use, whereas excessive memory yields maladaptive persistence to stale information. We also expect that there exists a moderate balance that maximizes residency near favorable regions. We refer to this effect as \emph{lingering}.
\begin{figure}[ht]
    \centering
    \includegraphics[width=0.7\linewidth]{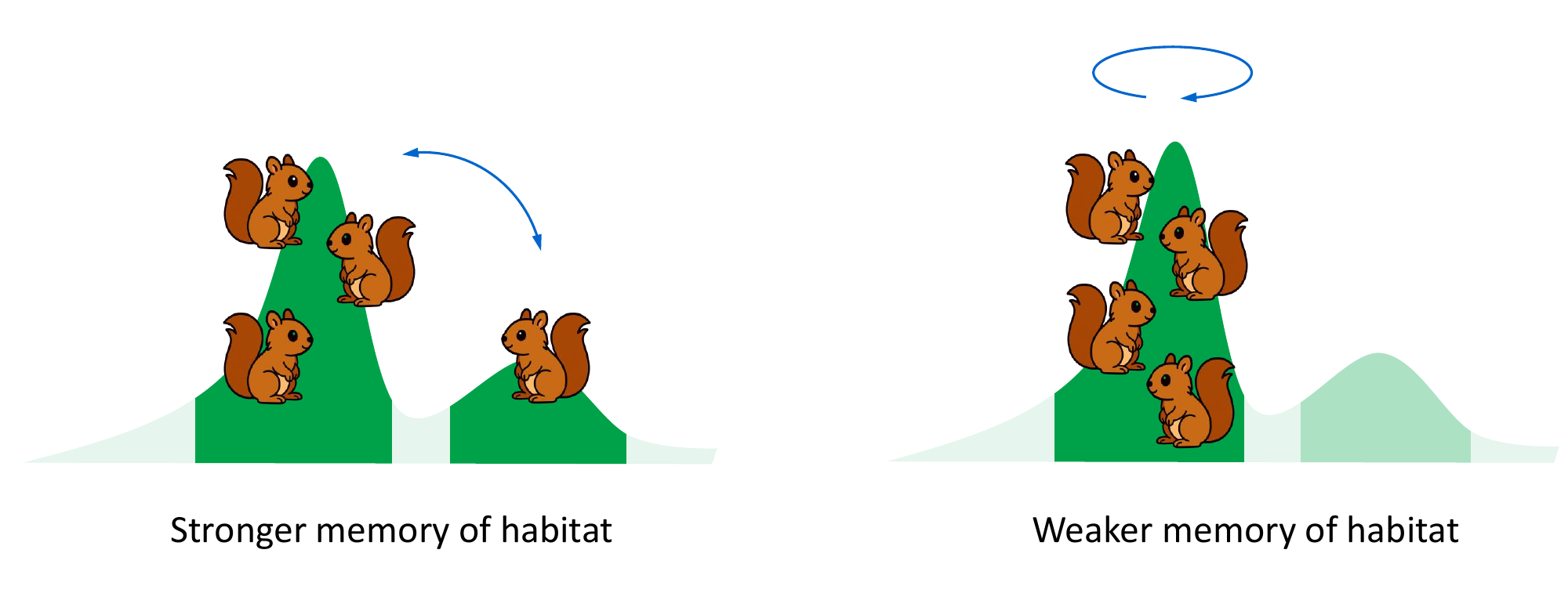}
    \caption{Schematic illustration of lingering in a one-dimensional habitat. The green curve represents perceived habitat quality (a cognitive map), with darker shading indicating locations that are more strongly remembered. Squirrels depict where individuals tend to reside. In the left panel, animals occupy two high-memory-retention regions and possibly move between the peaks. In the right panel, individuals linger in a single strongly remembered core region, leaving other peak areas underused.}
    \label{fig:lingering_squirrel}
\end{figure}

\subsection{The Mathematical Model}

We formalize these ideas by coupling population density $u(x,t)$ on a bounded domain $\Omega \subset \mathbb{R}^n$ with a cognitive map $m(x,t)$ that encodes spatial memory of habitat density $s(x)$. We work with a dimensionless formulation. The quantity $u(x,t)$ denotes a scaled population density and $s(x)$ represents the corresponding scaled habitat quality or carrying capacity, so that $(s(x)-u)u$ has the form of a logistic growth term with spatially varying capacity. The nonlocal quantity $\bar s(x)$ represents the perceived habitat quality obtained by spatially averaging $s$ around $x$. Thus $s$, $\bar s$ and $m$ are all dimensionless and measured on the same scale. The movement and memory dynamics are
\begin{equation}\label{eqn:main}
\left\{
\begin{aligned}
&\partial_t u  = \Delta\big(\gamma(m) u\big)+ (s(x)-u)\,u
&& \text{in } \Omega\times(0,T], \\[4pt]
&\partial_t m = \big(\alpha\,\bar s(x)-m\big)\,u - \mu\,m
&& \text{in } \Omega\times(0,T],\\
&\nabla\big(\gamma(m) u\big)\cdot \mathbf{n} = 0
&& \text{on } \partial\Omega\times(0,T],\\
&u(x,0) = u_0(x), \quad m(x,0) = m_0(x)
&& \text{in } \Omega,
\end{aligned}
\right.
\end{equation}
where $\gamma(m)>0$ renders memory-dependent mobility, $\alpha>0$ represents the rate at which individuals memorize (or learning strength), and $\mu>0$ is the memory decay rate, that is, how quickly they forget their memory.
The update term $(\alpha\bar{s}-m)u$ implements ``learn--where--you--are’’: memory is reinforced (or down–weighted) proportionally to local presence $u$, depending on the perceived favorability $\alpha\bar{s}$. The memory equation for $m$ provides a mechanistic way to construct a cognitive map. The similar dynamics for cognitive map is demonstrated in literature \cite{Liu2025, Wang2023}. We define the nonlocal quantity
\begin{equation}\label{bars}
\bar{s}(x)=(K*s)(x)=\int_{B_R(x)}K(x,y)\,s(y)\chi_{\overline{\Omega}}(y)\,dy 
\end{equation}
with
\begin{equation}\label{defkernel}K(x,y) = \left\{\begin{aligned}
    &J_R(x-y) \text{ or }\frac{J_R(x-y)}{Z_R(x)} &&\text{ if } x-y\in B_R(0),\\
    &\qquad\qquad 0&& \text{ otherwise, }
\end{aligned}\right. 
\end{equation}
where $Z_R(x)=\int_{B_R(x)} J_R(x-y)\chi_{\bar\Omega}(y)dy$ and $J_R$ encodes a perception kernel whose support is a ball $B_R(0)$ centered at $0$ with radius $R$. $\chi_{\bar\Omega}$ is a characteristic function, that is, it is equal to 1 in $\overline\Omega$ and otherwise it is 0, and we illustrate its use in Figure \ref{fig:weirdOmega}. We will consider a normalized and a not-normalized version of the integral operator in (\ref{defkernel}). 

\begin{figure}[ht]
    \centering
    \includegraphics[width=0.4\linewidth]{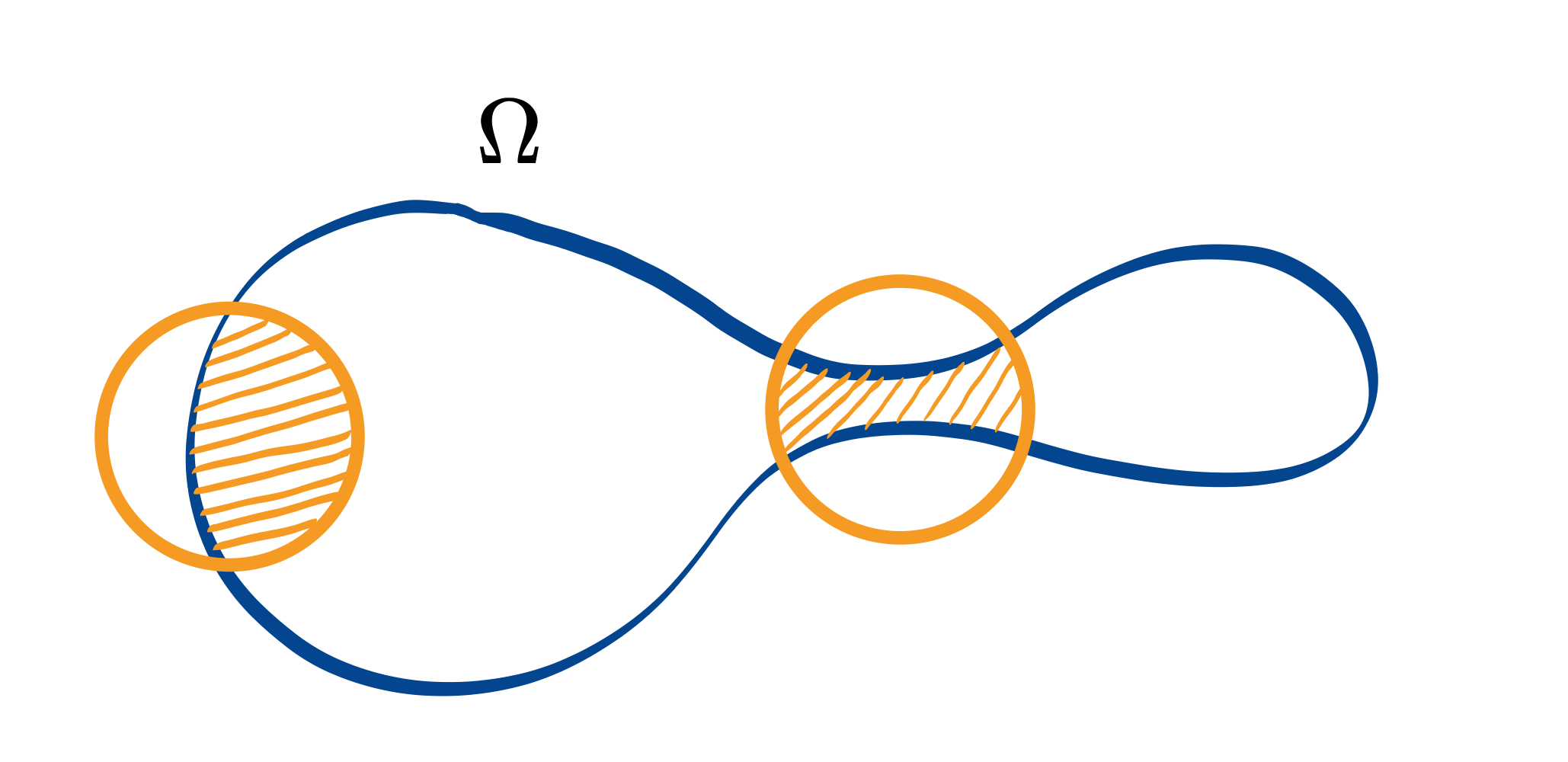}
    \caption{The blue curve represents the boundary of the habitat $\Omega$ occupied by the population. Dashed regions represent the nonlocal neighborhoods perceived under a finite perceptual radius at different locations.}
    \label{fig:weirdOmega}
\end{figure}

We assume that $\gamma$ is a positive, decreasing function of $m$. In other words, individuals move more slowly in regions where the memory variable $m$ is high, tending to remain longer in locations they remember as favorable. Formally expanding the diffusion term generates a natural advection term induced by gradients of $m$, and since $\gamma'<0$, this advection describes attraction toward regions of higher memory. The expansion of the diffusion term is \begin{equation*}
    \Delta(\gamma(m)u) = \nabla\cdot\left(\gamma(m)\nabla u + \gamma'(m)u\nabla m\right).
\end{equation*}
This type of diffusion operator, $\Delta(\gamma(m)u)$, is often referred to as a Fokker--Planck--type diffusion or a Chapman diffusion law and has been extensively used to model density-dependent dispersal on the heterogeneous media in biology and ecology, where the probability of leaving a location is determined by the local state (for example, the local density) rather than by its gradient \cite{Chang2025, Cho2013, Kim2023chemotaxis, Kim2014, Tang2023}. In our setting, this motility depends on the memory field $m$ rather than directly on the density $s$, but the underlying diffusion structure is of the same Fokker--Planck type.

It is noteworthy to mention that nonlocality enters only through the second equation in \eqref{eqn:main}. Much of the existing literature introduces nonlocal space use via an advection term, which has typically restricted the analysis to periodic boundary conditions \cite{Liu2025, Shi2021,Xue2024}. In contrast, our use of no-flux boundary conditions together with the domain-restricted kernel $J_R(x-y)\chi_{\bar \Omega} (y) $ allows us to clarify how the geometric structure of a bounded domain shapes perceived habitat information. As illustrated in Figure~\ref{fig:weirdOmega}, the finite perceptual radius generates nonlocal neighborhoods near the boundary that differ from those in the interior, so the geometry of $\Omega$ induces diverse types of spatial heterogeneity in the perceived environment.

On the other hand, we also consider the normalized kernel $K(x,y)=\frac{J_R(x-y)\chi_{\bar \Omega} (y)}{Z_R(x)}$ to model situations in which individuals are only able to sense a limited amount of environmental cues. In this case, the normalization compensates for boundary effects, and the influence of the domain geometry on the perceived habitat information is suppressed.

\subsection{Paper outline}
In Section~\ref{sec:lingering from dispersal} we analyze a cognitive movement model without population growth and study spatial patterns arising from nonlocal perception. 
We identify the lingering phenomenon on heterogeneous cognitive maps using both mathematical analysis and numerical simulations.
In Section~\ref{sec:population dynamics} we turn to the full model \eqref{eqn:main} with logistic growth, establish its well-posedness, and investigate its long-time behavior. 
We also show numerically that lingering persists despite logistic-type growth and examine how the learning and forgetting rates shape not only the peak density at favorable locations but also the total population size. 
Finally, in Section~\ref{sec:discussion} we summarize our findings, discuss the limitations of the study, and outline directions for future work.

\section{Lingering phenomenon on heterogeneous cognitive map}\label{sec:lingering from dispersal}

In this section, we investigate how spatial heterogeneity in the cognitive map, generated by nonlocal perception, gives rise to a lingering phenomenon in space use. In Section~2.1, we construct heterogeneous cognitive maps \(\bar s(x)\) via nonlocal perception and illustrate their effects numerically. In Section~2.2, we then develop a mathematical theory that characterizes the lingering phenomenon on heterogeneous cognitive maps. To isolate the effect of the population growth, we consider movement driven solely by context-dependent diffusion. We consider the cognitive diffusion model:
\begin{equation}\label{eqn: dispersalonly}
\left\{
\begin{aligned}
&\partial_t u  = \Delta\big(\gamma(m) u\big)
&& \text{in } \Omega\times(0,T], \\[4pt]
&\partial_t m = \big(\alpha\,\bar s(x)-m\big)\,u - \mu\,m
&& \text{in } \Omega\times(0,T]\\
&\nabla \big(\gamma(m) u\big)\cdot \mathbf{n} = 0
&& \text{on } \partial\Omega\times(0,T],
\end{aligned}
\right.
\end{equation} where \(\gamma:[0,\infty)\to(0,\infty)\) is a positive, strictly decreasing motility function. The perceived quantity \(\bar s(x)\) is defined on \(\Omega\) as in~\eqref{defkernel} and is independent of time in this section. Throughout the simulations below, we choose
\begin{equation}\label{gamma}
\gamma(z) = \frac{1}{(1+z)^2}, \qquad z \ge 0.
\end{equation}

\subsection{Heterogeneity by nonlocal perception}
\hspace{.1cm}
This subsection demonstrates that non-normalized kernels introduce artificial heterogeneity near the boundary, even in a homogeneous environment, whereas normalized kernels eliminate this effect.
This is because the range in which the species perceives $s(y)$ gets smaller due to the lack of information outside of the domain. 

Let us consider a homogeneous environment, that is, $s(x)\equiv s$ for all $x\in\overline{\Omega}$.
Let $\Omega\subset\mathbb{R}^d$ be bounded and $J_R\ge 0$ be an integrable kernel with compact support contained in $B_R(0)$ and 
$I_R:=\int_{B_R(0)} J_R(r)\,dr<\infty$. For $x\in\overline\Omega$, 
\[ \bar s(x) = \int_{B_R(x)}J_R(x-y)s\chi_{\overline{\Omega}}(y)dy=sZ_R(x),\] where $Z_R(x):=\int_{B_R(x)} J_R(x-y)\,\chi_{\overline\Omega}(y)\,dy$. In this case, the non-normalized kernel corresponds to collecting \emph{total amount of information} within the perceptual radius.
Since $0\le\chi_{\overline\Omega}\le1$ and $B_R(x)\cap\Omega\subset B_R(x)$,
\[
Z_R(x)=\int_{B_R(x)\cap\overline\Omega} J_R(x-y)\,dy
\le\int_{B_R(x)} J_R(x-y)\,dy=I_R.
\] 
Strict inequality holds if $B_R(x)\setminus\overline\Omega$ has positive measure and $J_R\not\equiv0$ there. Therefore, boundary truncation directly lowers the perceived quantity.

Assume that $\Omega=(-5,5)$. We fix the memory-related parameters $\alpha=2$ and $\mu=0$, so we look at the case when species utilize the perceived quantity fully to move. The qualitative effect of boundary truncation is similar when $\mu>0$. We use a smooth bump kernel for illustration: 
\begin{equation}\label{bump}
    J_R(r) = \left\{\begin{aligned}
        &e^{-1/(1-r^2/R^2)} && \text{ if } |r|<R,\\
        &\,\,0 && \text{ otherwise.}
    \end{aligned}\right.
\end{equation}
We show the kernel $J_R$ for several values of $R$ in Figure \ref{fig: bars and u with nonscaled} on the left. 
Qualitatively similar truncation arises for other compactly supported kernels such as top-hat kernels.
\begin{figure}[ht]
    \centering
    \includegraphics[width=0.33\linewidth]{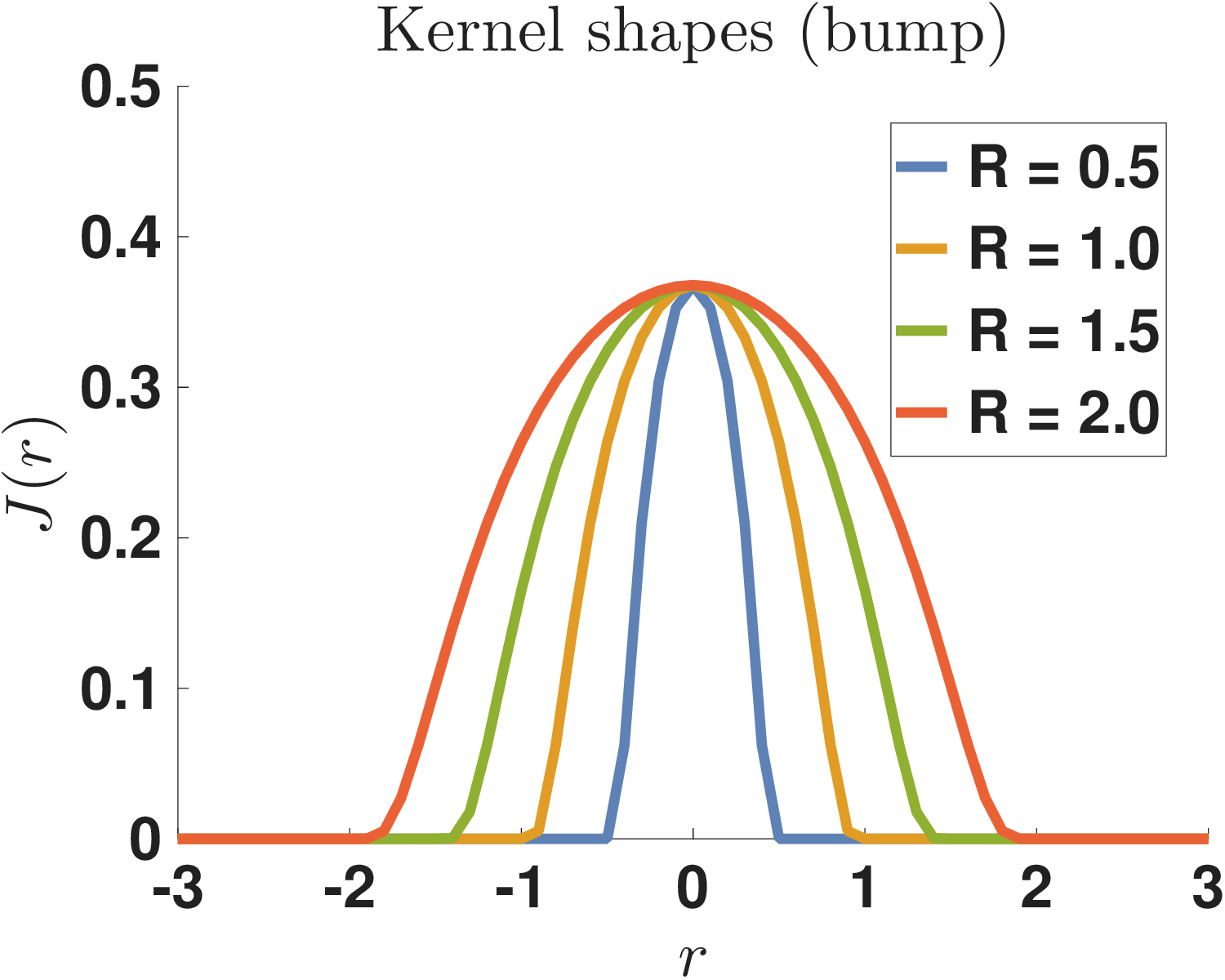}\includegraphics[width=0.33\linewidth]{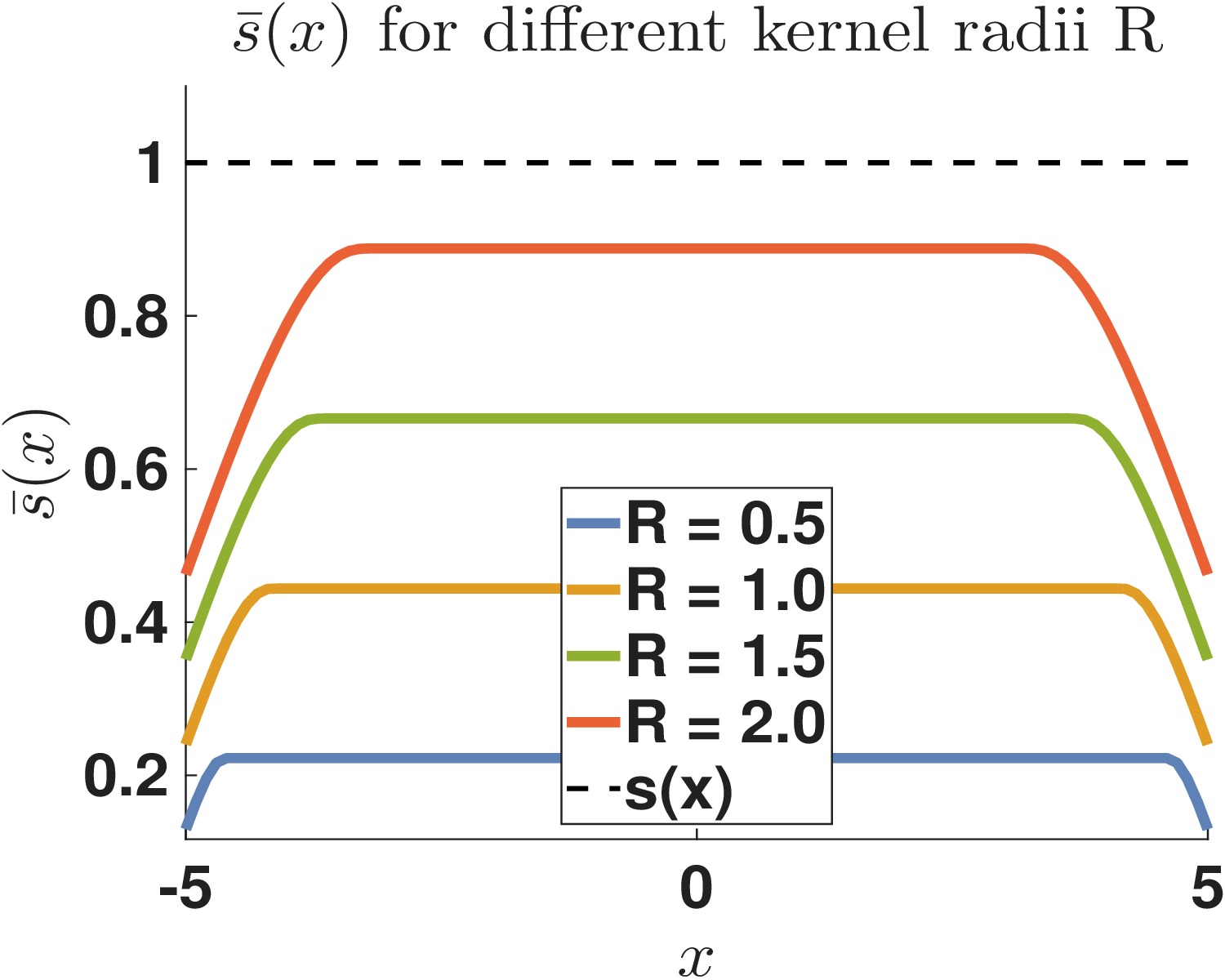}~
    \includegraphics[width=0.33\linewidth]{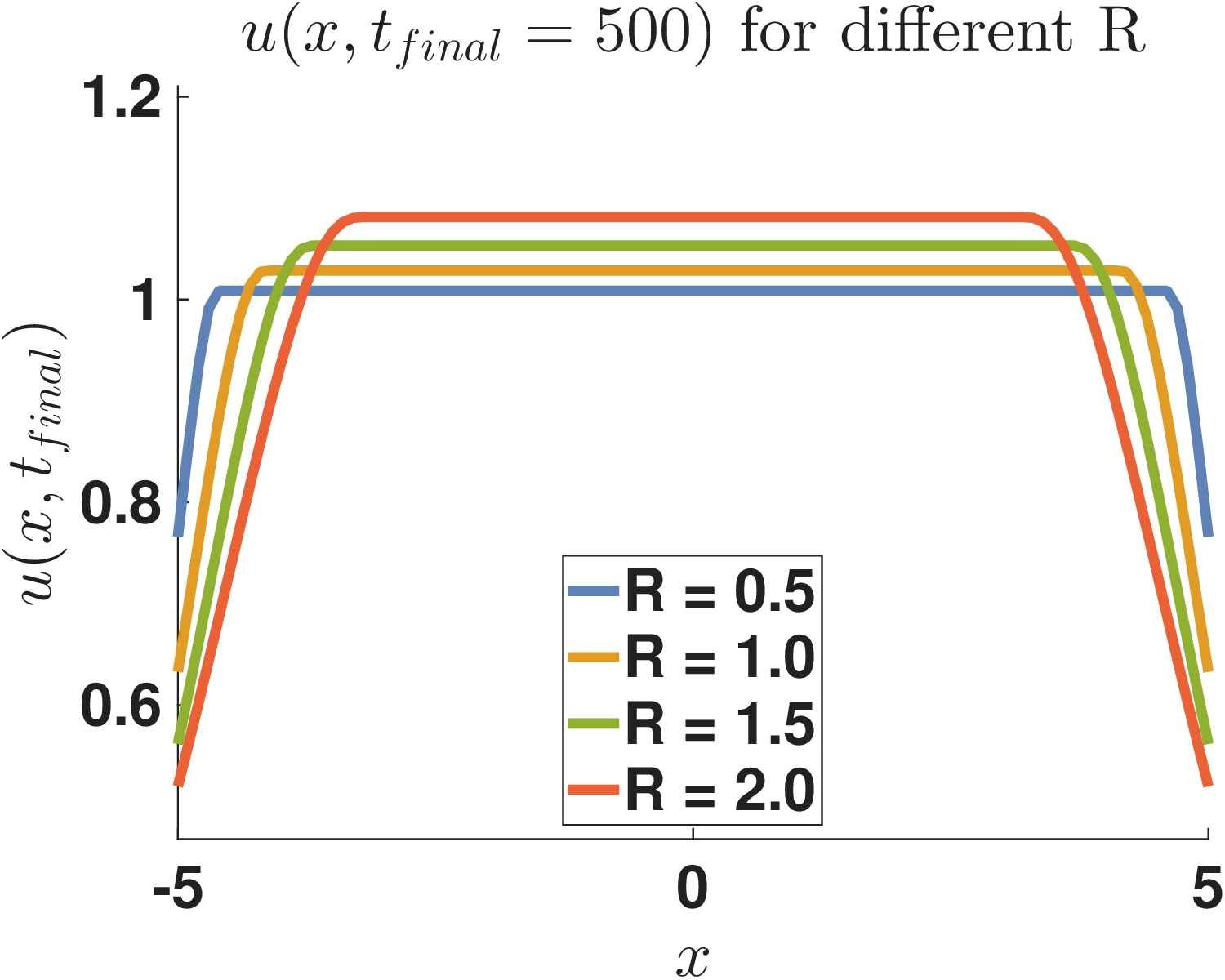}
    \caption{ The left first figure is different kernel shapes of $J_R$ for radius $R=0.5,1,1.5$ and 2. The second plot illustrates $\bar s$ corresponding to kernels in the left figure. The black dotted line is $s(x)\equiv 1$. $\bar s(x)\approx s$ in the interior but decreases near the boundary, more strongly for larger $R$. The figure on the right depicts the numerical solution $u(x,t_{\text{final}})$ at a large time $t=500$. As $R$ increases, $u(x,t_{\text{final}})$ becomes more concentrated in the middle due to the artificial heterogeneity.}
    \label{fig: bars and u with nonscaled}
\end{figure} 
The non-normalized kernel $J_R$ allows individuals to gain different quantities of information near the boundary, which creates spatial heterogeneity on their cognitive map $m$. Due to the heterogeneity of the cognitive map, we observe nonuniform solutions at large time. (The third figure of Figure \ref{fig: bars and u with nonscaled}). We also observed that the population $u(x,500)$ is more focused in the middle of the domain as $R$ gets large. It is expected that the nonlocal quantity $\bar s$ gets more complicated when the boundary of the domain is more complex.

Now we explore the case of normalized kernels as shown in Figure \ref{fig:hetero s} on the right.
Recall the normalized nonlocal environment cue:
\begin{equation*}
    \bar s(x) = \int_{B_R(x)}\frac{ J_R(x-y)}{Z_R(x)}s(y)\chi_{\overline\Omega}(y)dy \text{ and } Z_R(x)=\int_{B_R(x) } J_R(x-y)\chi_{\overline\Omega}(y) dy.
\end{equation*} 
If $s(x)\equiv s$ for all $x\in \overline{\Omega}$, $\bar s(x)\equiv s$ for all $x\in\overline\Omega$. Indeed,
\[
\bar s(x)=\frac{s}{Z_R(x)}\int_{B_R(x)} J_R(x-y)\,\chi_\Omega(y)\,dy
=\frac{s}{Z_R(x)}\,Z_R(x)=s.
\] In this case, the normalized kernel corresponds to sensing \emph{average concentration} in the neighborhood.

The effect of nonlocal perception with the normalized kernel is prominent on the heterogeneous environment $s$. See Figure \ref{fig:hetero s}.

\begin{figure}[ht]
    \centering
    \includegraphics[width=0.3333\linewidth]{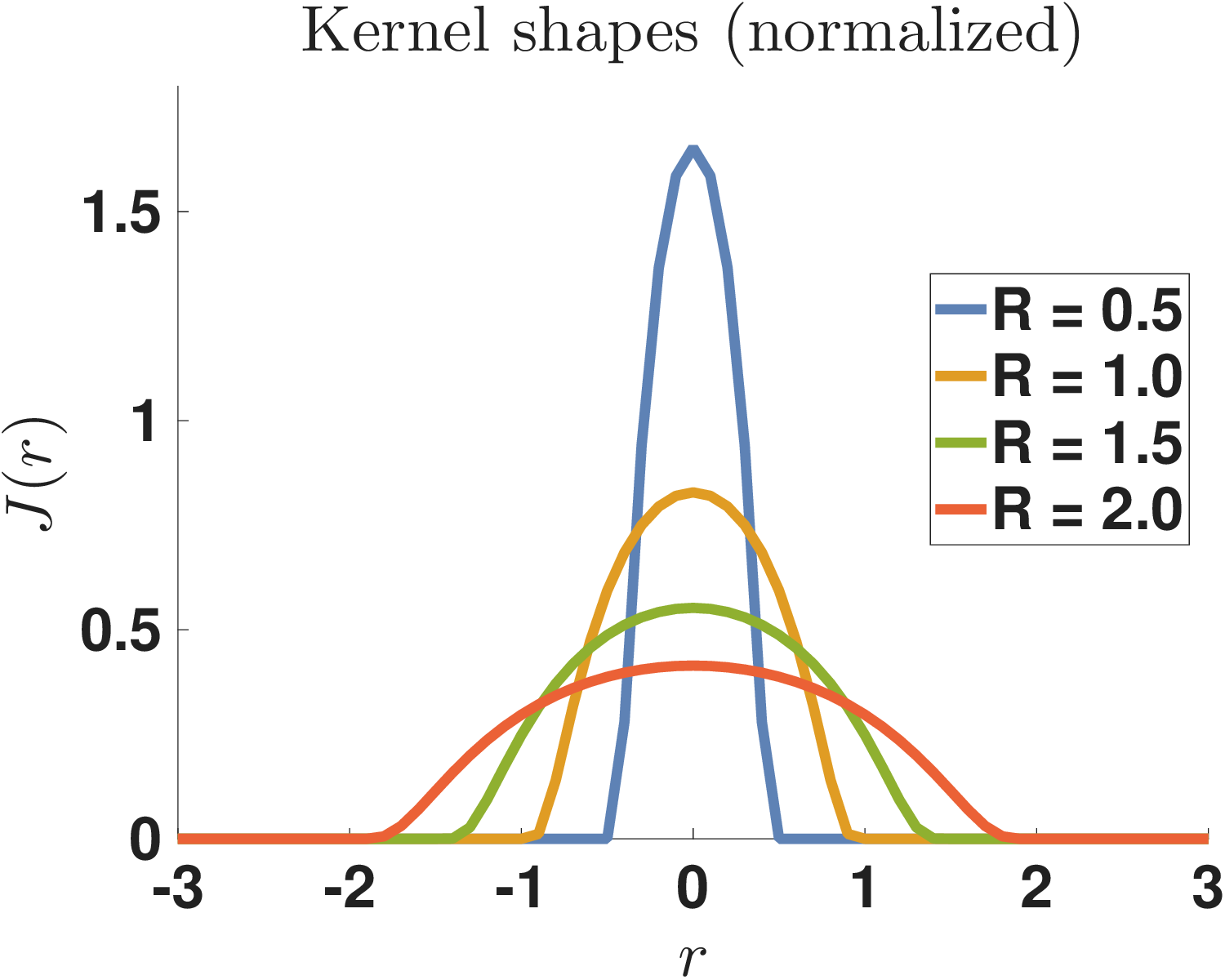}\includegraphics[width=0.3333\linewidth]{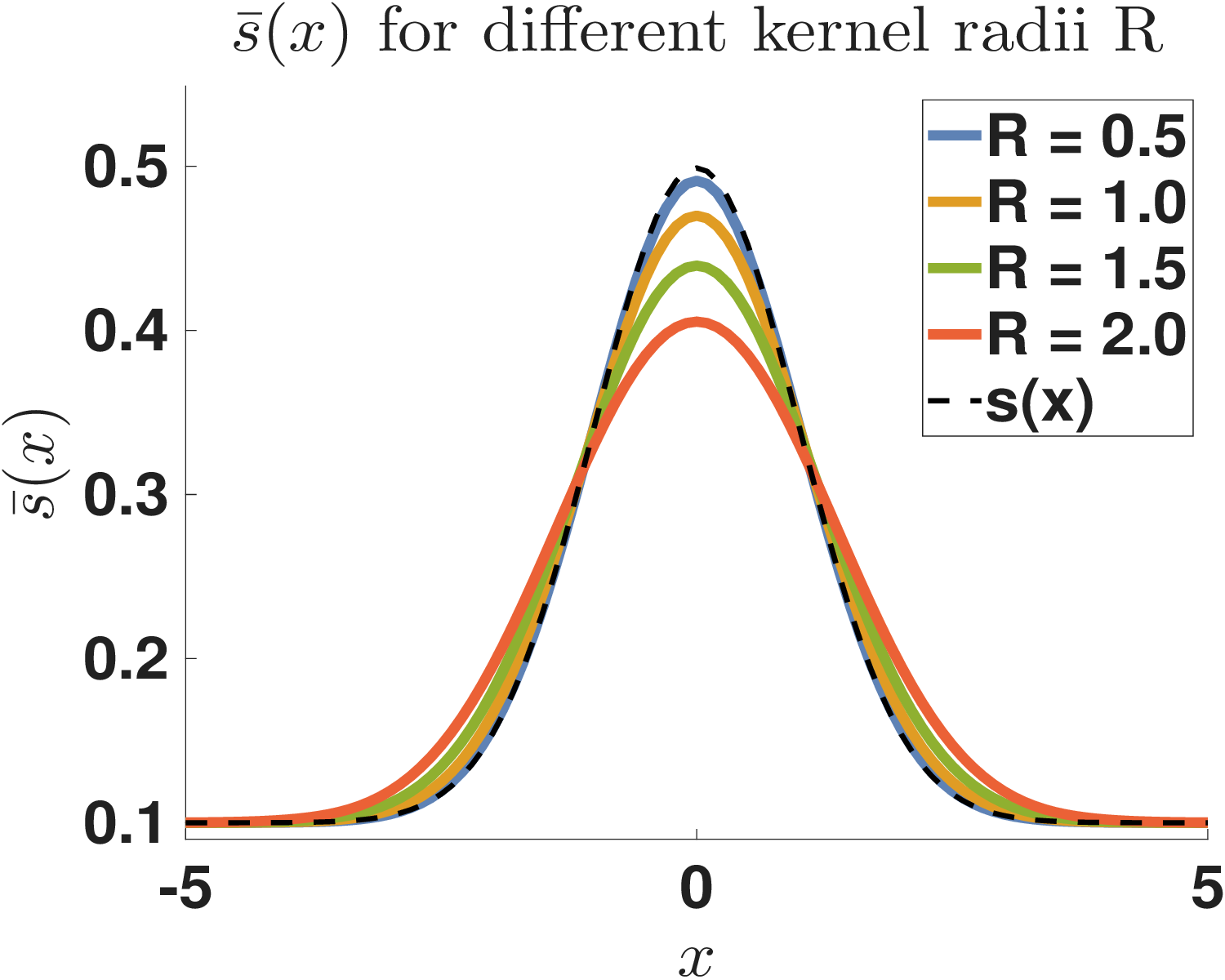}~\includegraphics[width=0.3333\linewidth]{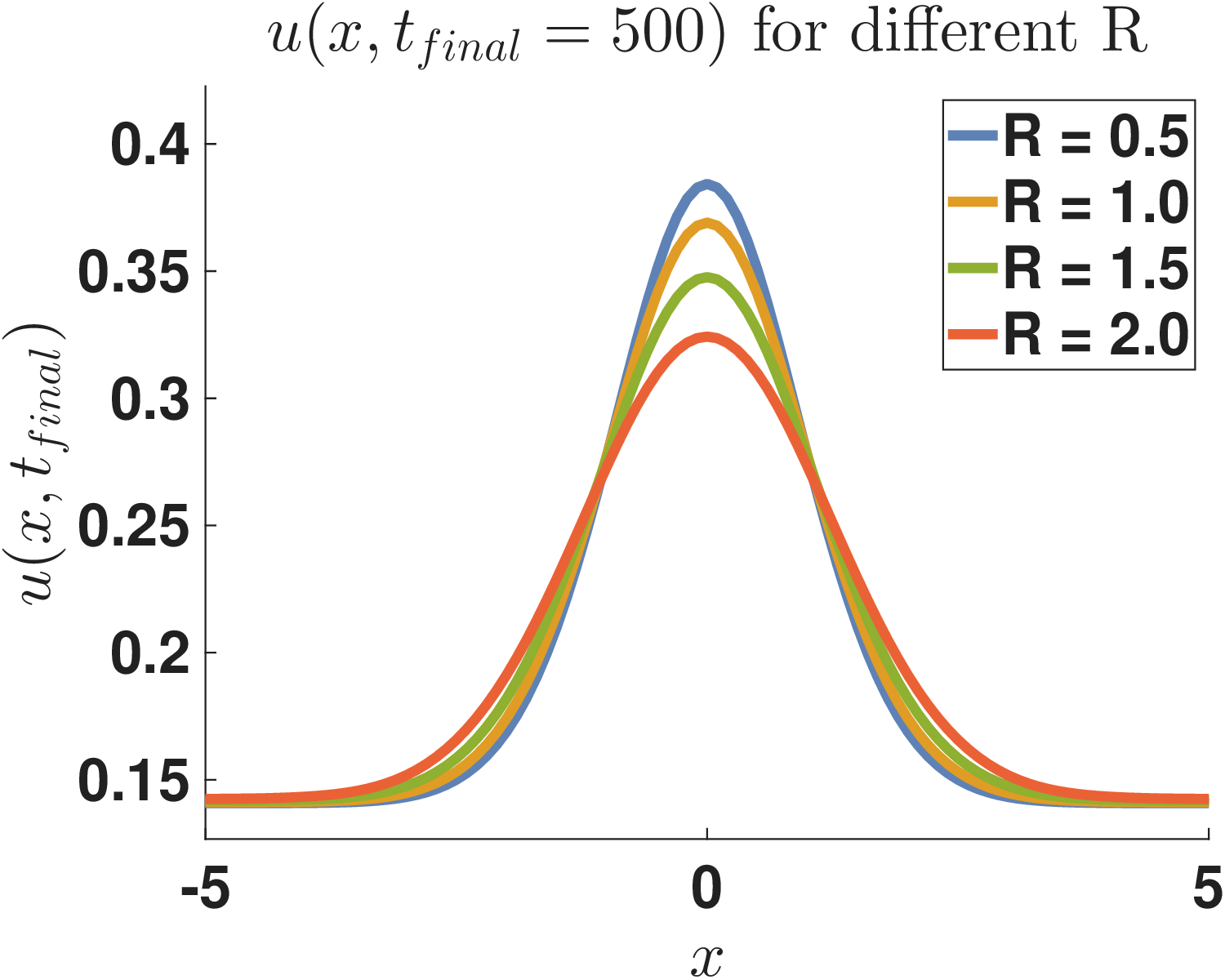}
    \caption{The left panel shows normalized kernels with $R=0.5,1,1.5,$ and $2$. In the second figure, the environment is given as $s(x) = \frac{1}{\sqrt{2\pi}}e^{-x^2/2} + 0.1$. $\bar s(x)$ closely tracks $s(x)$, with mild variation across $R$. In the last figure, population distributions are similar across $R$. The main shape of the profiles comes from $s(x)$, not the boundary truncation.}
    \label{fig:hetero s}
\end{figure}

The non-normalized kernel aggregates the total amount of information within the perceptual range. Near the boundary, the accessible region is smaller, so individuals obtain less information even if the underlying $s(x)$ is constant. Thus, boundary locations are perceived as less favorable. On the other hand, the normalized kernel corresponds to sensing the average concentration in the neighborhood. Even if the perceptual ball is truncated near the boundary, the average equals $s$ in a homogeneous environment, so boundary and interior are indistinguishable.

Individuals move according to their cognitive map, and this shapes the resulting population distribution. In this regard, the structure of the cognitive map becomes more important than the geometric shape of the environment itself. In the next subsection, our focus is therefore on lingering behaviour over a heterogeneous cognitive map rather than on boundary effects, and we accordingly adopt the normalized kernel for numerical simulations.

\subsection{The lingering phenomenon on heterogeneous landscapes}
In this subsection, we study how the learning and forgetting rates $\alpha$ and $\mu$ shape the lingering phenomenon on a given heterogeneous environment. We express the heterogeneous environment through the cognitive map $\bar s(x)$ as defined in \eqref{bars}. The spatial structure of the cognitive map \(\bar s(x)\) is fixed, and our main interest is how the parameters \(\alpha\) (learning) and \(\mu\) (forgetting) modulate the long-time distribution of individuals on this landscape. We fix the normalized perception kernel and consider a one-dimensional symmetric domain $\Omega=(-\ell,\ell)$. Our goal is to characterize the lingering phenomenon, where the steady-state density $u$ becomes more concentrated near favourable locations of $\bar s$ for intermediate values of the memory parameter $\mu$, while very small or very large $\mu$ lead to more uniform distributions.

We therefore study the steady-state problem of \eqref{eqn: dispersalonly}:
\begin{equation}\label{eqn: SS dispersal}
\left\{
\begin{aligned}
&0  = \big(\gamma(m_\infty) u_\infty\big)''
&& \text{in } (-\ell,\ell)\times(0,\infty), \\[4pt]
&0 = \big(\alpha\,\bar s(x)-m_\infty\big)\,u_\infty - \mu\,m_\infty
&& \text{in } (-\ell,\ell)\times(0,\infty),\\
&\big(\gamma(m_\infty) u_\infty\big)'(\pm\ell) = 0
&& \text{on } (0,\infty)
\end{aligned}
\right.
\end{equation}
and assume that the solutions to \eqref{eqn: dispersalonly} converge to a solution to a steady-state solving \eqref{eqn: SS dispersal} as $t\to \infty$. By \cite[Theorem 7.3]{amann1990}, the solution $u$ of \eqref{eqn: dispersalonly} is $C^1$ in $\alpha,\mu$, hence we also assume that $u_\infty$ is $C^1$ in terms of $\alpha,\mu$. We view the steady states as functions of $x$ parameterized by $(\alpha,\mu)$, i.e.,
$u_\infty=u_\infty(x;\alpha,\mu)$ and $m_\infty=m_\infty(x;\alpha,\mu)$. Since the homogeneous Neumann boundary condition implies conservation of total mass for \eqref{eqn: dispersalonly}, the limiting steady state satisfies that \begin{equation}
    \label{mass conservation}
    \int_{-\ell}^\ell u_\infty(x;\alpha,\mu) dx = M,
\end{equation} where $M>0$ is determined by the initial mass of \eqref{eqn: dispersalonly} and hence is independent of $\alpha$ and $\mu$.

Before we present the main lingering result of this section, we consider an illustrative numerical example. 
For simulations, we choose $\ell=5$ and the resource distribution $s(x)$ is chosen as
\begin{equation*}
    s(x) = \frac{1}{\sqrt{2\pi}}e^{-x^2/2}+0.1 \,\,\text{ for }x\in[-5,5].
\end{equation*} We choose $\gamma$ as in \eqref{gamma} and the normalized kernel $K(x,y)=J_R(x-y)/Z_R(x)$, where $J_R$ is  \eqref{bump} with the perceptual radius $R=1.5$.

Figure \ref{fig:lingering} illustrates the effect of the forgetting rate $\mu$ for two values of the learning rate $\alpha=1,10$.  
 We choose the initial condition 
$u_0(x)=s(x)$ and compute the solution of \eqref{eqn: dispersalonly}
up to $t_{\text{final}}=500$. On the left ($\alpha=1$), the stationary densities $u(x,t_{\text{final}})$ remain close to a flat profile for all $\mu$. Memory is too weak to produce a pronounced peak. On the right ($\alpha=10)$, however, intermediate values of $\mu$ lead to a strong accumulation near the maximum of $\bar s(x)$, whereas very small and very large 
$\mu$ give flatter profiles. 

To quantify this behavior, Figure \ref{fig:maxu_mu} plots the maximum of $u(x,t_{\text{final}})$ as a function of $\mu$. For $\alpha=1$ (left), the curve decreases monotonically with increasing forgetting rate $\mu$ and lingering does not occur. For $\alpha=10$ (right), the curve first increases and then decreases, showing that an intermediate memory time maximizes the peak density at the resource. This non-monotone dependence of $u_\infty(0;\alpha,\mu)$ is what we refer to as the \emph{lingering phenomenon}.
\begin{figure}[ht]
    \centering
    \includegraphics[width=0.4\linewidth]{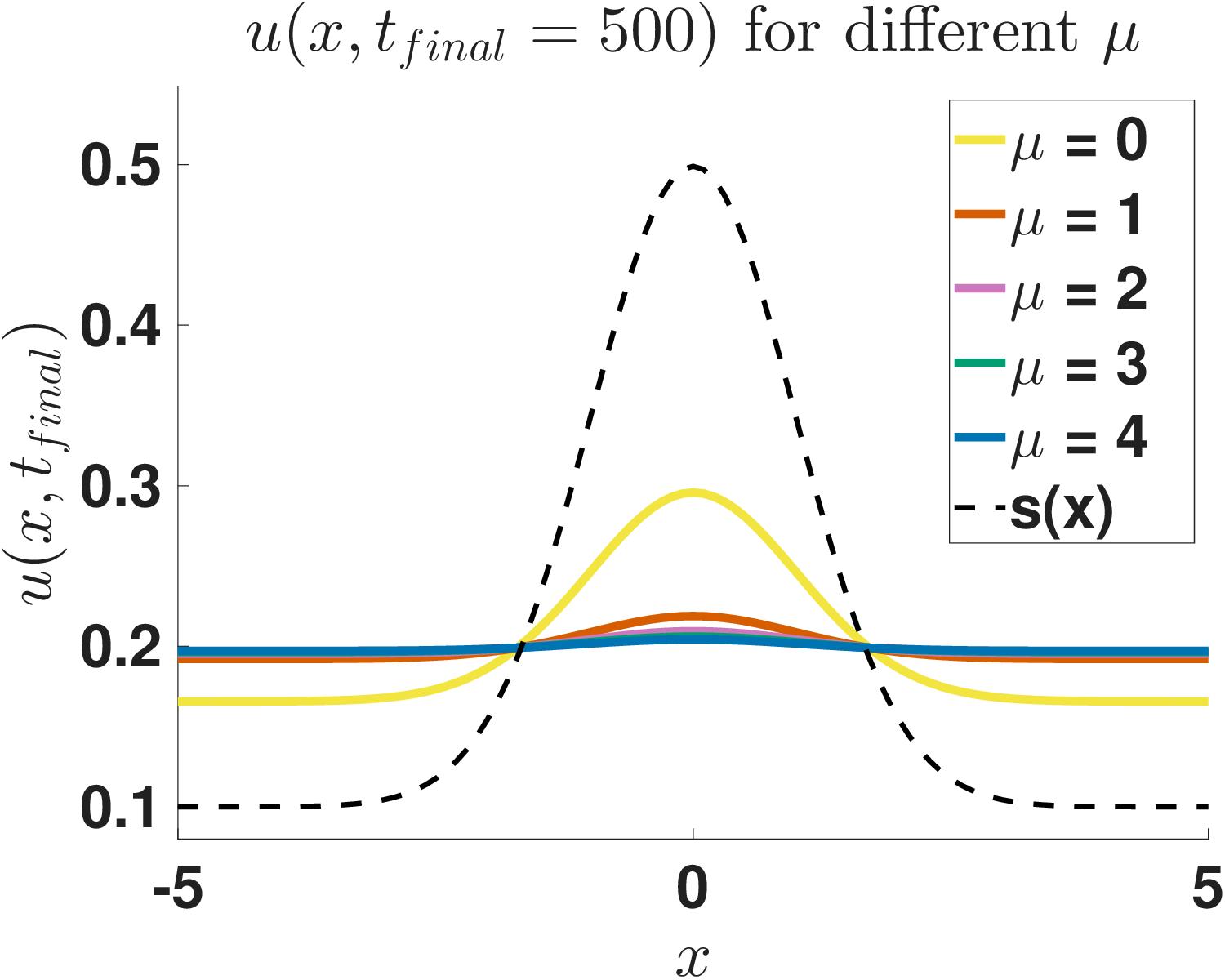}
    \includegraphics[width=0.4\linewidth]{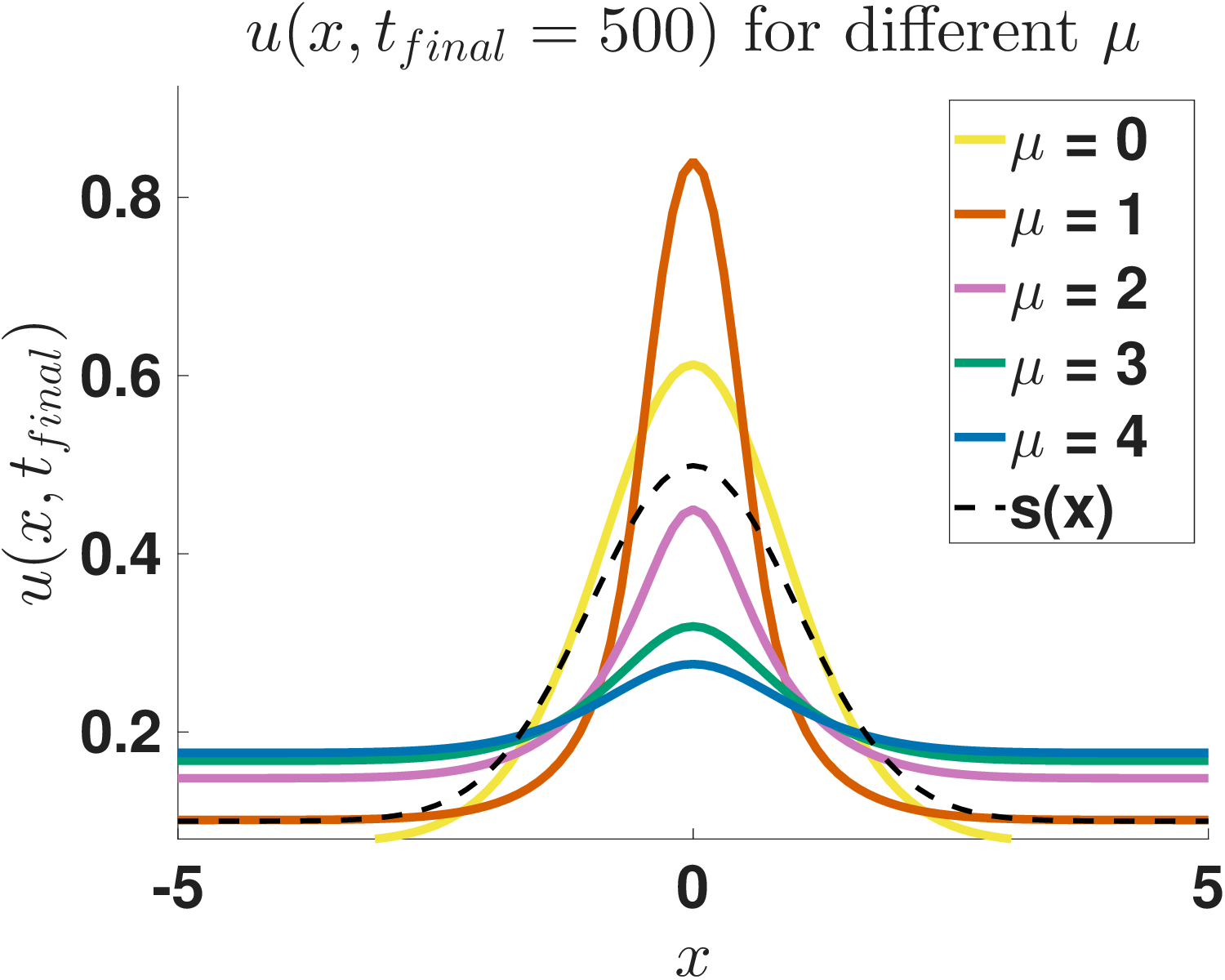}
    
    \caption{Snapshots of $u(x,t_{\mathrm{final}}=500)$ for different values of the forgetting rate $\mu$. The left panel corresponds to $\alpha=1$ and the right panel to $\alpha=10$. The initial condition is $u_0=s$ and we use the bump kernel \eqref{bump} with radius $R=1.5$.
}
    \label{fig:lingering}
\end{figure}

\begin{figure}[ht]
    \centering
    \includegraphics[width=0.4\linewidth]{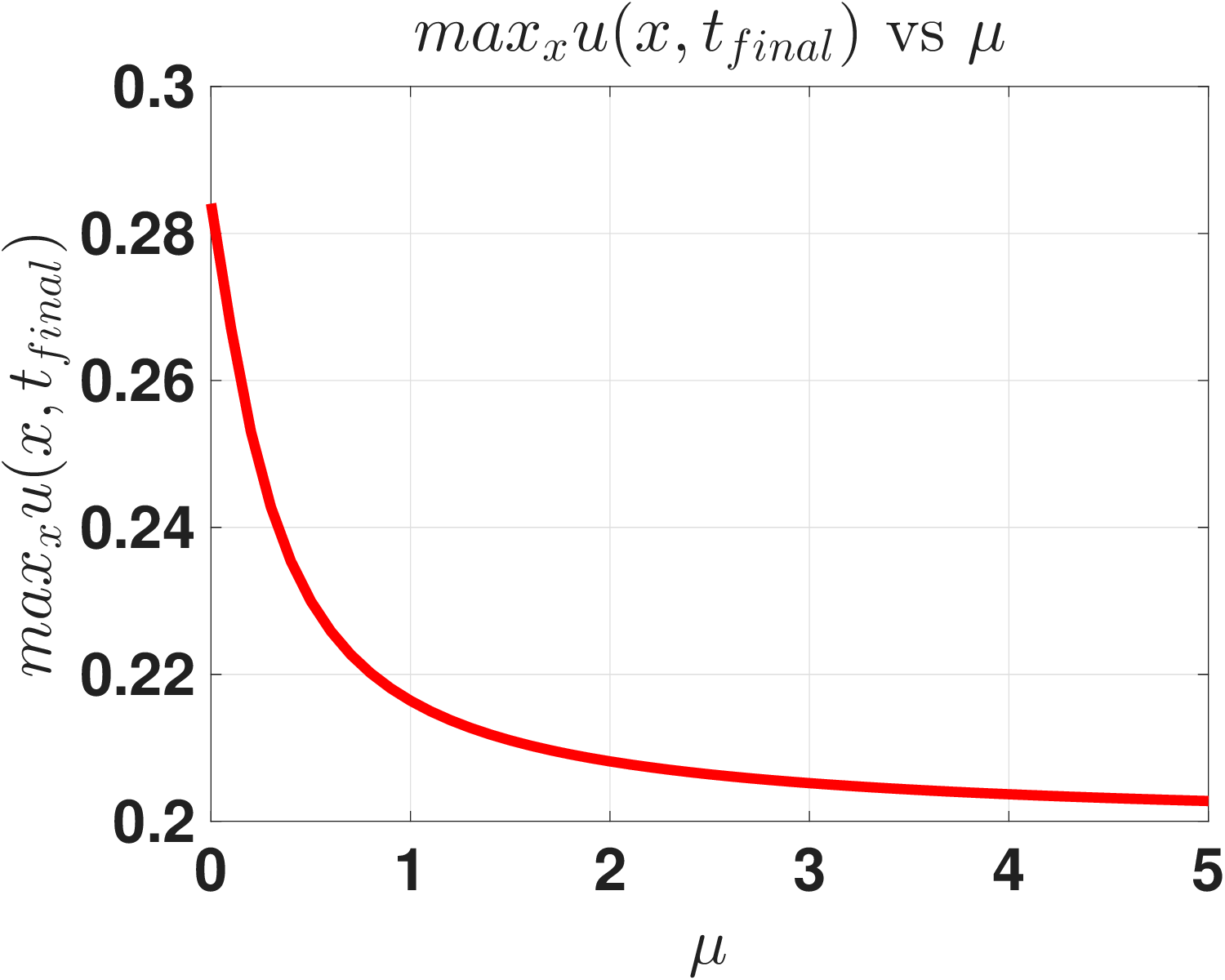}~\includegraphics[width=0.4\linewidth]{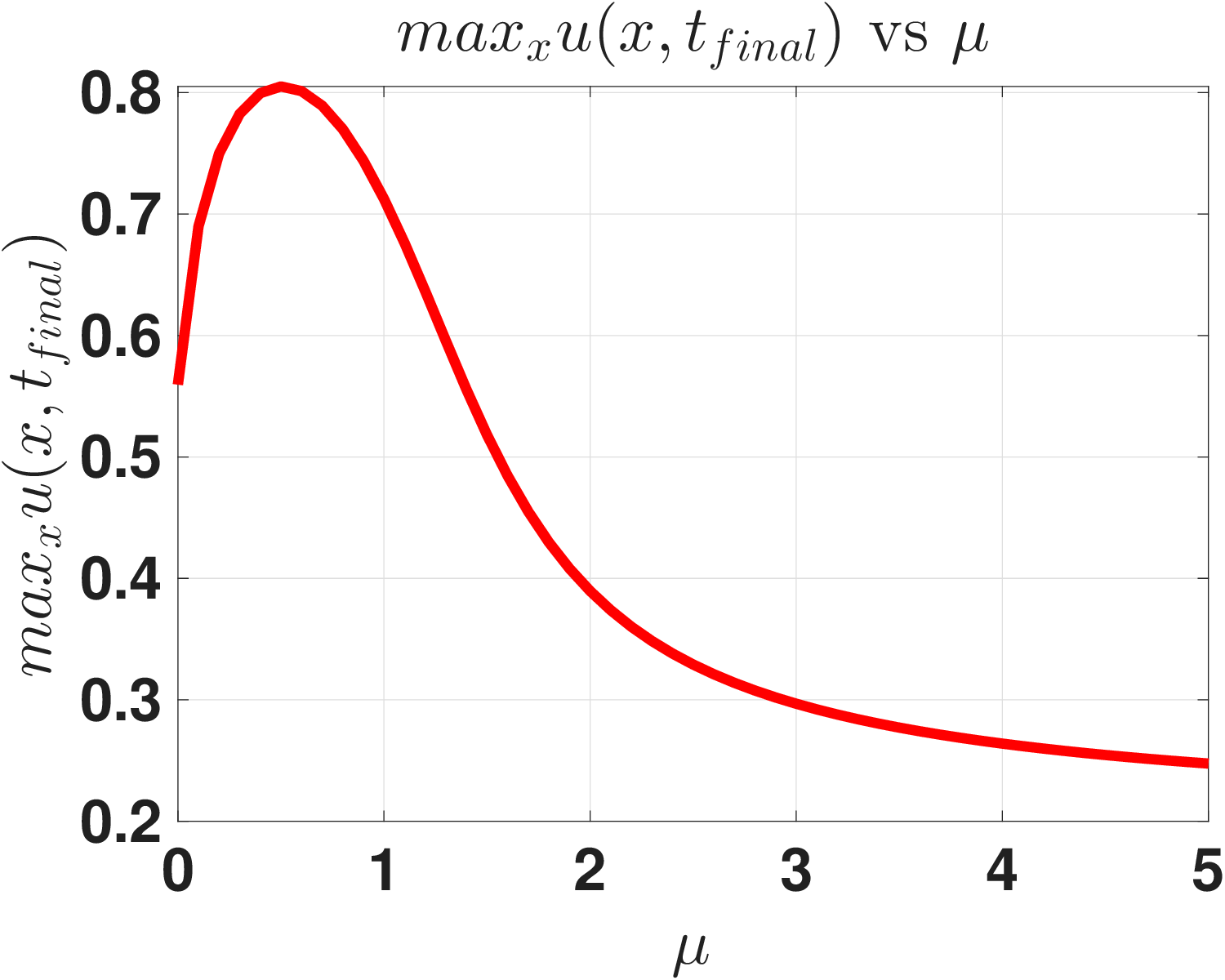}
    \caption{Maximum population density $\max_{x\in\Omega}u(x,t_{\mathrm{final}})$ as a function of the forgetting rate $\mu$. The left panel corresponds to $\alpha=1$ and the right panel to $\alpha=10$. The initial condition is $u_0=s$ and we sample $\mu=0,0.1,0.2,\dots,4.9,5$.
}
    \label{fig:maxu_mu} 
\end{figure}

\begin{theorem}[Lingering phenomenon on unimodal $\bar s(x)$]\label{thm:lingering}
Let $\gamma\in C^2[0,\infty)$. Assume that $\bar s(x)$ is even in $(-\ell,\ell)$ and decreasing in $(0,\ell)$. Given $\alpha>0,\mu\ge0$, let $(u_\infty(\cdot;\alpha,\mu),m_\infty(\cdot;\alpha,\mu))$ be the solution of \eqref{eqn: SS dispersal} that satisfies \eqref{mass conservation}. 
\begin{enumerate}
\item[(i)] There exist constants $\underbar{u}(\alpha),\bar u(\alpha) >0$ such that $ \underbar u(\alpha)\le u_\infty(x;\alpha,\mu)\le \bar u(\alpha)$ for all $x\in \Omega$ and $\mu\ge 0$.
\item[(ii)]  $0<m_\infty(x;\alpha,\mu)\le \alpha\bar s(x)$ for all $\mu\ge 0$. Furthermore, $m_\infty(\cdot;\alpha,\mu)\to 0$ uniformly as $\mu\to\infty$.

    \item [(iii)] There exist constants $C>0$ and $\mu_0$ such that for all $\mu\ge \mu_0$, $u_\infty$ has its maximum at $x=0$, and
\[
    \frac{\partial u_\infty}{\partial \mu}(0;\alpha,\mu)<0 \text{ and } \bigl|\frac{\partial u_\infty}{\partial \mu}(x;\alpha,\mu)\bigr| \;\le\; \frac{C}{\mu^2}
    \qquad \text{for all } x\in[-\ell,\ell].
\]
In particular, $u_\infty(\cdot;\alpha,\mu)\to \frac{M}{2\ell}$ uniformly as $\mu\to\infty$. 
    
    \item[(iv)] If $z\mapsto -z\gamma'(z)$ is increasing on $(0,z_0]$ and decreasing on $(z_0,\infty)$, then there exists $\alpha_1<\alpha_2$ such that $\frac{\partial u_\infty}{\partial \mu}(0;\alpha,0)<0$ for $\alpha <\alpha_1$ and $\frac{\partial u_\infty}{\partial \mu}(0;\alpha,0)>0$ for $\alpha >\alpha_2$.
\end{enumerate}
\end{theorem}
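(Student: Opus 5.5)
The plan is to reduce \eqref{eqn: SS dispersal} to an explicit algebraic system. Integrating $(\gamma(m_\infty)u_\infty)''=0$ with the Neumann conditions gives $\gamma(m_\infty)u_\infty\equiv c$ for some constant $c=c(\alpha,\mu)>0$. The second equation gives
\[
m_\infty=\frac{\alpha\bar s\,u_\infty}{u_\infty+\mu}.
\]
So at each $x$ the pair $(u,m)=(u_\infty(x),m_\infty(x))$ solves $u\,\gamma\bigl(\alpha\bar s(x)u/(u+\mu)\bigr)=c$. The map $\Phi(u;\sigma):=u\,\gamma(\sigma u/(u+\mu))$ will be the central object, with $\sigma=\alpha\bar s(x)$. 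The constant $c$ is then fixed by \eqref{mass conservation}.

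For (ii), the formula immediately gives $0<m_\infty<\alpha\bar s$ when $u_\infty>0$, with equality when $\mu=0$. It also gives $m_\infty\le \alpha\bar s\,u_\infty/\mu\to0$, uniformly once (i) supplies a bound on $u_\infty$ independent of $\mu$.

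For (i), $\gamma$ is decreasing, so $\gamma(\alpha\max\bar s)\le\gamma(m_\infty)\le\gamma(0)$. Hence $u_\infty=c/\gamma(m_\infty)$ varies within the fixed ratio $\gamma(0)/\gamma(\alpha\max\bar s)$ across $x$. Combined with $\int u_\infty=M$, this yields
\[
\frac{M\gamma(\alpha\max\bar s)}{2\ell\gamma(0)}\le u_\infty\le\frac{M\gamma(0)}{2\ell\gamma(\alpha\max\bar s)},
\]
and both bounds are independent of $\mu$.

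For (iii), at large $\mu$ we have $m_\infty=O(1/\mu)$. Then $u_\infty=c/\gamma(m_\infty)$ is largest where $m_\infty$ is largest, which is near $x=0$ since $\bar s$ peaks there. To make this rigorous, differentiate the identities $\gamma(m)u=c$ and $m(u+\mu)=\alpha\bar s u$ in $\mu$, using the assumed $C^1$ dependence. Write $u_\mu=\partial_\mu u_\infty$, and similarly $m_\mu$, $c_\mu$. This gives
\[
\gamma' m_\mu u+\gamma u_\mu=c_\mu,\qquad m_\mu(u+\mu)+m=(\alpha\bar s-m)u_\mu.
\]
The second identity gives $m_\mu=\bigl((\alpha\bar s-m)u_\mu-m\bigr)/(u+\mu)=-m/\mu+O(|u_\mu|/\mu)=O(\mu^{-2})+O(|u_\mu|/\mu)$. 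Substituting into the first identity, and using $\int u_\mu=0$ to eliminate $c_\mu$, yields a linear relation of the form $u_\mu(1+O(1/\mu))=\text{const}+O(\mu^{-2})$. This gives $|u_\mu|\le C\mu^{-2}$.

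The sign of $u_\mu(0)$ comes from the leading term. To leading order, $u_\mu\approx\frac{u}{\gamma}\bigl(-\gamma'(0)\bigr)\bigl(-\frac{\alpha\bar s u}{\mu^2}\bigr)+c_\mu/\gamma$. The first part is most negative where $\bar s u^2$ is largest, namely $x=0$, since $u$ is nearly constant. Meanwhile $c_\mu$ is determined by a weighted average because $\int u_\mu=0$. Hence $u_\mu(0)<0$ as soon as $\bar s$ is nonconstant. Uniform convergence to $M/2\ell$ follows by integrating the bound $|u_\mu|\le C\mu^{-2}$ in $\mu$ and using $\int u_\infty=M$.

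For (iv), at $\mu=0$ we have $m=\alpha\bar s$ exactly, so $u_\infty(x;\alpha,0)=c_0/\gamma(\alpha\bar s(x))$. Differentiating at $\mu=0$ gives $m_\mu=-m/u=-\alpha\bar s/u$. Then
\[
u_\mu=\frac{c_\mu}{\gamma}+\frac{\alpha\bar s\,\gamma'(\alpha\bar s)}{\gamma(\alpha\bar s)},
\]
which is the sum of $c_\mu/\gamma(\alpha\bar s)$ and $-g(\alpha\bar s)/\gamma(\alpha\bar s)$, where $g(z)=-z\gamma'(z)\ge0$. The constraint $\int u_\mu=0$ fixes
\[
c_\mu=\frac{\int g(\alpha\bar s)/\gamma(\alpha\bar s)}{\int 1/\gamma(\alpha\bar s)}.
\]
So the sign of $u_\mu(0)$ is the sign of $\langle g\rangle_w-g(\alpha\bar s(0))$, a weighted average of $g(\alpha\bar s)$ minus its value at the top of the landscape.

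For small $\alpha$, all values $\alpha\bar s$ lie in $(0,z_0]$, where $g$ is increasing, so $g(\alpha\bar s(0))$ is the maximum and $u_\mu(0)<0$. For large $\alpha$ with $\alpha\min\bar s>z_0$, all values lie where $g$ is decreasing, so $g(\alpha\bar s(0))$ is the minimum and $u_\mu(0)>0$. This gives $\alpha_1=z_0/\max\bar s$ and $\alpha_2=z_0/\min\bar s$, provided $\min\bar s>0$ and $\bar s$ is nonconstant.

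The main obstacle is (iii): obtaining the uniform $O(\mu^{-2})$ bound and the sign of $u_\mu(0)$. This requires careful handling of $c_\mu$ through the mass constraint, and a first-order expansion in $1/\mu$ with remainders controlled by the bounds from (i).
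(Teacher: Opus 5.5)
Your overall route is the paper's. You use $\gamma(m_\infty)u_\infty\equiv c$ (the paper's $K$), the explicit formula for $m_\infty$, and $\mu$-independent bounds from the range of $\gamma$ plus the mass constraint. You then differentiate implicitly in $\mu$, with $c_\mu$ fixed through $\int u_\mu\,dx=0$ as a weighted average. In (iv) you compare $g(\alpha\bar s(0))$ with a weighted mean of $g(\alpha\bar s)$. Eliminating $m_\mu$ from your two differentiated identities gives exactly the paper's relation $\frac{\partial F}{\partial u}\,u_\mu=c_\mu-\frac{\partial F}{\partial \mu}$, so the $O(\mu^{-2})$ bound and part (iv) go through as you describe. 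Your leading-order sign argument for $u_\mu(0)$ is the paper's. To make it rigorous in either version, you need the remainder to be $O(\mu^{-3})$ rather than the $O(\mu^{-2})$ the paper writes. This follows from $\gamma\in C^2$, $u_\mu=O(\mu^{-2})$ and $u_\infty-M/2\ell=O(\mu^{-1})$. Besides nonconstancy of $\bar s$, which you correctly flag, you also need $\gamma'(0)<0$.

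Two claims in (iii) are, however, not established. First, ``$u_\infty=c/\gamma(m_\infty)$ is largest where $m_\infty$ is largest, which is near $x=0$'' is circular, since $m_\infty=\alpha\bar s\,u_\infty/(u_\infty+\mu)$ depends on $u_\infty$ itself. Also, ``near $x=0$'' is not the asserted maximum at $x=0$. This is exactly what your map $\Phi$ (introduced but never used) is for. On one hand, $\partial_\sigma\Phi=u^2\gamma'(m)/(u+\mu)\le 0$. On the other hand, for $u\in(0,\bar u(\alpha)]$ one has $\partial_u\Phi=\gamma(m)+\gamma'(m)\sigma\mu u/(u+\mu)^2\ge\gamma(\alpha\max\bar s)-C/\mu>0$ once $\mu\ge\mu_0$. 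Then $\Phi(u_\infty(x);\alpha\bar s(x))=c$ forces $\bar s(x_1)\ge\bar s(x_2)\Rightarrow u_\infty(x_1)\ge u_\infty(x_2)$, so the maximum is at $x=0$. The paper obtains the same conclusion by differentiating $\gamma(m_\infty)u_\infty=K$ in $x$, which gives $\frac{\partial F}{\partial u}u_\infty'=-\alpha\frac{u_\infty}{u_\infty+\mu}\gamma'(m_\infty)\bar s'$. Second, integrating $|u_\mu|\le C\mu^{-2}$ only shows that $u_\infty(\cdot;\alpha,\mu)$ converges uniformly to some limit of mass $M$, not that this limit is the constant $M/2\ell$. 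Use instead $u_\infty=M\big/\bigl(\gamma(m_\infty)\int_{-\ell}^{\ell}\gamma(m_\infty)^{-1}dx\bigr)$ together with $m_\infty\to0$ uniformly from (ii), as the paper does.
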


\begin{proof}
\textbf{(i)} 
Recall from \eqref{mass conservation} that the constant $M=\int_\Omega u_\infty(x;\alpha,\mu)dx$ is independent of $\alpha$ and $\mu$. Let $x\in(-\ell,\ell)$ and $\alpha,\mu>0$ be fixed. 
By using the boundary condition again, there exists $K=K(\alpha,\mu)>0$ independent of $x$ such that \begin{equation}\label{eqn:SSrel}
     \gamma(m_\infty)u_\infty=K.
 \end{equation} 
   By using \eqref{eqn:SSrel}, the constant $K$ can be expressed by
\[K = \frac{M}{\int_\Omega\frac{1}{\gamma(m_\infty(x;\alpha,\mu))}dx}.\] Since $\gamma$ is bounded below by $\gamma(\alpha\max  \bar s)$ and bounded above by $\gamma(0)$, the map $\mu\mapsto K(\alpha,\mu)$ is bounded, that is, \[ \frac{\gamma(\max{\alpha \bar s}) M}{|\Omega|}\le K(\alpha,\mu)\le \frac{\gamma(0) M}{|\Omega|}.\] The bounds for $K$, $\gamma$ and \eqref{eqn:SSrel} ensures that
\[\frac{\gamma(\max{\alpha \bar s}) M}{\gamma(0)|\Omega|}\le u_\infty(x;\alpha,\mu)\le \frac{\gamma(0) M}{\gamma(\alpha \max{\bar s})|\Omega|}. \]

\textbf{(ii)} The equality $m_\infty = \alpha\bar s\dfrac{u_\infty}{u_\infty +\mu}$ directly yields the bound. Since $u_\infty$ is uniformly bounded in $\mu$, $m_\infty(\cdot;\alpha,\mu) \to 0$ uniformly in $\overline{\Omega}$ as $\mu\to \infty$.

\textbf{(iii)} Recall the constant $K=K(\alpha,\mu)$ from \eqref{eqn:SSrel}. Let $F(u,K,\alpha,\mu) = \gamma(m)u - K$, so we have $F(u_\infty,K,\alpha,\mu)=0$ in $\overline{\Omega}$. Differentiate both sides with respect to $\mu$:
\begin{align}\label{eqn: total deriv}
    \frac{\partial F}{\partial u}  \frac{\partial u_\infty}{\partial \mu} +\frac{\partial F}{\partial K} \frac{\partial K}{\partial \mu}  +\frac{\partial F}{\partial \mu}  =0,
\end{align} where
\begin{align*}
    \frac{\partial F}{\partial u}  = \gamma(m_\infty) +\alpha \bar s\gamma'(m_\infty) \frac{\mu u_\infty}{(u_\infty+\mu)^2}, \,\,\frac{\partial F}{\partial K}  = -1, \text{ and } \frac{\partial F}{\partial \mu}  = -\alpha\bar s\gamma'(m_\infty) \frac{u_\infty^2}{(u_\infty+\mu)^2}.
\end{align*}
Note that $m_\infty$ and $u_\infty$ are uniformly bounded on $\overline{\Omega}$, and $m_\infty \to 0$ as $\mu\to\infty$ by (i) and (ii). Since $\gamma\in C^2[0,\infty)$, $\gamma(m_\infty)\to\gamma(0)$ and $\frac{\partial F}{\partial u} (x)\to \gamma(0)$ uniformly as $\mu\to\infty$.  Choose a sufficiently large $\mu_0>0$ such that for some $c_0>0$,
\[\frac{\partial F}{\partial u}  = \gamma(m_\infty) +\alpha \bar s\gamma'(m_\infty) \frac{\mu u_\infty}{(u_\infty+\mu)^2}>c_0,\quad \frac{\partial F}{\partial u} =\gamma(0)+O(\mu^{-1}), \text{ and, }\]\[\frac{\partial F}{\partial \mu}  = -\alpha\bar s\gamma'(m_\infty)\frac{u_\infty^2}{(u_\infty+\mu)^2}\le \frac{C}{\mu^2} \] for all  $ \mu>\mu_0$, where $C$ is a positive constant independent of $\mu$. We  differentiate both sides of $\gamma(m_\infty)u_\infty=K$ with respect to $x$ to have
\[ \frac{\partial F}{\partial u}u' = -\alpha\frac{u_\infty}{u_\infty+\mu}\gamma'(m_\infty)\bar s'. \] Since $\frac{\partial F}{\partial u}>0$ and $ \gamma'(m_\infty)<0$, $u'$ and $\bar s'$ have the same sign. Therefore, $u$ has its maximum at $x=0$.

The mass constraint from (i) implies
\[
0 = \frac{d}{d\mu}\int_{-\ell}^{\ell} u_\infty\,dx
= \int_{-\ell}^{\ell} \frac{\partial u_\infty}{\partial \mu}\,dx
= \int_{-\ell}^{\ell} \frac{\frac{\partial K}{\partial \mu}  - \frac{\partial F}{\partial \mu} }{\frac{\partial F}{\partial u} }\,dx.
\]
Solving for $\frac{\partial K}{\partial \mu} $ gives
\[
\frac{\partial K}{\partial \mu} 
=
\frac{\displaystyle\int_{-\ell}^{\ell} \frac{\frac{\partial F}{\partial \mu} }{\frac{\partial F}{\partial u} }\,dx}
{\displaystyle\int_{-\ell}^{\ell} \frac{1}{\frac{\partial F}{\partial u} }\,dx}.
\]
The mass constraint $M=\int_{-\ell}^{\ell}u_\infty dx$ and \eqref{eqn:SSrel} gives
\[u_\infty = \frac{K(\mu)}{\gamma(m_\infty)} = \displaystyle\frac{M}{\gamma(m_\infty)\int_{-\ell}^{\ell}\frac{1}{\gamma(m_\infty)}dx}\to \frac{M}{\gamma(0)\frac{2\ell}{\gamma(0)}}=\frac{M}{2\ell}\] uniformly in $x$.

We now claim that $\frac{\partial u_\infty}{\partial \mu}<0$ for sufficiently large $\mu$. For large $\mu$, 
\[\frac{\partial F}{\partial \mu}  = -\alpha\bar s\gamma'(m_\infty) \frac{u_\infty^2}{(u_\infty+\mu)^2}=-\frac{\alpha\bar s(x)\gamma'(0)M}{2\ell\mu^2}+O(\mu^{-2})\] uniformly in $\overline{\Omega}$. Therefore, for large $\mu$
\[\frac{\partial K}{\partial \mu}  = \frac{\displaystyle\int_{-\ell}^{\ell}\frac{\frac{\partial F}{\partial \mu} }{\frac{\partial F}{\partial u} }dx}{\displaystyle\int_{-\ell}^{\ell}\frac{1}{\frac{\partial F}{\partial u} }dx}=\frac{\displaystyle\int_{-\ell}^{\ell}\frac{\frac{\partial F}{\partial \mu} }{\gamma(0)}dx}{\displaystyle\int_{-\ell}^{\ell}\frac{1}{\gamma(0)}dx} +O(\mu^{-1})= \overline{\frac{\partial F}{\partial \mu} } + O(\mu^{-1}),\] where $\overline{\frac{\partial F}{\partial \mu} } = \frac{1}{2\ell}\int_{-\ell}^\ell \frac{\partial F}{\partial \mu} (x)dx$. Solve \eqref{eqn: total deriv} for $\frac{\partial u_\infty}{\partial \mu}$, and we obtain
\[\frac{\partial u_\infty}{\partial \mu} = \frac{\frac{\partial K}{\partial \mu}  - \frac{\partial F}{\partial \mu} (x)}{\frac{\partial F}{\partial u} } = \frac{\overline{\frac{\partial F}{\partial \mu} }-\frac{\partial F}{\partial \mu} (x)}{\gamma(0)}+O(\mu^{-2}).  \]
Since $u_\infty$ and $\bar s$ has its maximum at $x=0$, $\overline{\frac{\partial F}{\partial \mu} }<\frac{\partial F}{\partial \mu} (x)$ for large $\mu$, hence $\frac{\partial u_\infty}{\partial \mu}<0$ for sufficiently large $\mu$. 

Using $\frac{\partial F}{\partial u} >0$ and $|\frac{\partial F}{\partial \mu} |\le C/\mu^2$, we obtain
\[
|\frac{\partial K}{\partial \mu} |
\le
\frac{\displaystyle\int_{-\ell}^{\ell} \frac{|\frac{\partial F}{\partial \mu} |}{\frac{\partial F}{\partial u} }\,dx}
{\displaystyle\int_{-\ell}^{\ell} \frac{1}{\frac{\partial F}{\partial u} }\,dx}
\le
\frac{\displaystyle\frac{C}{\mu^2}\int_{-\ell}^{\ell} \frac{1}{\frac{\partial F}{\partial u} }\,dx}
{\displaystyle\int_{-\ell}^{\ell} \frac{1}{\frac{\partial F}{\partial u} }\,dx}
= \frac{C}{\mu^2}.
\]

Finally, substituting it back into the expression for $\partial_\mu u$,
\[
|\partial_\mu u|
\le
\frac{|\frac{\partial K}{\partial \mu} | + |\frac{\partial F}{\partial \mu} (x,\mu)|}{\frac{\partial F}{\partial u} (x,\mu)}
\le
\frac{1}{c_0}\left(\frac{C}{\mu^2} + \frac{C}{\mu^2}\right)
= \frac{2C}{\mu^2},
\]
for some constant $C>0$ independent of $\mu$ (for all $\mu\ge \mu_0$). This proves the claimed $O(\mu^{-2})$ decay and uniform convergence $\partial_\mu u(\cdot,\mu)\to 0$ as $\mu\to\infty$.

\textbf{(iv)}  At $\mu=0$, we have 
\begin{equation*}
    \frac{\partial F}{\partial u}  = \gamma(\alpha \bar s)>0, \text{ and } \frac{\partial F}{\partial \mu}  = -\alpha\bar s\gamma'(\alpha\bar s)>0.
\end{equation*} Therefore, $u$ has its maximum at $x=0$ by the relation $\gamma(\alpha\bar s)u' = -\alpha\gamma'(\alpha \bar s)\bar s'$.  
If we combine the constraint and \eqref{eqn: total deriv}, we obtain 
\begin{align*}
    \frac{\partial K}{\partial \mu} \Big|_{{\mu=0}} = \frac{\int_{-\ell}^{\ell}\frac{\partial F}{\partial \mu}  \frac{\partial F}{\partial u} ^{-1}dx}{\int_{-\ell}^{\ell}\frac{\partial F}{\partial u} ^{-1}dx} =\frac{\int_{-\ell}^{\ell}-\alpha\bar s\gamma'(\alpha\bar s) \gamma(\alpha \bar s)^{-1}dx}{\int_{-\ell}^{\ell}\gamma(\alpha \bar s)^{-1}dx}.
\end{align*} Notice that $\frac{\partial K}{\partial \mu} $ is a weighted average of $-\alpha\bar s \gamma'(\alpha \bar s)$ over $(-\ell,\ell)$. We use \eqref{eqn: total deriv} again to have
\begin{equation*}
    \partial_\mu u\Big|_{\mu=0} = \frac{\frac{\partial K}{\partial \mu} \big|_{\mu=0} -(-\alpha\bar s\gamma'(\alpha\bar s))}{\gamma(\alpha\bar s)},
\end{equation*} which implies that the sign of $\partial_\mu u\big|_{\mu=0}$ is determined by the sign of $\frac{\partial K}{\partial \mu} \big|_{\mu=0} -(-\alpha\bar s\gamma'(\alpha\bar s))$. Suppose that the map $z\mapsto -z\gamma'(z)$ is increasing on $(0,z_0)$ and decreasing on $(z_0,\infty)$. Choose $\alpha_1 = z_0/\bar s(0)$ so that  $(\alpha\bar s(\ell),\alpha\bar  s(0))\subset (0,z_0)$ for $\alpha<\alpha^*$. Since $\bar s(0)$ is the maximum of $\bar s$, for $\alpha<\alpha_1$
\begin{equation*}
    -\alpha\bar s(x)\gamma'(\alpha\bar s(x)) - (-\alpha\bar s(0)\gamma'(\alpha\bar s(0))),
\end{equation*} hence $\frac{\partial K}{\partial \mu} \big|_{\mu=0} -(-\alpha\bar s\gamma'(\alpha\bar s))<0$, so $\partial_\mu u\big|_{\mu=0}<0$ at $x=0$. Let $\alpha_2 = z_0/\bar s(\ell)$. Then $(\alpha \bar s (\ell),\alpha \bar s (0))\subset (z_0,\infty)$. Similarly, for $\alpha>\alpha_2$, we have $\partial_\mu u\big|_{\mu=0}>0$ at $x=0$.

\end{proof} 
\begin{remark}
Parts (iii) and (iv) formalize the lingering phenomenon observed in Figure \ref{fig:maxu_mu}. For small 
$\alpha$, the peak density 
$u_\infty(0;\alpha,\mu)$ decreases with $\mu$, whereas for large $\alpha$ it first increases and then decreases, so an intermediate value of $\mu$ maximizes the peak.
\end{remark}

\begin{remark}
    Let $c>0,k>0$ and $\gamma(z) = \frac{1}{(z+c)^k}$. Then the map $z\mapsto -z\gamma'(z) = \frac{kz}{(z+k)^{k+1}}$ is increasing on $(0,\frac{c}{k})$ and decreasing on $(\frac{c}{k},\infty)$.
\end{remark}

This type of lingering can also arise  in asymmetric landscapes. Figure \ref{fig:asymmetric} shows an asymmetric memorized resource distribution $\bar s(x)$. For moderate memory ($\mu=0.5)$, individuals are more concentrated near the dominant peak, and this bias is stronger for a larger learning rate $\alpha$. The lingering effect is consistent with Theorem \ref{thm:lingering} at the dominant peak, but here the asymmetry of 
$\bar s(x)$ introduces an additional directional bias. 
\begin{figure}[ht]
    \centering
   \includegraphics[width=0.4\linewidth]{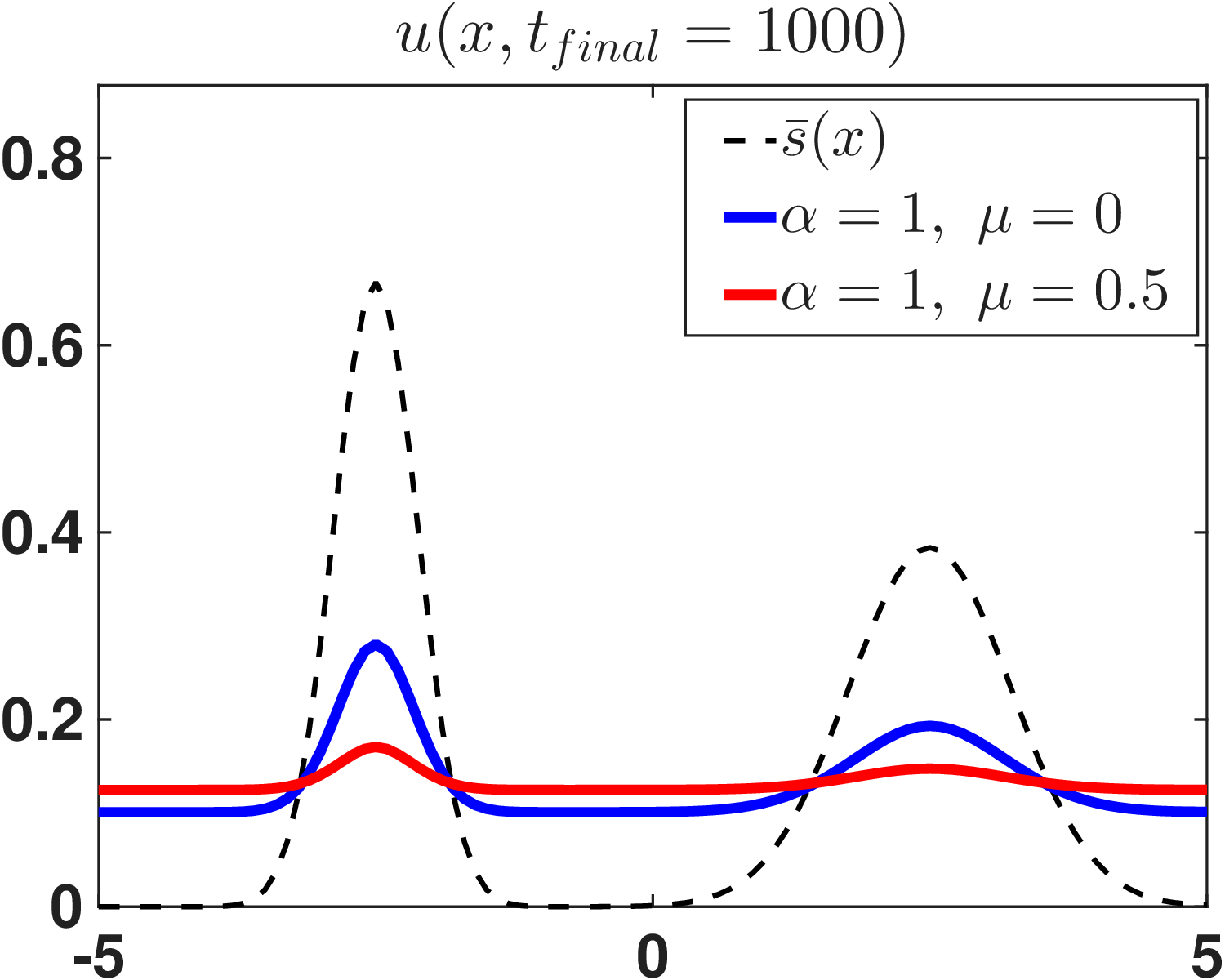}~
    \includegraphics[width=0.4\linewidth]{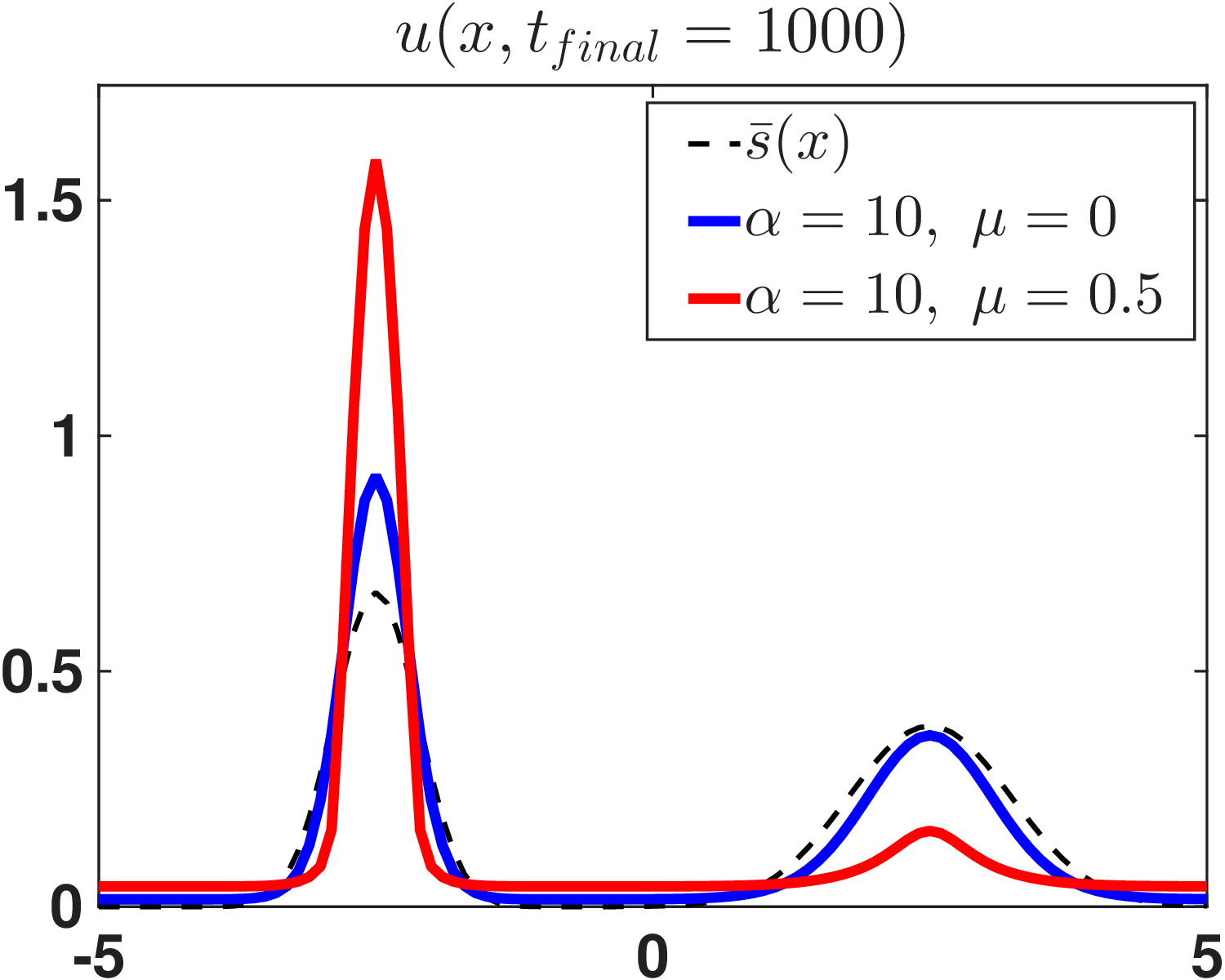}
    \caption{Numerical solutions $u(x,t_{\mathrm{final}}=1000)$ for an asymmetric memorized resource distribution $\bar s(x)$ (dashed curves). 
The normalized perception kernel with radius $R=1$ is used in all simulations. 
Left panel: learning rate $\alpha=1$ with forgetting rates $\mu=0$ and $\mu=0.5$. 
Right panel: learning rate $\alpha=10$ with $\mu=0$ and $\mu=0.5$. 
For $\alpha=10$ the case $\mu=0.5$ produces a sharper peak near the dominant resource patch, indicating stronger lingering on the heterogeneous cognitive map.
}
    \label{fig:asymmetric}
\end{figure}

\section{Population dynamics model}\label{sec:population dynamics}
In the previous section, we studied lingering behaviour in a purely movement-based model without births and deaths. We now incorporate logistic population dynamics into the Fokker--Planck type diffusion model to examine how demographic growth interacts with memory-driven movement.

We consider the reaction--diffusion model:
\begin{equation}\label{eqn:logistic}
\left\{\begin{aligned}
    &u_t = \Delta (\gamma(m)u) + f(s,u),  && \text{ in }\Omega \times (0,T),\\
    &m_t = (\alpha \bar s -  m) u-\mu m,\\
    &\nabla (\gamma(m)u) \cdot \textbf{n}=0, && \text{ on }\partial \Omega\times [0,T],\\
    &u(x,0) = u_0(x)>0, m(x,0) = m_0(x)\ge 0, &&\text{ in } \Omega,
\end{aligned}\right.
\end{equation} where $s=s(x)$ is a given food distribution. We also assume that $s(x)>0$ for all $x\in \Omega$ and $m_0(x) \le \alpha \bar s(x)$. The kinetic term $f(s,u)$ is of the form:
\begin{equation*}
    f(s,u) = g(s,u)u,
\end{equation*} where $g(s,0)= s$ and the map $u\mapsto g(s,u)$ is decreasing, Lipchitz continuous in $u$, and 
\[ |g(s,u)|\le C(1+|u|). \]
The function $g(s,u)$ represents the per-capita growth rate in an environment with local resource level $s(x)$. The condition $g(s,0)=s$ means that the initial growth rate is proportional to resource availability, while the monotonicity of the map $u\mapsto g(s,u)$ encodes crowding effects, meaning that higher density reduces per-capita growth. Lipschitz continuity and the growth bound are technical assumptions for well-posedness.

The above model (\ref{eqn:logistic}) has a few special cases that were studied before: \\

\noindent{\bf Example 1:} 
    When $\gamma(z)=\gamma$ for some positive constant $\gamma>0$ and $g(s,u)=s-u$ the system \eqref{eqn:logistic} reduces to a single reaction--diffusion equation
    \begin{equation}\label{eqn:constantgamma}
\left\{\begin{aligned}
    &u_t = \gamma\Delta u + u(s-u),  && \text{ in }\Omega \times (0,T),\\
    &\nabla u \cdot \textbf{n}=0, && \text{ on }\partial \Omega\times [0,T],\\
    &u(x,0) = u_0(x)>0, &&\text{ in } \Omega.
\end{aligned}\right.
\end{equation} The equation \eqref{eqn:constantgamma} is a classical Fisher-KPP equation (or diffusive--logistic equation) \cite{Murray2002}. It is well known that if $s(x)\ge 0$, then there exists a positive global equilibrium $u^*(x)$ for a proper diffusion rate $\gamma$ \cite{Cantrell1989,Lou2006}.

\noindent{\bf Example 2:} 
    When $m=s(x)$ and $g(s,u)=s-u$ the system \eqref{eqn:logistic} reduces to a single reaction--diffusion equation \begin{equation}\label{eqn:resource dependent}
\left\{\begin{aligned}
    &u_t = \Delta(\gamma(s(x)) u) + u(s(x)-u),  && \text{ in }\Omega \times (0,T),\\
    &\nabla (\gamma(s)u) \cdot \textbf{n}=0, && \text{ on }\partial \Omega\times [0,T],\\
    &u(x,0) = u_0(x)>0, &&\text{ in } \Omega.
\end{aligned}\right.
\end{equation} 

In \cite{Tang2023} the global dynamics of \eqref{eqn:resource dependent} and the total-mass property of its steady-state solution was studied. If we choose $\alpha=1$ and $\mu=0$ and assume that $\bar s = s$, then $m(x,t)\equiv s(x)$ is a steady cognitive map and \eqref{eqn:logistic} coincides with \eqref{eqn:resource dependent}. Biologically, this corresponds to individuals that use only local habitat information and do not forget.

\subsection{Local existence}
We first verify the well posedness of model \eqref{eqn:logistic} mathematically by proving the local existence of its unique solution.
To guarantee uniform ellipticity of the diffusion operator in the equation, we have the following hypothesis:
\begin{equation}
\label{hyp: gamma}    
\text{The  map } m\mapsto\gamma(m)>0 \text{ is $C^3([0,\infty))$, decreasing,}
\end{equation}
\begin{equation}
\label{hyp: id}    
\begin{aligned}
u_0\in C^{2+\theta}(\overline{\Omega}),\quad m_0\in C^{2+\theta}(\overline{\Omega}) \text{ for some } \theta\in(0,1) \text{ and } m_0\le \alpha \bar s.
\end{aligned}
\end{equation}
\begin{theorem}Let $0<\theta<1$. Let $\Omega\subset \mathbb{R}^n$ be a bounded domain with $C^{2+\theta}$ boundary. Assume  \eqref{hyp: gamma} and $s\in C^{2+\theta}(\overline{\Omega})$. Then there exists a unique nonnegative solution $(u,m)\in [C^{2+\theta,1+\theta/2}(\overline{\Omega}\times[0,T_{\max}))]^2$ to \eqref{eqn:logistic}, and for each $x\in \bar\Omega$
\[0\le m(x,t)\le \alpha \bar s(x) \text{ for all } t\le 0.\]
 Additionally, the following holds: Either $T_{\max}=\infty$ or $T_{\max}<\infty$, so that $\|u(\cdot,t)\|_{L^\infty(\Omega)}\to \infty $ as $t\to T_{\max}$.
\end{theorem}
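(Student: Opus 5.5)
The plan is to eliminate $m$ as an independent unknown by solving its equation as a linear ODE in $t$ for each fixed $x$, and then treat the $u$-equation as a quasilinear parabolic problem. Given $u$, the $m$-equation $m_t=-(u+\mu)m+\alpha\bar s u$ has the explicit solution
\[
m(x,t)=m_0(x)e^{-\int_0^t(u+\mu)\,d\tau}+\alpha\bar s(x)\int_0^t u(x,\sigma)e^{-\int_\sigma^t(u+\mu)\,d\tau}\,d\sigma ,
\]
where the exponents are evaluated at $x$. The bounds $0\le m\le\alpha\bar s$ follow by a comparison argument for this ODE. As long as $u\ge0$, the function $w=\alpha\bar s-m$ satisfies $w_t=-(u+\mu)w+\mu\alpha\bar s\ge-(u+\mu)w$ with $w(0)\ge0$, so $w\ge0$. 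Similarly $m_t\ge-(u+\mu)m$ with $m_0\ge0$ gives $m\ge0$. Hence $\gamma(m)\in[\gamma(\alpha\max\bar s),\gamma(0)]$, and uniform ellipticity holds automatically. Nonnegativity of $u$ will come from writing the $u$-equation in nondivergence form,
\[
u_t=\gamma(m)\Delta u+2\gamma'(m)\nabla m\cdot\nabla u+\big(\Delta\gamma(m)+g(s,u)\big)u,
\]
and applying the parabolic maximum principle. This is a linear equation in $u$ with bounded coefficients on a short time interval, and the Neumann condition becomes the oblique condition $\gamma\,\partial_n u+\gamma'(m)u\,\partial_n m=0$.

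For existence I will use a Banach fixed point on a closed ball of $X_T=C^{2+\theta,1+\theta/2}(\overline\Omega\times[0,T])$ with fixed initial data. Given $\tilde u$ in the ball, define $m=m[\tilde u]$ by the formula above. Since $s\in C^{2+\theta}$ and $K$ is a smooth, suitably normalized kernel, $\bar s\in C^{2+\theta}$, so $m\in C^{2+\theta,1}$ with norm controlled by $\|\tilde u\|_{X_T}$ and $\|m_0\|_{C^{2+\theta}}$. I then solve the linear problem above with coefficients $\gamma(m)$, $\nabla\gamma(m)$, $\Delta\gamma(m)$ and the reaction term frozen at $g(s,\tilde u)$. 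By Schauder theory for linear parabolic equations with oblique boundary conditions (Ladyzhenskaya--Solonnikov--Ural'tseva), this gives $u\in X_T$, provided the coefficients are $C^{\theta,\theta/2}$ and the boundary coefficients are $C^{1+\theta,(1+\theta)/2}$. The hypothesis $\gamma\in C^3$ is what is needed here, since $\Delta\gamma(m)$ involves $\gamma''$ and Hölder regularity of the coefficients requires one more derivative. Compatibility conditions at $t=0$ on $\partial\Omega$ must be assumed on the data, and I would add this to the hypotheses. A contraction estimate for small $T$ follows from the Lipschitz dependence of $m[\tilde u]$ on $\tilde u$ and the Lipschitz property of $g$. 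Alternatively, I could invoke Amann's theory for quasilinear triangular systems, already cited as \cite{amann1990}, which yields local existence, uniqueness and the maximal-time description directly, since the $m$-equation contains no diffusion and the principal part is normally elliptic.

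The extension criterion is proved by contradiction. Suppose $T_{\max}<\infty$ and $\|u\|_{L^\infty}$ stays bounded. Then $m$ is bounded by $\alpha\bar s$ and, through the ODE, inherits spatial regularity from $u$. One bootstraps: a Hölder estimate for $u$ from De Giorgi--Nash--Moser applied to the divergence form $\nabla\cdot(\gamma\nabla u+\gamma' u\nabla m)$ needs $\nabla m\in L^q$. Differentiating the $m$-equation in $x$ gives $\nabla m_t=-(u+\mu)\nabla m-m\nabla u+\alpha\nabla\bar s\,u+\alpha\bar s\nabla u$, so $\nabla m$ is controlled by $\int_0^t|\nabla u|$. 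Coupling this with an energy or $L^p$ estimate for $\nabla u$ over a finite horizon through a Gronwall argument should give uniform $C^{2+\theta}$ bounds up to $T_{\max}$, which allows the solution to be continued and gives the contradiction. This bootstrap is where I expect the real difficulty, because the loss of derivatives caused by the nondiffusing $m$ must be closed. My first attempt would be to rely directly on Amann's criterion, which reduces blow-up to failure of a bound in a Hölder or $W^{1,p}$ norm, and then supply that bound via the Gronwall coupling just described.

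Uniqueness comes from the contraction. Positivity of $u$ follows from the strong maximum principle since $u_0>0$. Finally, the stated inequality "for all $t\le0$" should read $0\le t<T_{\max}$, and this is what the ODE comparison above proves.
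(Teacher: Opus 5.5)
Your local theory is a legitimate variant of the paper's. You use a contraction in $C^{2+\theta,1+\theta/2}$, where the paper uses Schauder's theorem in $C^{1+\theta,(1+\theta)/2}$ on an $L^\infty$-ball. Your ODE comparison giving $0\le m\le\alpha\bar s$ is correct and cleaner than the paper's. Your remarks on the compatibility condition at $t=0$, on needing a smooth kernel for $\bar s\in C^{2+\theta}$, and on the typo $t\le 0$ are also right.

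\textbf{The gap: the blow-up alternative.} This is part of the statement, and you leave it as a hope. Your existence time depends on $\|u(t_0)\|_{C^{2+\theta}}$ and $\|m(t_0)\|_{C^{2+\theta}}$, so you must bound these by $\sup_t\|u(t)\|_{L^\infty}$. The bootstrap you describe does not do this for $n\ge 2$. In divergence form the flux contains $\gamma'(m)u\nabla m$, so each level of regularity of $u$ costs the same level for $m$, while the ODE returns $m$ only at the level of $\int_0^t u$.

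Concretely, testing the $u$-equation with $u$ and using your equation for $\partial_t\nabla m$ does close by a linear Gronwall inequality. But it only yields $\nabla m\in L^\infty(0,T;L^2)$ and $\nabla u\in L^2(Q_T)$. De Giorgi--Nash--Moser needs the drift $\gamma'(m)\nabla m$ in $L^\infty_tL^q_x$ with $q>n$, so this suffices only for $n=1$. $L^p$ gradient estimates for $p>2$ need continuity of $\gamma(m)$, which is exactly the H\"older bound you are trying to prove.

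Falling back on Amann does not help. His criterion is in a H\"older or $W^{1,p}$ norm, so the same estimate is missing. Moreover, the full $(u,m)$ system is not normally elliptic, because the non-diffusing component gives the principal symbol the eigenvalue $0$. The paper does not supply this step either; it only asserts that the existence time depends on the $L^\infty$ bound.

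\textbf{The missing idea.} Work with $U=\gamma(m)u$, which the paper itself uses in the extinction lemma. Since $U_t=\gamma(m)u_t+\gamma'(m)u\,m_t$, with $u=U/\gamma(m)$,
\[
U_t=\gamma(m)\Delta U+\gamma(m)g(s,u)u+\gamma'(m)u\big((\alpha\bar s-m)u-\mu m\big),\qquad \partial_{\mathbf n}U=0 \ \text{on }\partial\Omega .
\]
This is a nondivergence Neumann problem with no derivatives of $m$. Its leading coefficient lies in $[\gamma(\alpha\|\bar s\|_\infty),\gamma(0)]$, and its right-hand side is bounded in terms of $\sup_t\|u(t)\|_{L^\infty}$. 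The bootstrap then runs as follows.
\begin{enumerate}
\item Krylov--Safonov estimates, in their Neumann boundary version, give a uniform $C^{\beta,\beta/2}$ bound on $U$ requiring no regularity of $m$.
\item Write the ODE as $m_t=(\alpha\bar s-m)U/\gamma(m)-\mu m$ and apply Gronwall to spatial difference quotients. This transfers the H\"older bound to $m$.
\item Schauder theory for $U$ then gives $C^{2+\beta,1+\beta/2}$ bounds.
\item Differentiating the ODE once and twice in $x$ gives \emph{linear} Gronwall inequalities for $\nabla m$ and $D^2m$ in H\"older norms, forced by $\nabla U$ and $D^2U$.
\item One more pass with exponent $\theta$ yields the $C^{2+\theta}$ bounds on $(u,m)$ needed to restart your contraction.
\end{enumerate}

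\textbf{A smaller point.} A contraction in H\"older norms needs $u\mapsto g(s,u)$ to be $C^{1,1}$, because superposition by a merely Lipschitz function is not Lipschitz on $C^\theta$. This is harmless for $g=s-u$. Alternatively, get existence from Schauder's theorem and uniqueness from an $L^2$ subtraction argument, as the paper does.
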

    \begin{proof} We use a fixed-point argument to show existence and uniqueness. 
    Let $X=C^{1+\theta,(1+\theta)/2}(\overline{\Omega}\times [0,T_{\max}])$, and choose small enough $T\in (0,1)$. We define a closed bounded convex subset 
    \begin{equation*}
        K_T = \{ \tilde u\in X: \|\tilde u(\cdot,t)\|_{L^\infty(\overline{\Omega})}  \le M \text{ a.e. in } [0,T]\} 
    \end{equation*}
    For a given 
    $\Tilde{u}\in K_T$ we consider the auxiliary system
    \begin{equation}\label{eqn:aux}
\left\{\begin{aligned}
    &u_t = \nabla \cdot (\gamma(m)\nabla u) + \nabla\cdot (\gamma'(m)u\nabla m) +  g(s,\tilde u)u,  && \text{ in }\Omega \times (0,T)\\
    &m_t = (\alpha \bar s -  m) \tilde u-\mu m,\\
    &\nabla (\gamma(m)u) \cdot \textbf{n}=0 && \text{ on }\partial \Omega\times [0,T],
\end{aligned}\right.
\end{equation}
        Since $m$ can be expressed in the explicit form in terms of $\tilde u$:
        \begin{equation*}
            m(x,t) = e^{-(\int_0^t\tilde udt +\mu t)}m_0(x)+ \alpha \bar s(x)\int_0^t e^{-(\int_\sigma^t\tilde u dt+\mu(t-\sigma))}\tilde u(x,\sigma)d\sigma,
        \end{equation*} one can obtain the estimate for $m$:
        \begin{equation*}
            \|m\|_{X}\le C(\|\tilde u\|_{X} + \|m_0\|_{C^{1+\theta}(\overline{\Omega})}),
        \end{equation*} which results in $\nabla m\in L^{\infty}(\overline\Omega \times [0,T])$. By hypothesis \eqref{hyp: gamma}, $\gamma'(m)\nabla m \in L^{\infty}(\overline\Omega\times [0,T])$. Note that $g(x,\tilde u)\le C(1+M)$. 
        
        We claim that $m$ is uniformly bounded. Indeed, if $m\ge \alpha \bar s$, then
        \begin{equation*}
            m_t \le -\mu m\implies m(x,t)\le m(x,0)e^{-\mu t},
        \end{equation*} 
        and we have $0\le m(x,t) \le \max\{m_0(x),\alpha \bar s(x)\}.$ Since $\gamma$ is decreasing, there exists a constant $\gamma_0>0$ such that $\gamma(m)\ge \gamma_0$, which gives the uniform parabolicity to \eqref{eqn:aux}. 
        By Theorem V1.1 in \cite{Ladyzhenskaya1968}, there exists a unique solution $u$ to \eqref{eqn:aux} and 
        \begin{equation*}
            \|u\|_{C^{\theta_1,\theta_1/2}(\overline{\Omega}\times[0,T])}\le C
        \end{equation*} for some constant $C>0$. One can find
        \begin{equation*}
            \|u(\cdot,t)\|_{L^{\infty}(\Omega)}\le \|u_0\|_{L^{\infty}(\Omega)}+Ct^{\theta_1/2} 
        \end{equation*} for all $t\in (0,T)$.
        For sufficiently small $T$, we have $u(\cdot,t)\in L^\infty(\Omega)$. Due to the regularity of the initial condition and the parabolic regularity theory, $u\in X$, hence $u\in K_T$. We define a map $\Phi:K_T \to K_T$ by $u=\Phi(\tilde u)$.

       Following \cite{Choi2025DW, Jin2018}, the continuity and compactness of $\Phi$ are obtained by analogous arguments.
        By Schauder's fixed point theorem we thus obtain a local solution on $[0,T]$. Nonnegativity is followed from the maximum principle. Since $M$ depends only on the initial data, the construction can be iterated to extend the solution up to the maximal time $T_{\max}$. If $T_{\max}<\infty$, $\|u(\cdot,t)\|_{L^\infty(\Omega)}\to \infty$ as $t\to T_{\max}$. Otherwise, we can iterate the same process infinitely, so that $T_{\max}=\infty$.
        Uniqueness follows from the standard subtraction method in \cite{Choi2025DW,Jin2018}.
\end{proof}

\subsection{Population persistence and steady-state solutions}
In this subsection we show that the extinction equilibrium is unstable and that, under suitable conditions on $\gamma$, there exists a positive stable steady-state solution of \eqref{eqn:logistic}. Biologically, this corresponds to persistence of the population.

We now specialize on the classical logistic growth $g(s,u)=s-u$ and study the resulting steady-state structure. Our main questions are whether the population persists and whether the long-time spatial distribution is uniquely determined when individuals follow memory-based Fokker-Planck type diffusion.

\begin{lemma}[Unstable extinction equilibrium]
    Let $0<\theta<1$. Let $\Omega\subset \mathbb{R}^n$ be a bounded domain with $C^{2}$ boundary. Assume \eqref{hyp: gamma}, $s\in C^{2+\theta}(\overline{\Omega})$, and $s>0$ in $\overline \Omega$. Suppose that $g(s,u)=s-u$. Then the extinction steady-state solution $(u,m)=(0,0)$ of \eqref{eqn:logistic} is linearly unstable.
\end{lemma}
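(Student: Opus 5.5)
Linearize \eqref{eqn:logistic} with $g(s,u)=s-u$ at $(u,m)=(0,0)$ and exhibit a positive eigenvalue. Write $u=\varepsilon\phi$, $m=\varepsilon\psi$ and keep first-order terms. Since $\Delta(\gamma(m)u)$ is bilinear-type in $(u,m)$, its linearization at $(0,0)$ is $\gamma(0)\Delta\phi$: the term $\gamma'(0)\psi\cdot 0$ vanishes. Similarly $(\alpha\bar s-m)u-\mu m$ linearizes to $\alpha\bar s\,\phi-\mu\psi$, and $(s-u)u$ linearizes to $s\phi$. The boundary condition becomes $\gamma(0)\nabla\phi\cdot\mathbf n=0$. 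The linearized system is therefore triangular:
\[
\phi_t=\gamma(0)\Delta\phi+s(x)\phi,\qquad \psi_t=\alpha\bar s(x)\phi-\mu\psi,\qquad \partial_{\mathbf n}\phi=0 .
\]

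The $\phi$-equation decouples. I will study the Neumann eigenvalue problem $\gamma(0)\Delta\phi+s\phi=\lambda\phi$ in $\Omega$ with $\partial_{\mathbf n}\phi=0$. This operator is self-adjoint, and by standard Krein--Rutman or variational theory on a $C^2$ bounded domain its principal eigenvalue $\lambda_1$ is simple with a positive eigenfunction $\phi_1$. The Rayleigh quotient gives
\[
\lambda_1=\sup_{\phi\in H^1\setminus\{0\}}\frac{-\gamma(0)\int_\Omega|\nabla\phi|^2+\int_\Omega s\phi^2}{\int_\Omega\phi^2}.
\]
Testing with $\phi\equiv1$ yields $\lambda_1\ge \frac{1}{|\Omega|}\int_\Omega s\,dx>0$, using $s>0$ on $\overline\Omega$. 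Here $\gamma(0)>0$ by \eqref{hyp: gamma}.

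Next I build an eigenpair of the full linearized system. Take $\phi=\phi_1$ and solve $(\lambda_1+\mu)\psi=\alpha\bar s\phi_1$, giving $\psi_1=\alpha\bar s\phi_1/(\lambda_1+\mu)$. This is well defined because $\lambda_1+\mu>0$ when $\mu\ge0$, and $\bar s$ is bounded and continuous. Then $(\phi_1,\psi_1)e^{\lambda_1 t}$ solves the linearized problem and grows exponentially. Equivalently, $\lambda_1>0$ lies in the spectrum of the linearized operator, which is upper triangular with diagonal blocks $\gamma(0)\Delta+s$ and multiplication by $-\mu$. The $m$-block contributes only the spectrum $\{-\mu\}\le0$, so instability comes entirely from the $u$-block.

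The main obstacle is justifying the linearization itself, not computing eigenvalues. Because of the Fokker--Planck structure $\Delta(\gamma(m)u)$, one might worry that the cross term $\nabla\cdot(\gamma'(m)u\nabla m)$ contributes at first order. I will check explicitly that it is quadratic, being a product of $u$ and $\nabla m$, and so drops out. I will also check that the nonlocal $\bar s$ enters only as a fixed coefficient, so it causes no difficulty. Finally, "linearly unstable" will be taken to mean that the linearized operator has an eigenvalue with positive real part.
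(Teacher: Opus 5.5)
Your proposal is correct and follows the paper's argument: linearize at $(0,0)$, note the block-triangular structure with diagonal blocks $\gamma(0)\Delta+s$ and $-\mu$, and bound the principal Neumann eigenvalue from below by testing the Rayleigh quotient with $\phi\equiv 1$. The only differences are cosmetic. The paper first passes to $U=\gamma(m)u$, so its off-diagonal entry is $\alpha\bar s/\gamma(0)$ and it must transfer instability back via $U=0\iff u=0$. You instead linearize directly in $(u,m)$ and exhibit the explicit growing eigenvector $(\phi_1,\alpha\bar s\phi_1/(\lambda_1+\mu))$, which is slightly cleaner; note that the operator is lower (not upper) triangular in the ordering $(\phi,\psi)$.
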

\begin{proof}
    Let $U=\gamma(m)u$. Then 
    $U_t = \gamma(m)u_t +\gamma'(m)um_t$, so
    \begin{equation}\label{eqn:modified}
        \left\{\begin{aligned}
            & U_t = \gamma(m)\Delta U +U\left(s(x)-\frac{U}{\gamma(m)}\right) + \gamma'(m)\frac{U}{\gamma(m)}\left((\alpha \bar s(x)-m)\frac{U}{\gamma(m)}-\mu m\right), && \text{ in }\Omega,\\
            & m_t = (\alpha \bar s(x) -m)\frac{U}{\gamma(m)} -\mu m,\\
            &\nabla U\cdot \textbf{n}=0, && \text{ on }\partial\Omega\\
            &U(x,0) = U_0(x),\,m(x,0) = m_0(x) && \text{ in } \Omega.
        \end{aligned}\right.
    \end{equation}  
    Linearizing the system around the extinction equilibrium $(0,0)$ and dropping all quadratic terms gives
    \begin{equation}\label{sys:linearized}
        \partial_t\begin{pmatrix}
            U\\m
        \end{pmatrix} = \mathcal{L}\begin{pmatrix}
            U\\m
        \end{pmatrix}:=\begin{pmatrix}
            \gamma(0)\Delta+s(x)&0\\ \frac{\alpha \bar s(x)}{\gamma(0)} &-\mu
\end{pmatrix}\begin{pmatrix}
            U\\m
        \end{pmatrix}
    \end{equation}
    Since $\mathcal{L}$ is lower-triangular in block form, its spectrum is simply $\sigma(\gamma(0)\Delta+ s(x))\cup\{-\mu\}$. 
    Since $s>0$, the principal eigenvalue $\lambda_1$ of $\gamma(0)\Delta +s(x)$ under Neumann boundary condition is positive. Indeed, the Rayleigh quotient is 
    \begin{equation}\label{lambda1}
        \begin{aligned}
            \lambda_1 &= \max_{\substack{\phi\in W^{1,2}(\Omega)\\ \phi\neq 0}} \frac{-\gamma(0)\int_\Omega |\nabla \phi|^2dx +\int_\Omega  s(x)\phi^2dx}{\int_\Omega \phi^2dx}\\
            &\ge  \frac{\int_\Omega s(x)\cdot 1^2dx}{\int_\Omega 1^2dx}= \frac{1}{|\Omega|}\int_\Omega  s(x)dx>0.
        \end{aligned}
    \end{equation} $U=0$ if and only if $u=0$ by positivity of $\gamma(m)$ due to boundedness of $m$. Therefore, $(u,m)=(0,0)$ is linearly unstable.
\end{proof}

We use the Schaefer's fixed point theorem to prove the existence of positive steady-state solutions of our model \eqref{eqn:logistic}. 

\begin{theorem}[Schaefer's Fixed Point Theorem \cite{EvansPDE}]\label{thm:Schaefer}Let $X$ be a Banach space, $K \subset  X$ be a nonempty closed convex set, and $T : K \rightarrow  K$ be continuous compact mapping. If a set$$\{ u \in  K : u = \lambda Tu \text{ for some } 0 < \lambda  \leq  1\}$$ is bounded, the map $T$ has a fixed point in $K$.
\end{theorem}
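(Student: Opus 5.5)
We plan to deduce this statement from Schauder's fixed point theorem, in the form: a continuous map of a nonempty closed convex set $C$ of a Banach space into itself whose image is precompact has a fixed point. The standard device is to truncate $T$ radially so that it maps a closed ball (intersected with $K$) into itself. We assume, as in the applications below, that $0\in K$ (for instance $K$ a convex cone, or $K=X$). Then $tv\in K$ whenever $v\in K$ and $t\in[0,1]$, which is exactly what the truncation requires. We will state this explicitly, since it is needed and is implicit in the way the theorem is used.

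First, by hypothesis there is $M_0$ with $\|u\|\le M_0$ for every $u\in K$ satisfying $u=\lambda Tu$ with some $0<\lambda\le1$. Fix $M>M_0$ strictly. Define
\[
\tilde T u=\begin{cases} Tu, & \|Tu\|\le M,\\[2pt] \dfrac{M\,Tu}{\|Tu\|}, & \|Tu\|>M.\end{cases}
\]
Write $\tilde T u=\rho(Tu)$, where $\rho$ is the radial retraction onto the closed ball $\overline B_M$. Then $\tilde T$ is continuous, because $\rho$ is continuous (indeed Lipschitz). Also $\tilde T$ maps bounded sets to precompact sets: $\tilde T(S)\subset\rho(\overline{T(S)})$, and the continuous image of a compact set is compact. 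Since $0\in K$ and $K$ is convex, $M\,Tu/\|Tu\|\in K$, so $\tilde T$ maps $C:=K\cap\overline B_M$ into itself. Moreover $C$ is nonempty (it contains $0$), closed, convex and bounded. Schauder's theorem therefore gives a fixed point $u=\tilde T u\in C$.

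It remains to rule out the truncated branch. If $\|Tu\|\le M$, then $u=Tu$ and we are done. Otherwise $u=\lambda Tu$ with $\lambda=M/\|Tu\|\in(0,1)$. Then $u$ lies in the set assumed to be bounded, so $\|u\|\le M_0<M$. On the other hand, $\|u\|=\|\rho(Tu)\|=M$, a contradiction. Hence $T$ has a fixed point in $K$.

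The only delicate points are bookkeeping. First, one must choose $M$ strictly larger than the a priori bound to get the contradiction. Second, one must check that compactness of $T$ transfers to $\tilde T$ on the bounded set $C$; for this we use that $T$ is compact in the sense that it maps bounded sets to precompact sets, as in \cite{EvansPDE}. Third, one must ensure the truncation stays inside $K$, which is where $0\in K$ enters.
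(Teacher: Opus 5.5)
Your proof is correct. It is the standard radial-truncation reduction to Schauder's theorem from the cited source \cite{EvansPDE}, which the paper invokes without giving a proof. You are also right that $0\in K$ must be assumed: without it the statement as written is false (take $K=\{(x,y)\in\mathbb{R}^2: y\ge 1\}$ and $T(x,y)=(x+1,y)$, where the $\lambda$-set is empty yet $T$ has no fixed point), whereas in the paper's application $K$ is the cone of nonnegative functions, so the hypothesis holds.
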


For the uniqueness of the steady-state, we use the following lemma.
\begin{lemma}\label{lem:BO}
Let $\Omega\subset\mathbb{R}^n$ be a bounded domain with $C^{2}$ boundary and let
$f:\Omega\times(0,\infty)\to\mathbb{R}$ be a Carath\'eodory function, i.e.,
$x\mapsto f(x,u)$ is measurable for every $u>0$ and $u\mapsto f(x,u)$ is
continuous for a.e.\ $x\in\Omega$.
Assume that $f(\cdot,u(\cdot))\in L^2$ and for a.e.\ $x\in\Omega$ the map
\[
u\mapsto \frac{f(x,u)}{u}
\quad\text{is strictly decreasing on }(0,\infty).
\]
Then the semilinear Neumann problem
\begin{equation}\label{eqn: BO_neu}
\left\{\begin{aligned}
&\Delta u + f(x,u)=0 && \text{in }\Omega,\\
&\nabla u\cdot \boldsymbol{n} = 0 &&  \text{on }\partial\Omega
\end{aligned}\right.
\end{equation}
admits at most one weak solution $u\in H^{1}(\Omega)$ satisfying $u>0$ a.e. in $\Omega$.
In particular, \eqref{eqn: BO_neu} has at most one positive classical solution
$u\in C^{2}(\bar\Omega)$.
\end{lemma}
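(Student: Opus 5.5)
This is the Brezis--Oswald uniqueness argument, carried out via the Díaz--Saa / Picone-type identity adapted to Neumann conditions. Let $u_1,u_2\in H^1(\Omega)$ be two weak solutions of \eqref{eqn: BO_neu} with $u_1,u_2>0$ a.e. The idea is to test the equation for $u_1$ with $(u_1^2-u_2^2)/u_1$ and the equation for $u_2$ with $(u_2^2-u_1^2)/u_2$, then add the two identities.

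Because the boundary condition is homogeneous Neumann, integration by parts produces no boundary terms. Summing gives
\[
\int_\Omega \nabla u_1\cdot\nabla\Big(\frac{u_1^2-u_2^2}{u_1}\Big)+\nabla u_2\cdot\nabla\Big(\frac{u_2^2-u_1^2}{u_2}\Big)\,dx
=\int_\Omega\Big(\frac{f(x,u_1)}{u_1}-\frac{f(x,u_2)}{u_2}\Big)(u_1^2-u_2^2)\,dx .
\]
A direct expansion shows the left side equals
\[
\int_\Omega \Big|\nabla u_1-\tfrac{u_1}{u_2}\nabla u_2\Big|^2+\Big|\nabla u_2-\tfrac{u_2}{u_1}\nabla u_1\Big|^2\,dx\ \ge 0 .
\]
The right side is $\le 0$ pointwise, since $u\mapsto f(x,u)/u$ is strictly decreasing, and it vanishes only where $u_1=u_2$. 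Hence both sides are zero. Strict monotonicity then forces $u_1=u_2$ a.e. on the set where the integrand of the right side is nonzero, that is, everywhere.

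The main obstacle is justifying the test functions. Since $u_i$ may approach $0$ and need not be bounded, $(u_1^2-u_2^2)/u_1$ need not lie in $H^1$. I would regularize in two steps.

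\begin{itemize}
\item Replace $u_i$ by $u_{i,\eps}=u_i+\eps$ in the denominators, and truncate the numerators at level $k$, e.g. $\min(u_i,k)$.
\item Check that the resulting test functions, such as $(T_k u_1^2 - T_k u_2^2)/(u_1+\eps)$, are in $H^1\cap L^\infty$. Here one uses that $\nabla$ of a quotient is controlled by $\eps^{-1}$ and the truncation level.
\end{itemize}

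After regularization, the gradient side still has a nonnegative Picone-type structure up to error terms. Pass $\eps\to0$ using monotone/dominated convergence with the hypothesis $f(\cdot,u_i)\in L^2$, then let $k\to\infty$ using Fatou's lemma on the nonnegative parts. If the unbounded case becomes too delicate, I would first show $u_i\in L^\infty$ by Moser iteration, or simply note that this step is only needed in the classical case.

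For the final statement, the classical case is immediate. A positive $u\in C^2(\bar\Omega)$ is bounded above and below by positive constants on the compact set $\bar\Omega$. The quotients above are then directly in $C^1(\bar\Omega)\subset H^1$, so no regularization is needed, and the identity plus sign argument gives uniqueness at once.
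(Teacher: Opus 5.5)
Your proposal is correct and is essentially the paper's argument: the paper only invokes the Brezis--Oswald uniqueness proof (testing with $(u_1^2-u_2^2)/u_1$ and $(u_2^2-u_1^2)/u_2$ and using the Picone-type identity), notes that the homogeneous Neumann condition produces no boundary terms, and deduces the classical case from the weak one, so your write-up supplies more than the paper does. The one step to tighten is removing the regularization: dominated convergence in $\eps$ is not available there, so use reverse Fatou with the $\eps$- and $k$-independent majorant $|f(x,u_1)|(u_1+1)+|f(x,u_2)|(u_2+1)$ for the right-hand integrand (monotonicity of $f(x,u)/u$ shows it can be positive only where both $f(x,u_i)>0$), and drop the Moser fallback, since positive weak solutions need not be bounded without a growth condition on $f$.
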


\begin{proof}
In \cite{Brezis1986}, they showed the uniqueness of the weak solution to the Dirichlet boundary problem
\begin{equation*}
\left\{\begin{aligned}
&\Delta u + f(x,u)=0 && \text{in }\Omega,\\
&u = 0 &&  \text{on }\partial\Omega.
\end{aligned}\right.
\end{equation*}
Since the Dirichlet boundary condition was only used to cancel boundary terms in the
integration by parts in the paper, one can follow the similar argument to obtain the uniqueness of a positive weak solution to the Neumann boundary problem \eqref{eqn: BO_neu}. Since any $C^{2}(\overline\Omega)$ solution satisfies the weak
formulation, the uniqueness also holds in the class of positive classical
solutions.
\end{proof}

\begin{theorem}
    Let $0<\theta<1$ and $\mu>0$. Let $\Omega\subset \mathbb{R}^n$ be a bounded domain with $C^{2}$ boundary. Assume that \eqref{hyp: gamma} holds and $s\in C^{2+\theta}(\overline{\Omega})$ and $s>0$ in $\overline \Omega$. Then there exists a positive steady-state solution $(u,m)$ to \eqref{eqn:logistic} if
    \begin{equation}\label{exist cond:gamma}
     \sup_{z\in [0,M]}\left|\frac{\gamma'(z)}{\gamma(z)}\right|<\frac{4}{M},
    \end{equation} where $M=\alpha\|\bar s\|_\infty$. In addition, the solution exists uniquely if 
    \begin{equation}
        \label{uniq cond:gamma}
        \sup\limits_{[0,M]}\left|\frac{\gamma'(z)}{\gamma(z)}\right|\le \frac{\mu}{M\|s\|_\infty}
    \end{equation}
\end{theorem}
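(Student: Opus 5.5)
At a steady state, the $m$-equation is algebraic and gives $m=\alpha\bar s\,\frac{u}{u+\mu}=:h(x,u)$. This requires $\mu>0$ to avoid degeneracy at $u=0$, and it ensures $0\le m<\alpha\bar s\le M$. The steady-state problem then reduces to a single scalar elliptic equation. As in the proof of the instability lemma, I would pass to the variable $U=\gamma(m)u$, which turns the Neumann condition into $\nabla U\cdot\mathbf n=0$ and the equation into
\[
\Delta U + \frac{U}{\gamma(m)}\Big(s(x)-u\Big)=0,\qquad u=\frac{U}{\gamma(h(x,u))} .
\]
The first step is to show that the relation $U=\gamma(h(x,u))u=:\Phi(x,u)$ can be inverted in $u$, so that $u=u(x,U)$. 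We have
\[
\partial_u\Phi=\gamma(m)+\gamma'(m)\,\alpha\bar s\,\frac{\mu u}{(u+\mu)^2},\qquad \frac{\mu u}{(u+\mu)^2}\le\frac14 .
\]
Hence
\[
\frac{\partial_u\Phi}{\gamma(m)}\ge 1-\frac{M}{4}\sup_{[0,M]}\Big|\frac{\gamma'}{\gamma}\Big|>0
\]
precisely under \eqref{exist cond:gamma}. This is where the constant $4/M$ comes from. Thus $u\mapsto\Phi(x,u)$ is a $C^1$ increasing bijection of $[0,\infty)$, because $\Phi\ge\gamma(M)u\to\infty$. The problem becomes $\Delta U+f(x,U)=0$ with Neumann data, where
\[
f(x,U)=\frac{U}{\gamma(m)}\big(s-u(x,U)\big)=u(x,U)\big(s-u(x,U)\big).
\]

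For existence I would apply Schaefer's theorem (Theorem \ref{thm:Schaefer}) on $X=C^{1}(\overline\Omega)$ (or $C^\theta$), with $K$ the cone of nonnegative functions. The map is $T(V)=W$, where $W$ solves
\[
-\Delta W+\beta W=\beta V^+ + f(x,V^+)
\]
with Neumann data, for some constant $\beta>0$ large enough that the right-hand side is increasing in $V$ on bounded sets; this keeps $T(K)\subset K$ by the maximum principle. Continuity and compactness follow from elliptic $W^{2,p}$ estimates and compact embedding. The a priori bound for $V=\lambda TV$ comes from the maximum principle: at a maximum point of $U$ one gets $f\ge0$, so $u\le\|s\|_\infty$ and $U\le\gamma(0)\|s\|_\infty$, uniformly in $\lambda$. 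The main obstacle is excluding the trivial fixed point $U\equiv0$. I would handle it by the instability lemma: since $\lambda_1>0$ from \eqref{lambda1}, I would either use a degree/index argument on $K$ minus a small ball (the index at $0$ is $0$ because of the positive principal eigenvalue, while the index on a large ball is $1$), or, more simply, use the strict sub/supersolution pair $\underline U=\varepsilon\phi_1$, $\overline U=\gamma(0)\|s\|_\infty$. Here $\phi_1>0$ is the principal eigenfunction of $\gamma(0)\Delta+s$. For small $\varepsilon$, $\varepsilon\phi_1$ is a subsolution of the $U$-equation, after absorbing the factor $\gamma(m)$ versus $\gamma(0)$ by dividing the equation by $\gamma(m)$ in the form
\[
\Delta U + \frac{f(x,U)}{\gamma(m)}=0 .
\]
I would then restrict $K$ to the order interval $[\underline U,\overline U]$, which remains convex and closed. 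Positivity of $U$ gives $u>0$, and $m=h(x,u)>0$.

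For uniqueness I would invoke Lemma \ref{lem:BO}, written for the equation $\Delta U+\tilde f(x,U)=0$ with $\tilde f=f/\gamma(m)$. It then suffices that
\[
\frac{\tilde f(x,U)}{U}=\frac{s-u(x,U)}{\gamma(m)^{2}}\cdot\gamma(m)=\frac{s-u}{\gamma(h(x,u))}
\]
be strictly decreasing in $U$. Since $u$ is increasing in $U$, it is enough to show that $\psi(u)=(s-u)/\gamma(h(x,u))$ is decreasing in $u$. Differentiating gives
\[
\gamma\,\psi'=-1-(s-u)\,\frac{\gamma'}{\gamma}\,\partial_u h ,\qquad \partial_u h=\alpha\bar s\,\frac{\mu}{(u+\mu)^2}\le\frac{M}{\mu}.
\]
Also $|s-u|\le\|s\|_\infty$ on the relevant range $0<u\le\|s\|_\infty$ from the a priori bound. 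Condition \eqref{uniq cond:gamma} then gives
\[
\Big|(s-u)\frac{\gamma'}{\gamma}\,\partial_u h\Big|\le \|s\|_\infty\cdot\frac{\mu}{M\|s\|_\infty}\cdot\frac{M}{\mu}=1 .
\]
This yields $\psi'\le0$, and strictness follows after noting that equality cannot hold on a set of positive measure. To avoid relying on bounds valid only on the solution range, I would make $\psi$ strictly decreasing globally by modifying $f$ for $u>\|s\|_\infty$; all positive solutions lie below this level, so the modification is harmless. Uniqueness of $U$ then gives uniqueness of $u=u(x,U)$ and of $m=h(x,u)$.
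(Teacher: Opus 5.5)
Your argument is sound in structure, and two of its three pieces coincide with the paper's. First, the condition $\sup|\gamma'/\gamma|<4/M$ is used exactly as in the paper to make $u\mapsto\gamma(h(x,u))u$ increasing. The paper phrases this as $\partial_m F>0$ and solves for $m=m(U)$, which is equivalent. Second, uniqueness comes, as in the paper, from the Brezis--Oswald lemma applied to the $U$-equation, after reducing to monotonicity of $u\mapsto(s-u)/\gamma(h(x,u))$.

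The existence step takes a genuinely different route. The paper freezes the coefficient $\kappa(\tilde U)=\gamma(m(\tilde U))$ and lets $T(\tilde U)$ be the unique \emph{positive} solution of the frozen logistic problem $\Delta U+\frac{U}{\kappa}\bigl(s-\frac{U}{\kappa}\bigr)=0$. Its constant barriers $\varepsilon$ and $\gamma(0)\|s\|_\infty$ are uniform in $\tilde U$. Positivity is therefore built into the map, and $U\equiv0$ can never be a fixed point. The cost is that one must justify unique solvability of the frozen problem and continuous dependence on $\tilde U$. You instead treat $\Delta U+f(x,U)=0$ as a single semilinear problem and run the classical sub/supersolution scheme on the order interval $[\varepsilon\phi_1,\gamma(0)\|s\|_\infty]$ with a shifted linear solve. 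Continuity and compactness are then immediate, and positivity comes from the lower barrier.

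Three details to tidy in your version:
\begin{itemize}
\item Invariance $T(K)\subset K$ fails on the whole cone. Since $f\le su-u^2$ and $u\ge V/\gamma(0)$, you get $\beta V+f(x,V)\to-\infty$. So you must work on the order interval from the start, and there Schauder's theorem applies directly without the $\lambda$-homotopy.
\item No division by $\gamma(m)$ is needed. Your $f=u(s-u)$ is already the steady-state nonlinearity, and its linearization at $U=0$ is $\gamma(0)^{-1}(\gamma(0)\Delta+s)$. So $\varepsilon\phi_1$ is a subsolution as it stands; in fact even the constant $\varepsilon$ works, since $s>0$.
\item Drop the $\tilde f=f/\gamma(m)$ in your uniqueness paragraph. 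It would give $\tilde f/U=(s-u)/\gamma^2$, whereas the quantity you actually check, $(s-u)/\gamma(h)$, is $f/U$, which is the correct one.
\end{itemize}

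The one step that does not hold as written is the restriction to $0<u\le\|s\|_\infty$ in the uniqueness argument. The maximum principle gives $f\ge0$ only at a maximum point of $U$. Hence $U\le\gamma(0)\|s\|_\infty$, and globally you only get $u\le\frac{\gamma(0)}{\gamma(M)}\|s\|_\infty$. So the claim that every positive solution lies below $\|s\|_\infty$, on which your truncation relies, is unjustified.

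The truncation is unnecessary, however. Because $\gamma'\le0$ and $\partial_u h>0$, for $u\ge s(x)$ the term $-(s-u)\frac{\gamma'}{\gamma}\partial_u h$ is nonpositive, so $\gamma\psi'\le-1$. For $0<u<s(x)$ you have $0<s-u<\|s\|_\infty$. Combined with $\partial_u h=\alpha\bar s\mu/(u+\mu)^2<M/\mu$ for $u>0$, this gives $\psi'<0$ for every $u>0$. It also replaces your vague ``equality cannot hold on a set of positive measure''. This is precisely what the paper does by estimating $-(s-u)\gamma'\le -s\gamma'$ for $u\ge0$.
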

\begin{proof}
    The steady-state $(U,m)$ of \eqref{eqn:modified} satisfies $U_t=m_t=0$ and 
    \begin{equation}\label{eqn:SS logistic}
        \left\{\begin{aligned}
            & 0 = \Delta U +\frac{U}{\gamma(m)}\left(s(x)-\frac{U}{\gamma(m)}\right), && \text{ in }\Omega,\\
            & 0 = (\alpha \bar s(x) -m)\frac{U}{\gamma(m)} -\mu m, && \text{ in }\Omega\\
            &\nabla U\cdot \textbf{n}=0, && \text{ on }\partial\Omega
        \end{aligned}\right.
    \end{equation} From the second equation, we have $m=\alpha\bar s\frac{U}{U+\mu\gamma(m)}$. Define a map $F$ by $$F(x,m,U)=m-\frac{\alpha\bar s U}{U+\mu\gamma(m)}.$$ Then
    \begin{align*}
        \partial_m F(x,m,U) = 1 +\alpha\bar s \frac{U\mu\gamma'(m)}{(U+\mu\gamma(m))^2}\ge 1+\alpha\bar s \frac{\gamma'(m)}{4\gamma(m)}>0,
    \end{align*} where we have used $\frac{U}{(U+c)^2}\le \frac{1}{4c}$ for $c>0$ and the last inequality comes from \eqref{exist cond:gamma}. By the implicit function theorem, we can write $m=m(U)$ and get a semilinear equation for $U$: 
    \begin{equation*}
    \left\{\begin{aligned}
         & 0 = \Delta U +\frac{U}{\gamma(m(U))}\left(s(x)-\frac{U}{\gamma(m(U))}\right), && \text{ in }\Omega,\\
            &\nabla U\cdot \textbf{n}=0, && \text{ on }\partial\Omega.
    \end{aligned}\right.
    \end{equation*} 

    Let $X=C^{0,\theta}(\overline{\Omega})$ and let us define a closed convex subset $K\subset X$ by
    $$K=\{f\in X : f\ge 0 \text{ in } \overline{\Omega}\}.$$
    
    Choose $\tilde U\in K$.
    Denote $\kappa(\tilde U) = \gamma(m(\tilde U))$.
    Consider the auxiliary equation for $U$:
    \begin{equation}\label{eqn:SS aux}
    \left\{\begin{aligned}
         & 0 = \Delta U +\frac{U}{\kappa(\tilde U)}\left(s(x)-\frac{U}{\kappa(\tilde U)}\right), && \text{ in }\Omega,\\
            &\nabla U\cdot \textbf{n}=0, && \text{ on }\partial\Omega.
    \end{aligned}\right.
    \end{equation} 
    The standard comparison method ensures the existence of the unique solution $U$ to \eqref{eqn:SS aux}. Indeed, if we choose $0<\eps<(\min_{\overline{\Omega}} s(x))\gamma(M)$, then $\underline U=\eps$ satisfies
    \[\Delta \underline U + \frac{s}{\kappa(\tilde U)}\underline U - \frac{1}{\kappa(\tilde U)^2}\underline U^2 = \frac{s}{\kappa(\tilde U)}\eps - \frac{1}{\kappa(\tilde U)^2}\eps^2= \frac{s\eps}{\kappa(\tilde U)}\left(1-\frac{\eps}{s\kappa(\tilde U)}\right)\ge 0,  \] hence $\underline U=\eps$ is a subsolution. Similarly, a constant $\overline U = \|s\|_\infty\gamma(0)$ is a supersolution. By the standard iteration method, there exists a solution $U\in C^{2+\theta}(\overline{\Omega})$ to \eqref{eqn:SS aux}, and the solution is uniformly bounded by $\|s\|_\infty\gamma(0)$. Positivity of $U$ follows from the maximum principle and Hopf's lemma. Uniqueness of the positive $U$ is ensured due to the convexity of the logistic term in terms of $U$.  
    Define the mapping $T:K\to K$ by
     \begin{equation*}
        T(\tilde U) = U.
    \end{equation*} 

    \textbf{(Compactness of $T$)} Suppose that $B\subset K$ is bounded in $X$.
    We claim that $T(B)$ is precompact in $K$. Pick $\tilde u\in B$. Then $$T(\tilde U) =U \le \overline{U}=\|s\|_\infty\gamma(0).$$ Choose $p>n$. The Neumann $W^{2,p}$ estimate gives
    \begin{align*}
        \|U\|_{W^{2,p}(\Omega)}&\le C\left(\|U\|_{L^{p}(\Omega))}+\left\|\frac{s}{\kappa(\tilde U)} U - \frac{1}{\kappa(\tilde U)^2}U^2\right\|_{L^{p}(\Omega)}\right)\\
        &\le C=C(\Omega, s,\gamma).
    \end{align*}  By using the Sobolev embedding, we have for $\delta=1-\frac{n}{p}\in (0,1)$ 
    \[\|U\|_{C^{1+\delta}(\overline\Omega)}\le C=C(\Omega,s,\gamma,p). \] Therefore, $U$ is also uniformly bounded in $C^{0,\theta}(\overline{\Omega})$. 
    The Neumann Schauder estimate yields
    \begin{align}\label{ineq:Neumann Schauder}
        \|U\|_{C^{2+\theta}(\overline \Omega)}\le C\left(\|U\|_{C(\overline\Omega))}+\left\|\frac{s}{\kappa(\tilde U)} U - \frac{1}{\kappa(\tilde U)^2}U^2\right\|_{X}\right)\\
        \le C_1(1+[\tilde U]_{X}),
    \end{align} where we have used the $C^1$ regularity of $\gamma$ and the $C^{0,\alpha}$ regularity of the map  $m=m(\tilde U) = \alpha\bar s \frac{\tilde U}{\tilde U+\mu}$. Since $\tilde U\in B$, $\|U\|_{C^{2+\theta}(\overline \Omega)}$ is uniformly bounded when $\tilde U\in B$. By the Sobolev embedding, $T(B)$ is relatively compact in $K$. 
    
    \textbf{(Continuity of $T$)} Suppose that $\tilde U_n\to \tilde U$ in $K$ as $n\to\infty$. Due to the regularity of $\kappa$, one can deduce that $\kappa(\tilde U_n)\to\kappa(\tilde U)$ in $X$ as $n\to\infty$. Note that $\tilde U_n$ is bounded in $K$. The Neumann Schauder estimate \eqref{ineq:Neumann Schauder} gives the uniform bound of $T(\tilde U)$ in $C^{2,\theta}(\overline{\Omega})$. Let $0<\beta <\theta$. Then by the Sobolev embedding $C^{2,\theta}(\overline{\Omega})\hookrightarrow C^{2,\beta}(\overline{\Omega})$, we can choose a convergent subsequence $\{v_{n_k}\}_k$ of $v_{n} = T(\tilde U_n)$, say $v_{n_k}\to \hat U$ in $C^{2,\beta}(\overline{\Omega})$ as $n\to \infty$. Then $\Delta \tilde U_{n_k}\to \Delta \hat U$ in $C(\overline{\Omega})$, hence
    $\hat U$ satisfies \eqref{eqn:SS aux}. On the other hand, $T(\tilde U)$ also solves \eqref{eqn:SS aux}. The uniqueness of its solution concludes that $T(\tilde U)=\hat U=\lim_{n\to\infty}T(\tilde U_n)$ in $X$.

    \textbf{(Boundedness of $S$ in $K$)} We define a subset
    $S\subset K$ by
    \begin{equation*}
        S = \{U\in K: U=\lambda T(U) \text{ for some }\lambda \in (0,1]\}.
    \end{equation*} We claim that $S$ is bounded in $K$.
    Since $U=\lambda T(U)$ for $U\in S$ for $\lambda\in (0,1]$, it suffices to prove that $V:=T(U)$ is uniformly bounded in $K$.
     We apply the comparison principle and the Neumann $W^{2,p}$ estimate as we did above:
    \begin{align*}
        \|V\|_{W^{2,p}(\Omega)}&\le C\left(\|U\|_{L^{p}(\Omega))}+\left\|\frac{s}{\kappa(U)} V - \frac{1}{\kappa(U)^2}V^2\right\|_{L^{p}(\Omega)}\right)
    \end{align*}  The comparison principle gives $V\le \overline{U}=\|s\|_\infty\gamma(0)$, so $\frac{V}{\kappa(U)}\le \frac{\overline{U}}{\gamma(M)}$. Therefore, $\|V\|_{W^{2,p}(\Omega)}$ is uniformly bounded in $W^{2,p}(\Omega)$. By applying the Sobolev embedding theorem again, we obtain the boundedness of $V$ in $K$.

    We apply the Schaefer's fixed point theorem \ref{thm:Schaefer} to obtain a fixed point $U\in K$ such that $U=T(U)$. Denote $u=U/\gamma(m(U))$. Then $u$ is positive and solves 
    \begin{equation*}
    \left\{\begin{aligned}
        &0=\Delta(\gamma(m)u) + u(s-u) && \text{ in }\Omega\\
        &0=(\alpha\bar s-m)u-\mu m && \text{ in }\Omega\\
        &\nabla(\gamma(m)u)\cdot \boldsymbol{n}=0 && \text{ on }\partial\Omega.
    \end{aligned}\right.
    \end{equation*}

    \textbf{(Uniqueness)} Since \eqref{eqn:SS logistic} is of the form of the semilinear elliptic equation \eqref{eqn: BO_neu}, it suffices to show for the uniqueness that the map
    \begin{equation*}
        U\mapsto \frac{1}{\gamma(m(U))}\left(s-\frac{U}{\gamma(m(U))}\right)
    \end{equation*} is strictly decreasing. Note that $U=\gamma(m(u))u$ and 
    \begin{equation*}
        \frac{\partial U}{\partial u} = \gamma(m(u)) + \gamma'(m(u))\frac{\alpha\bar s \mu u}{(u+\mu)^2}\ge \gamma(m(u)) + \gamma'(m(u))\frac{\alpha\bar s }{4}>0
    \end{equation*} due to the inequality $\frac{u\mu}{(u+\mu)^2}\le \frac{1}{4}$ and \eqref{exist cond:gamma}. Therefore, it suffice to show that the map
    \begin{equation*}
        u\mapsto \frac{s-u}{\gamma(m(u))}
    \end{equation*} is strictly decreasing. The derivative of the map with respect to $u$ is given by
    \begin{equation*}
        \frac{1}{\gamma(m(u))^2}\left( -\gamma(m(u))-(s-u)\gamma'(m(u))\frac{\alpha \bar s \mu}{(u+\mu)^2}\right).
    \end{equation*}
    Equation \eqref{uniq cond:gamma} yields
    \begin{align*}
        -\gamma(m) -\gamma'(m)\frac{\alpha \bar s s\mu}{(u+\mu)^2}&\le -\gamma(m) +\left(\gamma(m)\frac{\mu}{M \|s\|_\infty}\right)\frac{\alpha \bar s s\mu}{(u+\mu)^2}\\
        &=\gamma(m)\left(-1 + \frac{\alpha\bar s s \mu^2}{M\|s\|_\infty (u+\mu)^2}\right)\\
        &<0
    \end{align*}
    which concludes the uniqueness of the positive solution to \eqref{eqn:SS logistic}.

\end{proof}

\begin{remark} 
    The condition \eqref{uniq cond:gamma} on $\gamma$ is a technical sufficient condition used to obtain uniqueness; the argument is not expected to be optimal, and a sharper proof might establish uniqueness of the steady state under weaker assumptions. Nevertheless, it is noteworthy that we can still guarantee a unique positive steady-state solution in the presence of the nonlinear diffusion term $\Delta(\gamma(m)u)$ driven by the cognitive map. This suggests that the uniqueness result  \eqref{eqn:constantgamma} in \cite{Cantrell1989} could potentially be extended to the diffusive--logistic model with memory-based diffusion considered here.
\end{remark}

\subsection{Lingering phenomenon with population growth}
We have analytically investigated the population persistence via existence theory and long-time behavior analysis. In this subsection, we present a partial analytical result of the lingering phenomenon for the model with logistic growth \eqref{eqn:logistic}. Additionally,  we observe the lingering through numerical simulation results of the population growth model \eqref{eqn:logistic}.

Let $\Omega = (-\ell,\ell)$ and $s$ is even in $(-\ell,\ell)$ and decreasing in $(0,\ell)$. 
Let $Q$ be a reflection operator centered at $x=0$. Note that $Qs =s$, so $\bar Qs = \bar s$. Then 
\begin{align*}
    &Q\left[(\gamma(m)u)'' +u(s-u)\right]= (\gamma(Qm)Qu)'' +Qu(s-Qu), \text{ and }\\ 
    & Q[(\alpha\bar s -m)u -\mu m] = (\alpha \bar s -Qm)Qu -\mu Qm.
\end{align*} Therefore, $(u,m)$ and $(Qu,Qm)$ satisfy the steady-state problem 
\begin{equation}\label{eqn: SteadyState}
    \left\{\begin{aligned}
        &0 = (\gamma(\phi)v)''+ v (s - v)\\
        &\phi = \alpha \bar s \frac{v}{v+\mu}\\
        & (\gamma(\phi)v)'(\pm \ell) =0.
    \end{aligned}\right.
\end{equation} Due to the uniqueness of the steady-state solution, it follows that $u=Qu$ and $m=Qm$, that is, $u$ and $m$ are symmetric at $x=0$. 

For small $\alpha$, we will prove that the steady-state solution of \eqref{eqn: SteadyState} behaves similarly to the steady state in the constant-$\gamma$ case (i.e. the steady-state solution of \eqref{eqn:constantgamma}). The following Lemma~\ref{lem: even SS} provides a key monotonicity property of the steady-state solution to \eqref{eqn:constantgamma} (constant $\gamma$) near the boundary. A similar result was obtained by \cite{Gidas1979} under Dirichlet boundary conditions. In the Neumann case considered here, we use a different argument to establish the monotonicity.

\begin{lemma}\label{lem: even SS}
Let $\gamma_0,\ell>0$, and let $s\in C^1([-\ell,\ell])$ be even and strictly decreasing on $(0,\ell)$.
Suppose that $v\in C^2([-\ell,\ell])$ is the unique positive solution of
\begin{equation}\label{eqn: semilinear elliptic}
\gamma_0v''+v(s-v)=0 \quad\text{in }(-\ell,\ell),\qquad v'(-\ell)=v'(\ell)=0.
\end{equation}
Then $v$ is even and nonincreasing on $[0,\ell]$. In particular, $v$ attains its maximum at $x=0$.
\end{lemma}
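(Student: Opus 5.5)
Evenness follows from uniqueness. If $v$ solves \eqref{eqn: semilinear elliptic}, then $Qv(x)=v(-x)$ solves the same problem, because $s$ is even and the Neumann conditions are exchanged at $\pm\ell$. Since the positive solution is unique (Lemma~\ref{lem:BO}, with $f(x,v)=v(s-v)/\gamma_0$ and $f/v$ strictly decreasing), we get $Qv=v$. In particular $v'(0)=0$, and together with $v'(\ell)=0$ this lets us work on $[0,\ell]$ alone, with $v'(0)=v'(\ell)=0$.

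For monotonicity I would argue by contradiction. Suppose $v'(x_1)>0$ for some $x_1\in(0,\ell)$. Let $(a,b)\ni x_1$ be the maximal subinterval of $(0,\ell)$ on which $v'>0$. Then $v'(a)=v'(b)=0$, where $a\ge 0$ and $b\le\ell$ are either interior critical points or the endpoints, at which $v'=0$ is known. Differentiate the equation and set $w=v'$. This gives
\[
\gamma_0 w''+(s-2v)w+s'v=0 \quad\text{on }(a,b).
\]
Here $s'v<0$ on $(0,\ell)$, since $s$ is strictly decreasing and $v>0$. Now use $v'$ itself as a test function, or compare with the linearization of the original equation. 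Multiply the $w$-equation by $v$ and the original equation (written as $\gamma_0v''+(s-v)v=0$) by $w$, subtract, and integrate over $(a,b)$:
\[
\gamma_0\big[v w'-w v'\big]_a^b-\int_a^b v\,w^2\,dx+\int_a^b s' v^2\,dx=0 .
\]
The term $wv'=w^2$ vanishes at $a$ and $b$, so the boundary term reduces to $\gamma_0\big(v(b)w'(b)-v(a)w'(a)\big)$. Because $w>0$ on $(a,b)$ and $w(a)=w(b)=0$, we have $w'(a)\ge0$ and $w'(b)\le0$, so the boundary term is $\le 0$. The two integrals are strictly negative. The left side is therefore strictly negative, which is a contradiction. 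Hence $v'\le0$ on $[0,\ell]$, and by evenness the maximum of $v$ is attained at $0$.

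The main obstacle is making sure the boundary terms have the right sign at the endpoints $a=0$ or $b=\ell$. At those points we only know $v'=0$ (by symmetry and the Neumann condition) and not the sign of $v''$. The sign of $w'$ still follows from $w>0$ inside and $w=0$ at the endpoint, which requires only $v\in C^2$ up to the boundary. That regularity holds because $s\in C^1$ implies $v\in C^3$. If the sign argument at degenerate endpoints becomes delicate, I would instead note that at a point where $w=w'=0$ the equation forces $\gamma_0w''=-s'v>0$, and use that to rule out touching.
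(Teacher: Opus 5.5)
Your proof is correct, but it reaches the contradiction by a different route from the paper's.

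Both arguments share the setup:
\begin{itemize}
\item evenness follows from uniqueness;
\item you take a maximal interval $(a,b)\subset(0,\ell)$ on which $w=v'>0$, with $w(a)=w(b)=0$ (using $v'(0)=0$ and $v'(\ell)=0$);
\item differentiating the equation gives $\gamma_0 w''+(s-2v)w=-s'v$.
\end{itemize}

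The paper then argues pointwise at an interior maximum $x_*$ of $w$. There $v''(x_*)=w'(x_*)=0$, so the original equation forces $v(x_*)=s(x_*)$. This turns the sign-indefinite coefficient $s-2v$ into $-s$ and gives $\gamma_0 w''(x_*)=s(x_*)\bigl(w(x_*)-s'(x_*)\bigr)>0$, contradicting $w''(x_*)\le 0$. The paper's preliminary claim that $v'\le 0$ somewhere is not actually needed for this.

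You instead multiply the $w$-equation by $v$, multiply the original equation by $w$, and subtract. This removes the indefinite coefficient and leaves a sign-definite integrand. You then put the sign information at the endpoints, $w'(a)\ge 0\ge w'(b)$, rather than at an interior maximum.

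Two small corrections, neither of which affects the conclusion:
\begin{itemize}
\item The middle integral is $-\int_a^b v^2 w\,dx$, not $-\int_a^b v\,w^2\,dx$, because $(s-2v)vw-(s-v)vw=-v^2w$. It is still strictly negative.
\item Strict monotonicity of $s$ gives only $s'\le 0$, not $s'v<0$ pointwise. This is harmless because the $v^2w$ term already supplies strictness. It does make your fallback ``$\gamma_0 w''=-s'v>0$'' unreliable, but you do not need it. The inequalities $w'(a)\ge 0\ge w'(b)$ follow directly from $w(a)=w(b)=0$, $w>0$ in between, and $w\in C^1([-\ell,\ell])$, including when $a=0$ or $b=\ell$.
\end{itemize}

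Your identity is $v^2$ times the pointwise relation $\gamma_0(v''/v)'=v'-s'$, integrated over $(a,b)$. So it can be collapsed to a one-line comparison: $\gamma_0\bigl(v''(b)/v(b)-v''(a)/v(a)\bigr)=\bigl(v(b)-v(a)\bigr)-\bigl(s(b)-s(a)\bigr)>0$, against $v''(b)\le 0\le v''(a)$. That version needs only $v\in C^2$. The paper's argument is equally short and purely local, but it uses $v\in C^3$ to evaluate $w''(x_*)$.
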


\begin{proof}
We first show that $v$ is even. Define $\tilde v(x):=v(-x)$. Since $s$ is even, $\tilde v$ satisfies
\[
\gamma_0\tilde v''+\tilde v(s-\tilde v)=0 \quad\text{in }(-\ell,\ell),\qquad 
\tilde v'(-\ell)=\tilde v'(\ell)=0,
\]
and $\tilde v>0$ on $[-\ell,\ell]$. By the uniqueness of the positive solution, we conclude that
$v\equiv \tilde v$, hence $v$ is even.

Next, we claim that there exists $x\in(0,\ell)$ such that $v'(x)\le 0$.
Indeed, suppose by contradiction that $v'(x)>0$ for all $x\in(0,\ell)$, so that $v$ is strictly increasing on $(0,\ell)$.
Since $v$ is even, it follows that $v$ attains its minimum at $x=0$, hence $v'(0)=0$ and $v''(0)\ge 0$.
Evaluating \eqref{eqn: semilinear elliptic} at $x=0$ gives
\[
\gamma_0 v''(0)=-v(0)\bigl(s(0)-v(0)\bigr)\ge 0,
\]
and therefore $v(0)\ge s(0)$.
Similarly, since $v$ attains its maximum at $x=\ell$ and $v'(\ell)=0$, we have $v''(\ell)\le 0$, and thus
$v(\ell)\le s(\ell)$.
This is impossible because
\[
v(\ell)\le s(\ell)<s(0)\le v(0),
\]
while $v(0)$ is the minimum of $v$. 

Now set $w:=v'$ and define
$$
E:=\{x\in(0,\ell): w(x)>0\}.
$$
Then $E$ is open by continuity of $w$. If $E$ were nonempty, take any $x_1\in E$ and let $(a,b)\subset (0,\ell)$
be the connected component of $E$ containing $x_1$. Then $w>0$ on $(a,b)$ and $w(a)=w(b)=0$.
Hence $w$ attains its (positive) maximum at some $x_*\in(a,b)$, and thus
\[
w'(x_*)=0,\qquad w''(x_*)\le 0.
\]
Since $s\in C^1$ and $v\in C^2$, we have $v''\in C^1$, and differentiating \eqref{eqn: semilinear elliptic} yields
\begin{equation}\label{eqn: semilinear w}
\gamma_0 w''+(s-2v)w=-vs' \qquad \text{in }(0,\ell).
\end{equation}
Moreover, $v''(x_*)=w'(x_*)=0$, so evaluating \eqref{eqn: semilinear elliptic} at $x=x_*$ gives
\[
0=\gamma_0 v''(x_*)=-v(x_*)\bigl(s(x_*)-v(x_*)\bigr),
\]
and since $v(x_*)>0$, we obtain $v(x_*)=s(x_*)$.

Substituting $v(x_*)=s(x_*)$ into \eqref{eqn: semilinear w}, we get
\[
\gamma_0 w''(x_*)=s(x_*)\bigl(w(x_*)-s'(x_*)\bigr).
\]
Because $w(x_*)>0$ and $s'(x_*)\le 0$ (as $s$ is decreasing on $(0,\ell)$), the right-hand side is strictly positive.
Thus $w''(x_*)>0$, contradicting $w''(x_*)\le 0$.
Therefore, $E$ must be empty, and hence $v'(x)=w(x)\le 0$ for all $x\in(0,\ell)$.

Since $v$ is even and nonincreasing on $[0,\ell]$, it attains its maximum at $x=0$.
\end{proof}

We prove a no-lingering result for small $\alpha$.

\begin{theorem} \label{thm:small alpha}For $0<\theta<1$, let $\gamma\in C^{2+\theta}([0,\infty))$. Assume that $s\in C^{1+\theta}([-\ell,\ell])$ is even and positive, decreasing on $(0,\ell)$. Let $\alpha,\mu\ge 0$. Let $u(x;\alpha)$ be a solution of \eqref{eqn: SteadyState}. If $s''(0)<0$, then there exists small $\alpha^*>0$ such that $u(0;\alpha)< s(0)$ for all $0<\alpha\le\alpha^*$. In addition, 
\begin{equation*}
    \int_{-\ell}^{\ell}u(x;\alpha)dx > \int_{-\ell}^{\ell}s(x)dx
\end{equation*} for such $\alpha$.
    
\end{theorem}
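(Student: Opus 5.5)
We argue by perturbation from $\alpha=0$. At $\alpha=0$ the memory variable vanishes, $m\equiv 0$. The steady-state problem \eqref{eqn: SteadyState} then reduces to \eqref{eqn: semilinear elliptic} with $\gamma_0=\gamma(0)$. By Lemma~\ref{lem:BO}, this problem has a unique positive solution $v_0$, since $u\mapsto s-u$ is strictly decreasing. By Lemma~\ref{lem: even SS}, $v_0$ is even and nonincreasing on $[0,\ell]$, with maximum at $x=0$.

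\textbf{Step 1: $v_0(0)<s(0)$.} Since $v_0$ is maximal at $0$, we have $v_0''(0)\le 0$, hence $v_0(0)\le s(0)$ from the equation. Suppose equality held. Then $v_0''(0)=0$. Differentiating twice gives
\[
\gamma_0 v_0''''(0)=-\bigl(v_0(s-v_0)\bigr)''(0)=-v_0(0)\,s''(0)>0,
\]
using $v_0'(0)=0$, $s'(0)=0$ and $v_0(0)=s(0)$. Because $v_0'''(0)=0$ by evenness, $v_0$ would then have a strict local minimum at $0$. This contradicts maximality unless $v_0$ is constant near $0$, and constancy would force $s\equiv v_0$ near $0$, contradicting $s''(0)<0$.

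If $v_0$ is only $C^{2+\theta}$, we replace this fourth-derivative argument with a comparison. Set $w=v_0-s$. Near $0$, $w$ satisfies an inequality whose sign, via $-s''>0$, forbids an interior maximum of $v_0$ equal to $s(0)$.

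\textbf{Step 2: $\int v_0>\int s$.} We integrate the equation and use the Neumann condition to get $\int v_0(s-v_0)=0$. Hence
\[
\int (v_0-s)^2=\int s(s-v_0)-\int v_0(s-v_0)=\int s(s-v_0).
\]
We also have $\int (s-v_0)\,v_0 = 0$. This gives
\[
\int (v_0 - s)\,dx=\int \frac{(s-v_0)^2}{v_0}\,dx\ge 0,
\]
using the identity $\int v_0^{-1}\cdot\gamma_0 v_0''=\gamma_0\int (v_0')^2/v_0^2$, i.e.\ dividing the equation by $v_0$ and integrating: $\int (s-v_0)=-\gamma_0\int (v_0')^2/v_0^2$. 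That gives $\int v_0-\int s=\gamma_0\int |v_0'|^2/v_0^2$. This quantity is strictly positive, since $v_0$ is nonconstant: a constant $v_0$ would force $s$ to be constant.

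\textbf{Step 3: continuity in $\alpha$.} We need $u(\cdot;\alpha)\to v_0$ uniformly as $\alpha\to 0$. Since $0\le m\le \alpha\|\bar s\|_\infty$, we have $\gamma(m)\to\gamma(0)$ uniformly. Writing $U=\gamma(m)u$, the bounds from the existence proof (sub- and supersolutions, $W^{2,p}$ and Schauder estimates) give uniform $C^{2+\theta}$ bounds on $U$. Any limit along $\alpha\to0$ is a positive solution with $m=0$, hence equals $v_0$ by uniqueness. So $u(\cdot;\alpha)\to v_0$ in $C^0$, and both strict inequalities from Steps 1 and 2 persist for $\alpha\le\alpha^*$.

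The main obstacle is Step 3's compactness plus identification of the limit. In particular, positivity of the limit has to be kept via the uniform subsolution $\varepsilon$. The rest is elementary, with Step 1 requiring care about regularity at $x=0$.
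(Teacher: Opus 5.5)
Your proof is correct. Step 1 is essentially the paper's argument, while Steps 2 and 3 take a different route. In Step 1 the paper sets $z=u_0-s$ and uses $z''(0)=-s''(0)>0$, so that $u_0>s$ and $u_0''>0$ near $0$, contradicting monotonicity. This is the same information as your $\gamma_0 v_0''''(0)=-v_0(0)s''(0)>0$.

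In Step 2 the paper simply cites Lou's Theorem 1.2 at $\alpha=0$. You instead prove it directly from $\int v_0-\int s=\gamma_0\int |v_0'|^2/v_0^2$, which makes the argument self-contained.

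In Step 3 the paper applies the implicit function theorem at $(0,u_0)$. It uses the nondegeneracy of $u_0$ and the $C^{2+\theta}$ regularity of $\gamma$ to obtain a $C^1$ branch $\alpha\mapsto u_\alpha$ converging in $C^{2+\theta}$. This is more quantitative, but identifying the given solution $u(\cdot;\alpha)$ with that branch tacitly uses uniqueness of the positive steady state for small $\alpha>0$. Your compactness argument needs only continuity of $\gamma$, and it needs neither nondegeneracy nor uniqueness for $\alpha>0$, so it applies to any family of positive solutions. The cost is that it gives no rate or smooth dependence, which the statement does not require anyway.

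Three small repairs are needed.
\begin{enumerate}
\item In Step 2, delete the display $\int(v_0-s)=\int(s-v_0)^2/v_0$. It is not an identity: the two sides differ by $\int s^2/v_0-\int s$, which is nonzero in general. The correct identity you derive right after it is all you need.
\item In Step 1, the fallback for the case ``$v_0$ only $C^{2+\theta}$'' is unnecessary. The equation bootstraps $v_0\in C^3$, and $v_0''''(0)$ exists because $s''(0)$ does. Taylor's formula with Peano remainder then gives the strict local minimum.
\item In Step 3, the existence of the subsolution $\varepsilon$ does not by itself place an arbitrary solution above $\varepsilon$. Argue instead at a minimum point $x_1$ of $U=\gamma(m)u$. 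There the Neumann condition gives $U''(x_1)\ge 0$, so $u(x_1)\ge s(x_1)$, and therefore $U\ge \gamma(\alpha\|\bar s\|_\infty)\min s$. The same argument at a maximum point gives $U\le \gamma(0)\|s\|_\infty$. Hence $\|U''\|_\infty$ is bounded uniformly for $\alpha\le 1$, and Arzel\`a--Ascoli yields $C^1$ compactness without any Schauder estimate.
\end{enumerate}
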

\begin{proof}
Suppose that $s''(0)<0$.
    Let $\alpha =0$ and denote the corresponding positive solution $u_0$, that is, 
    \begin{equation}\label{eqn: zeroalpha}
    \left\{\begin{aligned}
        &0 = \gamma(0)u_{0}'' + u_{0} (s - u_{0})\\
        & u_{0}'(\pm \ell) =0.
    \end{aligned}\right.
\end{equation} Then $u_{0}$ is even, positive, and decreasing in $(0,\ell)$ by Lemma \ref{lem: even SS}. Since $u_{0}$ has the maximum at $x=0$, 
\[u_{0}(0)(s(0)-u_{0}(0)) = -\gamma(0)u_{0}''(0)\ge 0 \implies u_{0}(0)\le s(0) .\] If $u_0(0)=s(0)$, then $u_0''(0)=0$. 
Let $z=u_0-s$. 
Then $z$ satisfies $$\gamma_0z''-u_0z=-\gamma_0s'' \text{ and } z(0)=0,\,z'(0)=0,$$ hence $z''(0) = -s''(0)>0$. We obtain that there exists a small $\delta>0$ such that $z(x)>0$ for all $x\in(0,\delta)$, hence $u_0(x)>s(x)$ for all $x\in(0,\delta)$. From \eqref{eqn: zeroalpha}, for $x\in(0,\delta)$, 
$$u_0''(x) = \frac{u_0(x)}{\gamma_0}(u_0(x)-s(x))>0.$$ However, since $u_0'(0)=0$ and $u_0$ is decreasing, for $x\in (0,\delta)$, $$u''(\xi) = \frac{u_0'(x)-u_0'(0)}{x-0}=\frac{u_0'(x)}{x}\le 0 \quad \text{ for some } \xi\in(0,x).$$ This is a contradiction. Therefore, $u_0(0)<s(0)$.
 The last conclusion for $\alpha=0$ follows from \cite[Theorem 1.2]{Lou2006}. 
 
Now it suffices to show that for sufficiently small $\alpha$, $u_\alpha\to u_0$ in $C^{2+\theta}(\overline{\Omega})$ as $\alpha\to 0$. For $\alpha>0$, define an operator
\begin{equation*}
    F(\alpha,v) = \begin{pmatrix}
        F_1(\alpha,v)\\F_2(\alpha,v)\\F_3(\alpha,v)
    \end{pmatrix}=\begin{pmatrix}
        (\gamma(\phi)v)''+v(s-v)\\(\gamma(\phi)v)'(-\ell)\\(\gamma(\phi)v)'(\ell)
    \end{pmatrix} 
\end{equation*} mapping $\mathbb{R}^+\cup\{0\}\times C^{2+\theta}([-\ell,\ell])\to C^\theta([-\ell,\ell])\times \mathbb R\times \mathbb R$, where $\phi=\alpha\bar s \frac{v}{v+\mu}$. Consider the Frechet derivative of $F$ with respect to $v$ at $(0,u_0)$
\begin{equation*}
    D_vF(0,u_0)(v)=\begin{pmatrix}
        \gamma(0)v''+(s-2u_0)v\\ \gamma(0)v'(-\ell)\\ \gamma(0)v'(\ell)
    \end{pmatrix} 
\end{equation*} 
Standard elliptic theory (\cite{Cantrell1989,GilbargTrudinger,Lou2006}) implies that $D_vF(0,u_0)$ is a bounded Fredholm operator of index zero from $C^{2+\theta}([-\ell,\ell])\to C^\theta([-\ell,\ell])\times \mathbb R\times \mathbb R$. Moreover, any positive solution $u_0$ of \eqref{eqn: zeroalpha} is nondegenerate, that is, \begin{equation*}
   L\psi:= \gamma(0)\psi''+(s-2u_0)\psi=0, \quad \psi'(\pm\ell)=0
\end{equation*} admits only the trivial solution $\psi\equiv 0$ (i.e. $0\notin \sigma (L)$). Thus, $D_vF(0,u_0)$ is an isomorphism. Therefore, by the inverse function theorem, there exists $\alpha^*>0$ and a $C^1$ branch $\alpha\mapsto u_\alpha$ such that
$F(\alpha,u_\alpha)=0$ for $0\le \alpha<\alpha^*$ and $u_\alpha\to u_0$ in $C^{2+\theta}([-\ell,\ell])$ as $\alpha\to 0$.
\end{proof}

\begin{figure}[ht]
    \centering
    \includegraphics[width=0.4\linewidth]{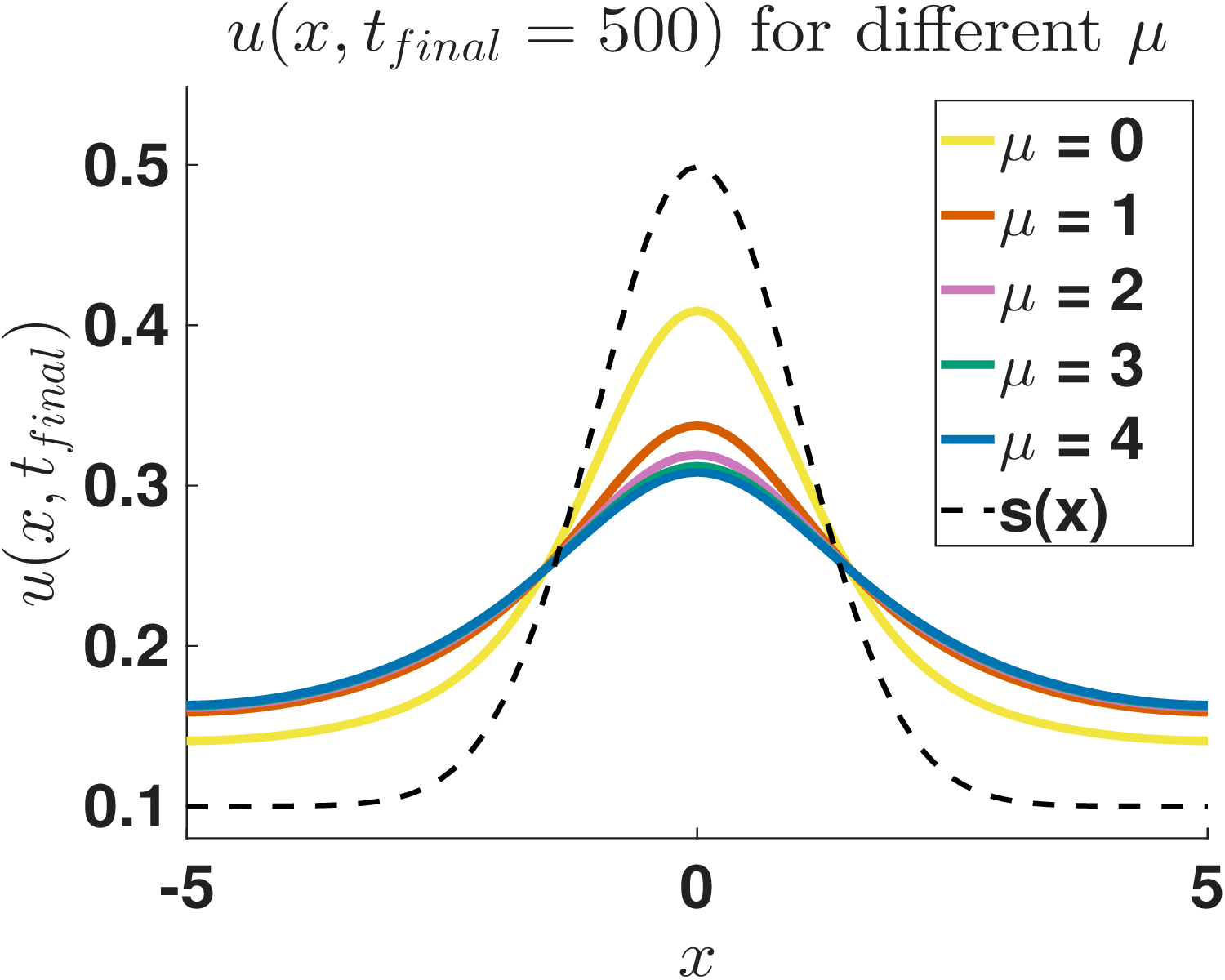}~
    \includegraphics[width=0.4\linewidth]{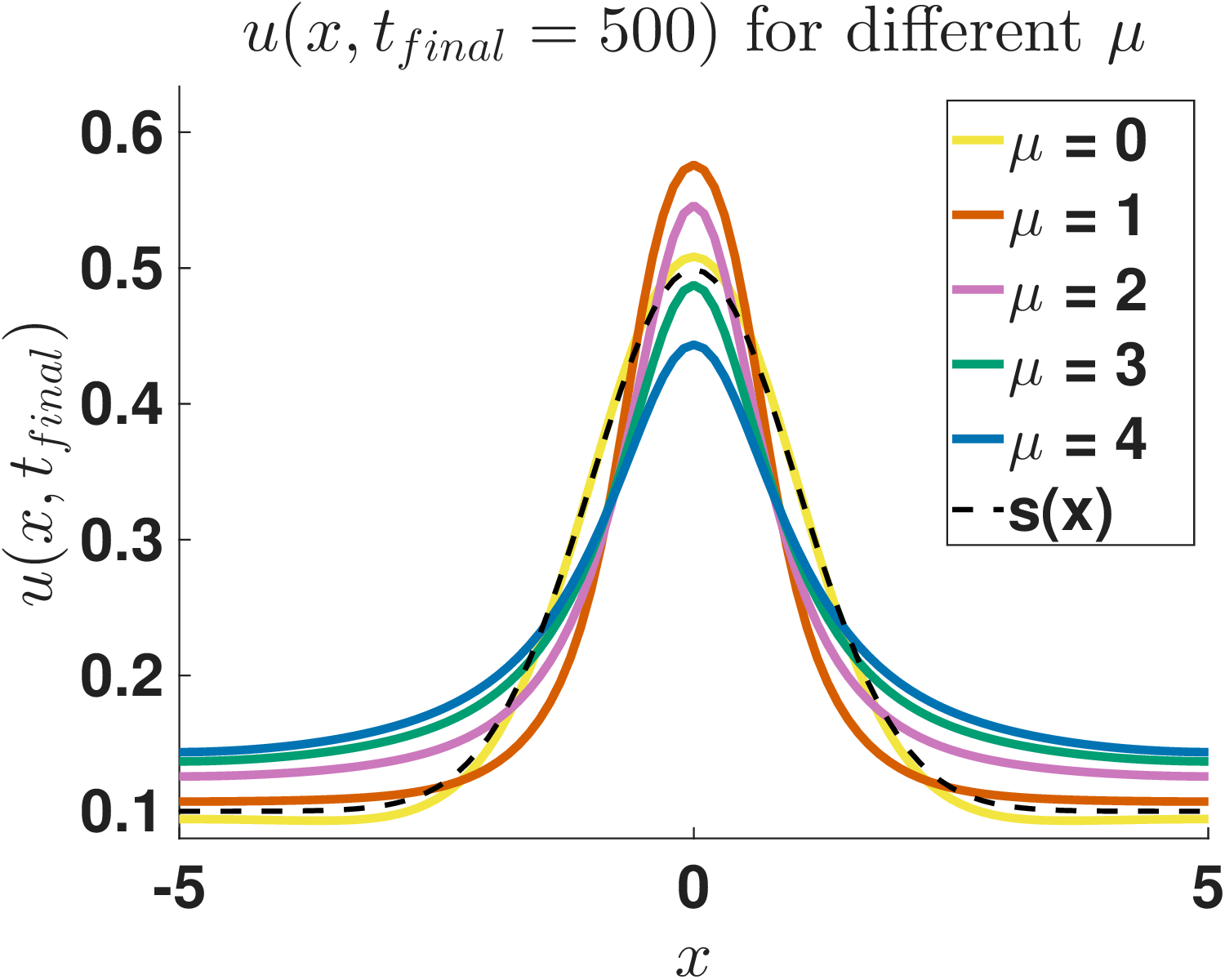}
    \caption{Snapshots of numerical solutions $u(x,t_{\mathrm{final}}=500)$ to the logistic model \eqref{eqn:logistic} with different values of the forgetting rate $\mu$ and the unimodal resource distribution $s(x)=\frac{1}{\sqrt{2\pi}}e^{-x^{2}/2}+0.1$. 
The dashed curve shows $s(x)$. 
The left panel corresponds to the learning rate $\alpha=1$ and the right panel to $\alpha=10$. 
For the larger learning rate the population exhibits stronger concentration near the resource peak for intermediate values of $\mu$, whereas very large $\mu$ leads to flatter profiles.
}
    \label{fig:lingering_logistic}
\end{figure}
Theorem~\ref{thm:small alpha} shows that, for sufficiently small learning rate $\alpha$, the population profile at the best resource location remains strictly below the carrying capacity $s(0)$. The diffusion driven by the cognitive map still smooths out the density near $x=0$. The numerical experiments in Figure~\ref{fig:lingering_logistic} show that, when $\alpha$ is large, the dependence of $u$ on the forgetting rate $\mu$ is non-monotonic, as in the purely movement-based model of Section~\ref{sec:lingering from dispersal}. Intermediate values of $\mu$ produce the strongest concentration near the resource peak, whereas very small or very large $\mu$ lead to flatter profiles. Thus the lingering phenomenon appears to persist when logistic growth is included, even though our current analytical results cover only the small-$\alpha$ regime.

\subsection{Memory regimes and population size trade-offs}\label{sec:comparison}
When the random diffusive--logistic model \eqref{eqn:constantgamma} is considered, that is, when $\gamma(z)\equiv \gamma>0$ for some positive constant $\gamma$, it is well known that if $s(x)$ is non-constant, bounded, measurable, and nonnegative for all $x\in \overline\Omega$, then the positive steady-state solution $u$ satisfies
\begin{equation}\label{totalu>totals}
    \int_{\Omega} u(x)\,dx > \int_\Omega s(x)\,dx
\end{equation}
for all $\gamma>0$ \cite[Theorem 1.2]{Lou2006}. 
 When it comes to our cognitive movement model \eqref{eqn:logistic}, the inequality \eqref{totalu>totals} holds for sufficiently small $\alpha$ as we have seen in Theorem \ref{thm:small alpha}. However, the inequality does not necessarily hold for large $\alpha$.

In this subsection, we quantify how memory parameters shape both lingering and population size through numerical simulations. 
Let $\Omega=[-5,5]$. 
We consider the unimodal function $s(x)$ given by
\[
s(x) = \frac{1}{\sqrt{2\pi}}e^{-5x^2}+0.1,
\]
which has total mass $\int_{-5}^5 s(x)\,dx \approx 1.3162$. 
We choose the initial condition for the following simulations to be $u_0=s$.
We measure lingering strength by the maximum value $\max_{x\in\overline\Omega} u(x,t_{\mathrm{final}})$. 
We also measure population performance by the total mass $\int_{-5}^{5} u(x,t_{\mathrm{final}})\,dx$. 
Our goal is to understand how these quantities depend on the learning rate $\alpha$ and the forgetting rate $\mu$ and to identify memory regimes that produce strong lingering but also support a large total population. 
To illustrate these effects, we consider representative choices of $\alpha$ and $\mu$ and compare the resulting peak density and total mass. 

\begin{figure}[ht]
    \centering
    
    \includegraphics[width=0.33\linewidth]{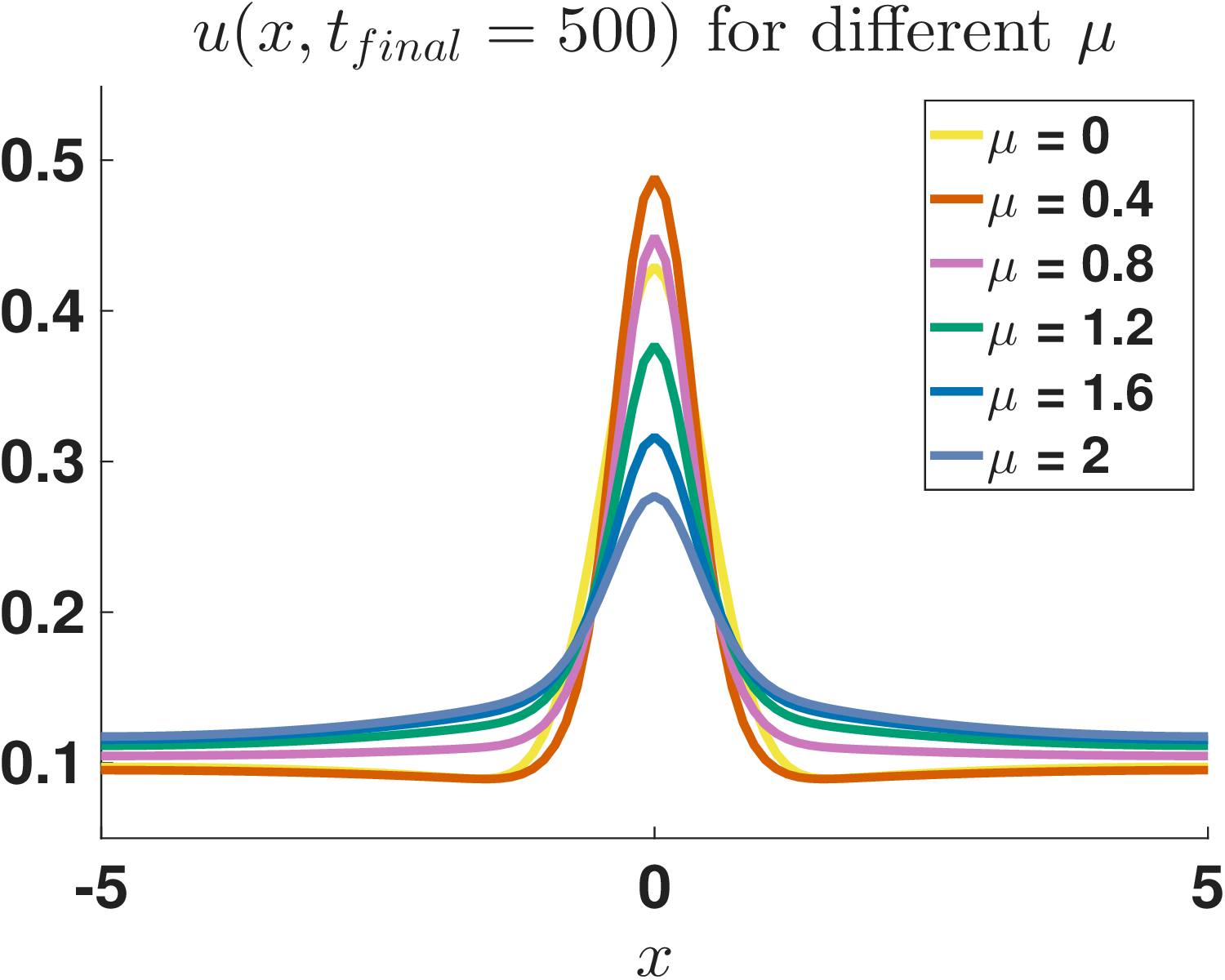}~\includegraphics[width=0.33\linewidth]{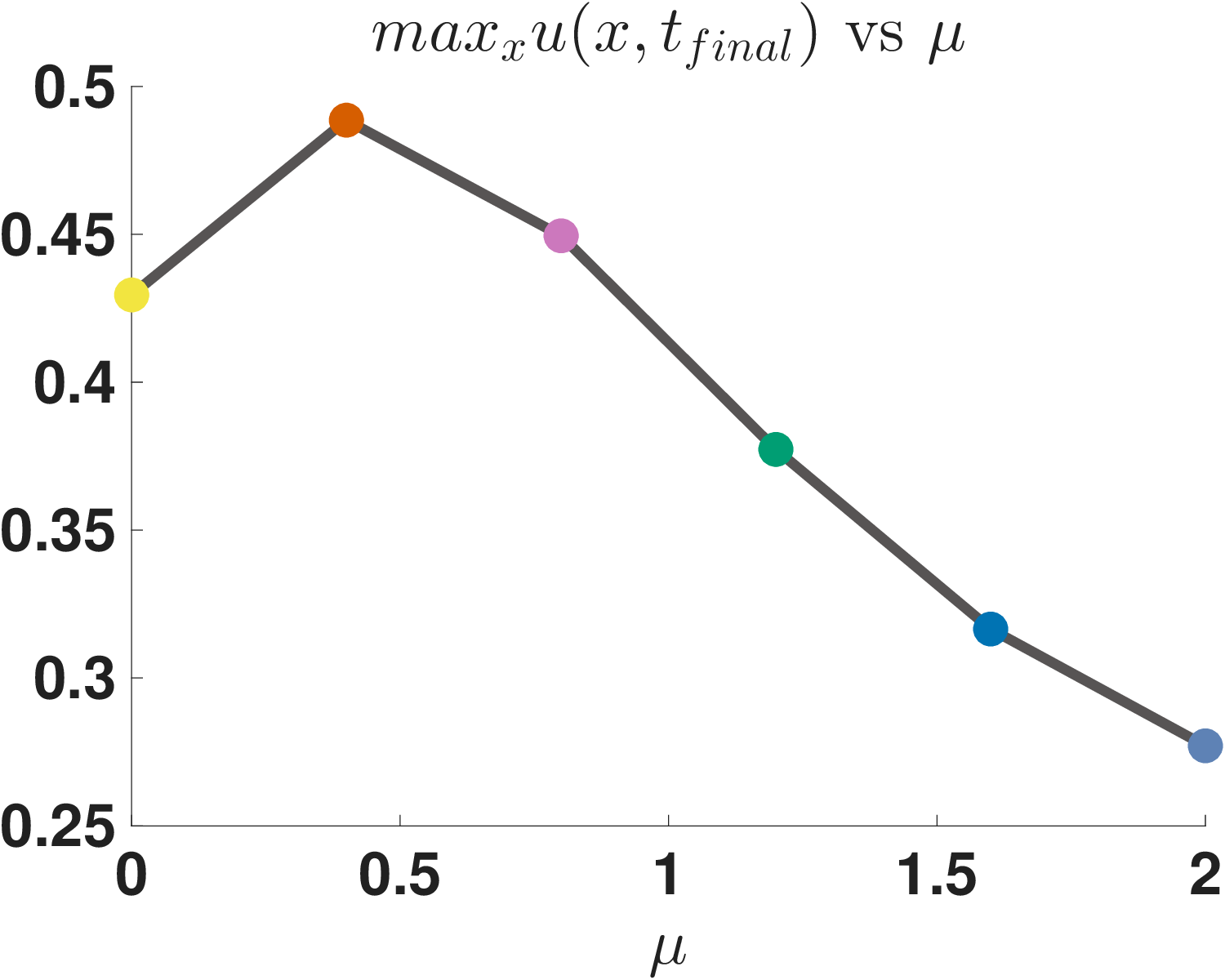}~\includegraphics[width=0.33\linewidth]{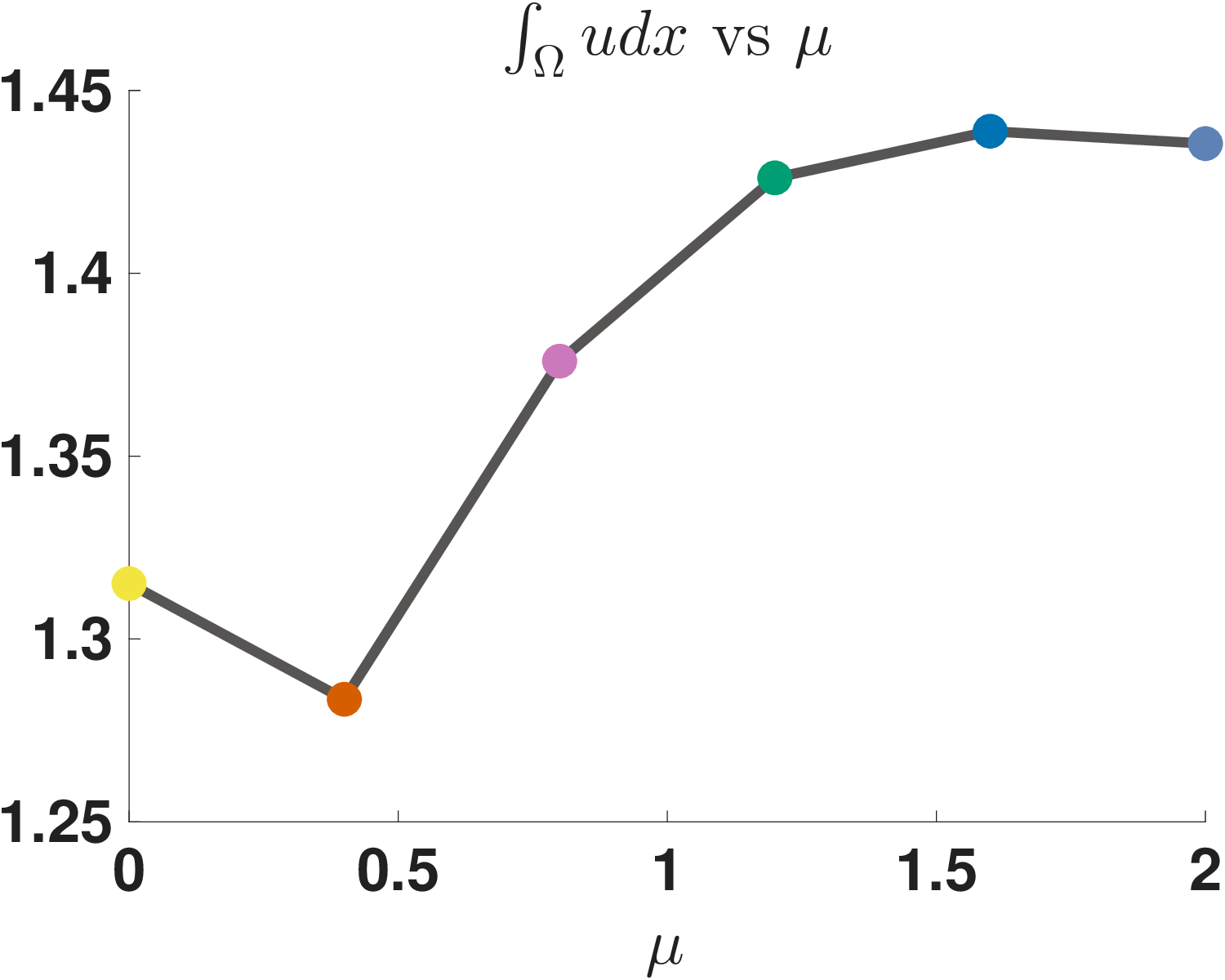}\\
    
    \vspace{0.2cm}
    
    \includegraphics[width=0.33\linewidth]{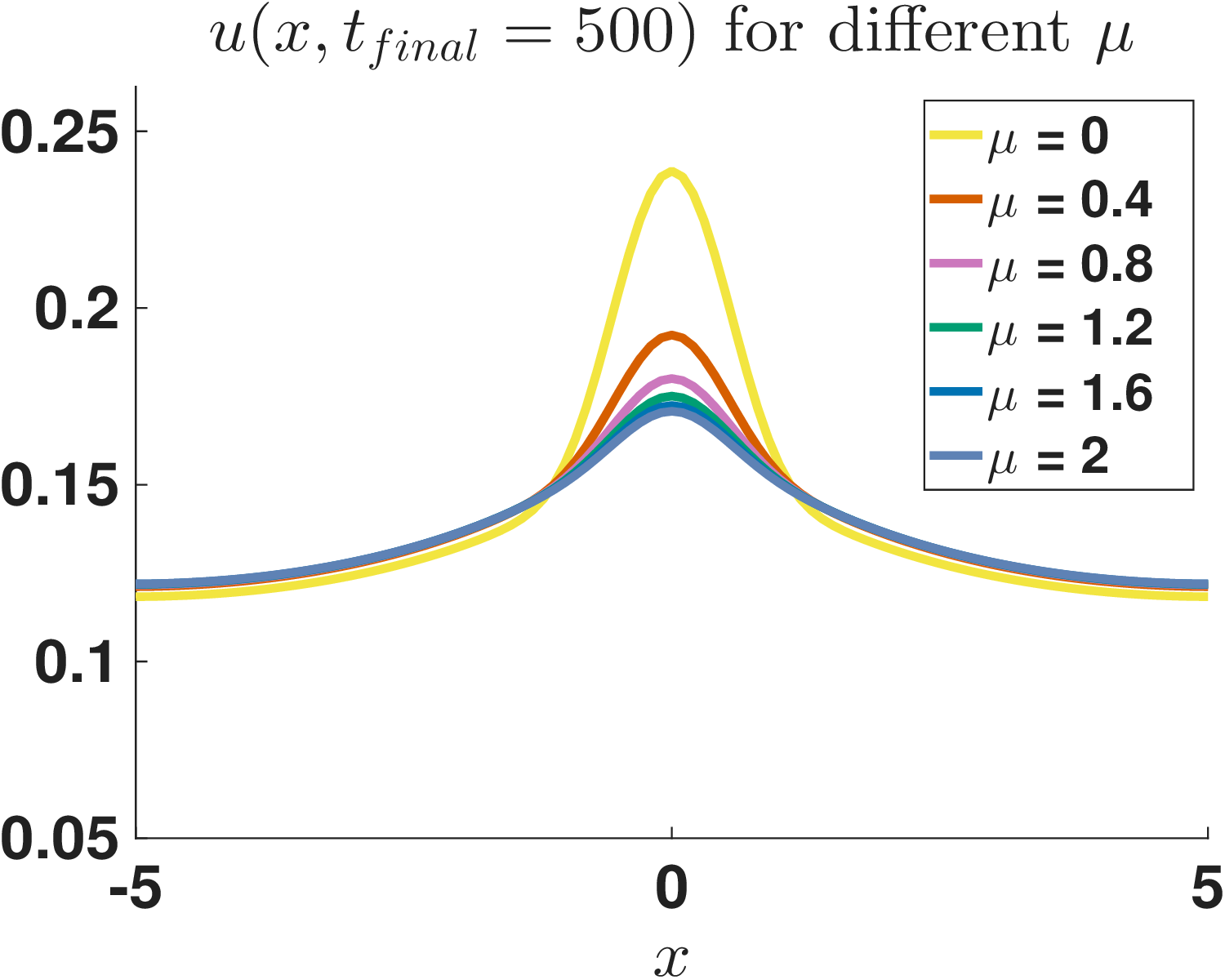}~\includegraphics[width=0.33\linewidth]{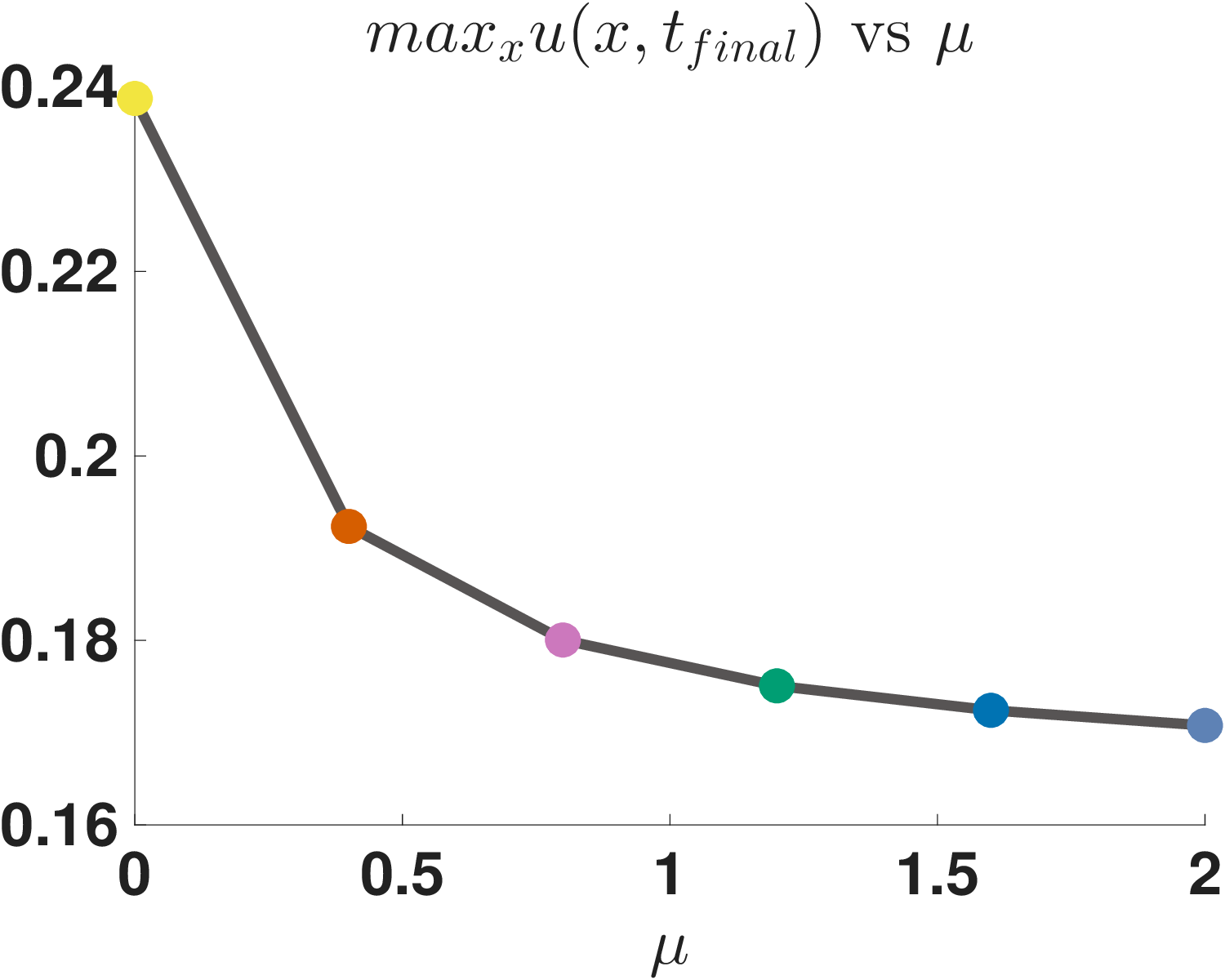}~\includegraphics[width=0.33\linewidth]{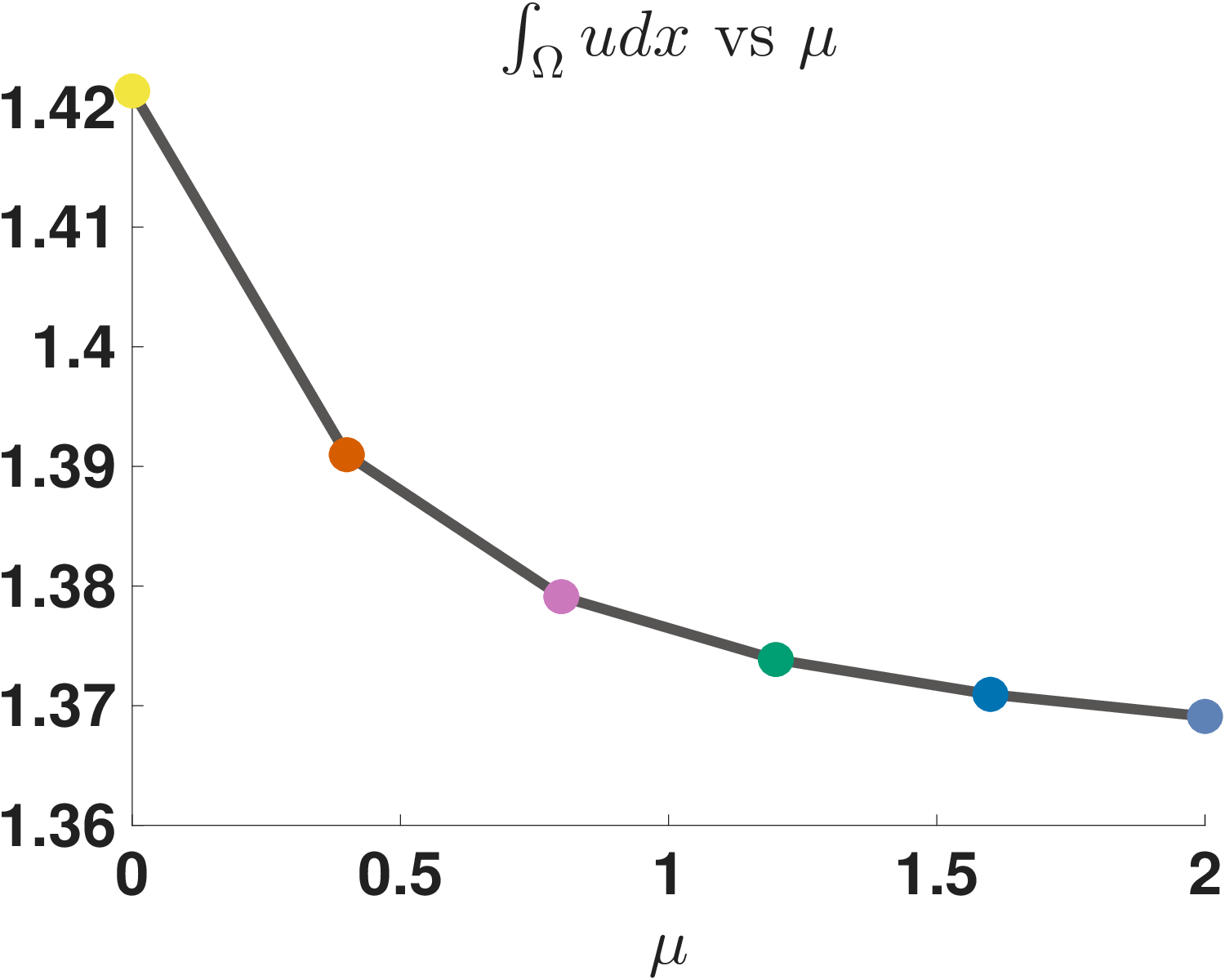}
    
    \caption{Numerical simulation results for the logistic growth model \eqref{eqn:logistic} with $\Omega=[-5,5]$ and the unimodal resource distribution $s(x)=\frac{1}{\sqrt{2\pi}}e^{-5x^2}+0.1$, whose total mass is $\int_{-5}^5 s(x)\,dx \approx 1.3162$. The forgetting rate $\mu$ varies and $t_{\mathrm{final}}=500$.  Left column: spatial profiles $u(x,t_{\mathrm{final}})$ for different $\mu$.  Middle column: peak density $\max_{x\in\Omega}u(x,t_{\mathrm{final}})$ as a function of $\mu$.  Right column: total population $\int_{-5}^5 u(x,t_{\mathrm{final}})\,dx$ as a function of $\mu$.  The first row corresponds to a large learning rate $\alpha=10$ and the second row to $\alpha=1$. For $\alpha=10$ the peak density is maximal near $\mu\approx 0.4$ whereas the total population is maximal near $\mu\approx 1.6$, so the dependence of $\int_{-5}^5 u(x,t_{\mathrm{final}})\,dx$ on $\mu$ is non-monotone and the inequality $\int_{-5}^5 u(x,t_{\mathrm{final}})\,dx<\int_{-5}^5 s(x)\,dx$ occurs only near the strongest lingering regime.}
    \label{fig:totalu}
\end{figure}
In Figure~\ref{fig:totalu}, we numerically observe that memory parameters control both how sharply the population lingers near the resource peak and how large the total population can grow. 
For a large learning rate $\alpha=10$, our simulations show that the strongest lingering occurs at $\mu\approx 0.4$, whereas the total mass is minimized at $\mu\approx 0.4$. The total mass is maximized near $\mu\approx 1.6$. If $\mu$ is even larger, total mass decreases. (See Figure~\ref{fig:comparison large mu}.)
Thus, the memory regime that maximizes lingering does not maximize population size. 
Instead, it actually reduces population size once linear growth and intraspecific competition (logistic growth) are taken into account. 

 Our simulations in Figure \ref{fig:totalu} show that
\[
\int_{-5}^5 u(x,t_{\mathrm{final}})\,dx < \int_{-5}^5 s(x)\,dx
\]
occurs only in a narrow neighbourhood of the values of the forgetting rate $\mu$ that produce the strongest lingering. 
This behaviour is also studied from recent work on resource-dependent dispersal, where non-constant dispersal rates can lead to a total population smaller than the resource distribution when the motility function $\gamma$ decays sufficiently fast \cite{Tang2023}. 
In our model, the possibility of the reverse inequality to \eqref{totalu>totals} arises from how quickly individuals learn and forget, that is, from the combined effect of the parameters $\alpha$ and $\mu$. Consequently, lingering is not a beneficial strategy for populations.

\begin{figure}[ht]
    \centering
    \includegraphics[width=0.4\linewidth]{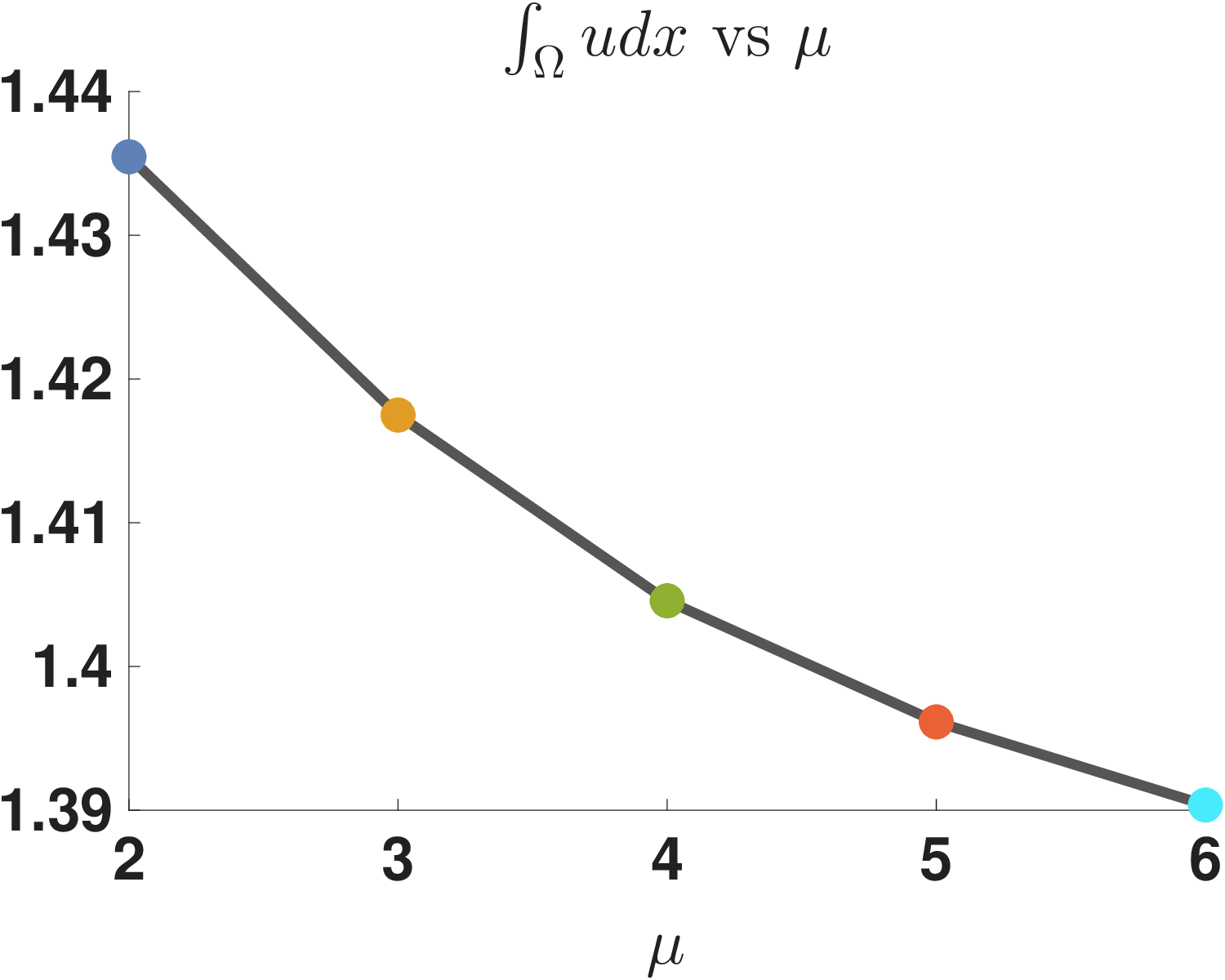}
    \caption{The total mass $\int_{-5}^5udx$ vs $\mu=2,3,4,5,$ and $6$ when $\alpha=10$.}
    \label{fig:comparison large mu}
\end{figure}

\section{Discussion}\label{sec:discussion}
In this work, we propose a population model with the Fokker--Planck type diffusion in which movement depends on a cognitive map $m(x,t)$  constructed from nonlocal perception. We observed that nonlocal perception can cause  spatial heterogeneity in cognitive maps and in the population density, particularly near the boundary of the domain. We identified a lingering phenomenon on heterogeneous cognitive maps, established the existence and stability of steady states for the logistic version of the model, and used simulations to show that lingering arises even in the presence of population growth.

Our results suggest that once the cognitive map is spatially non-uniform, high learning and moderate forgetting rates can create long-term concentration of individuals near the highest locations of the environment, even when the underlying resource landscape is nearly homogeneous. The cognitive diffusion $\Delta(\gamma(m)u)$ with a cognitive map $m$ effectively reduces movement in regions that are strongly represented in the cognitive map and thus generates lingering around high-valued memory peaks. When logistic growth is included, the learning rate $\alpha$ and the forgetting rate $\mu$ control a trade-off between strong lingering and total population size. For large $\alpha$, we found that an intermediate forgetting rate maximizes the peak density at the resource, whereas a larger $\mu$ maximises the total mass. Very strong memory traps individuals in overcrowded patches, so the regime that maximizes lingering does not maximize population size. Hence the best strategy for population size is fast learning and reasonably fast forgetting, as to be able to adapt to a heterogeneous environment.

For constant motility $\gamma(z)\equiv \gamma$, our model reduces to the classical random diffusive--logistic equation, for which the total mass of the steady state always exceeds the total resource \cite{Lou2006}. Our simulations show that this property can fail under cognitive diffusion: near the strongest-lingering regime, the total mass may drop below that of the landscape $s$. This reduction is also investigated in recent studies of resource-dependent dispersal \cite{Tang2023}, where the fast decaying function $\gamma(z)$ and local information can also lead to a reduction in total population size. In this sense, our work extends the theory of optimal population size from resource-dependent diffusion to memory-dependent diffusion. 

This study has several mathematical limitations. 
For the full model \eqref{eqn:logistic} we proved local well-posedness but we did not establish global-in-time existence of solutions. 
The lack of diffusion in the $m$-equation and the strong coupling in the full system make a direct global existence theory difficult, and this remains an open problem. 
In addition, our rigorous results on the lingering phenomenon concern the model without logistic growth and in the logistic case we rely on numerical simulations. 
For the logistic model, we obtained partial results on lingering and total population size, mainly in the regime of small learning rates $\alpha$ and a complete description for general $(\alpha,\mu)$ is left for future work.

A natural question that arises from this work is an analysis of evolutionary benefits of learning and forgetting. We saw that fast learners and moderate forgetters do best in a heterogeneous environment. Hence next we can look at a competition model where species with different cognitive abilities compete for resources. We expect some interesting insights from such a study, and we leave it for future research.

\color{black}

\section*{Acknowledgments}
The authors thank Hao Wang for discussion at an early stage of this work. TH is supported through a discovery grant of the Natural Science and Engineering Research Council of Canada (NSERC), RGPIN-2023-04269.

\section*{Conflict of Interest Statement}
The authors declare no conflicts of interest.

\section*{Data Availability Statement}
No data was used in this study.

\bibliographystyle{plain}  
\bibliography{references}  

\end{document}